\documentclass[english]{smfart}
\usepackage{nearby-stokes}
\makeatletter
\let\oldsmf@enddoc\smf@enddoc
\renewcommand{\smf@enddoc}{\enlargethispage{3\baselineskip}\oldsmf@enddoc}
\makeatother
\begin{document}
\frontmatter
\title{Moderate and rapid-decay nearby cycles for holonomic $\mathcal D$-modules}

\author[C.~Sabbah]{Claude Sabbah}
\address{Centre de Mathématiques Laurent Schwartz (CMLS), CNRS, École polytechnique, Institut Polytechnique de Paris, Palaiseau, France}
\email{Claude.Sabbah@polytechnique.edu}
\urladdr{https://perso.pages.math.cnrs.fr/users/claude.sabbah}

\subjclass{14F10, 32S40}

\keywords{Holonomic D-module, nearby cycles, de~Rham complexes with growth conditions, nearby cycles with growth conditions}

\begin{abstract}
We introduce the notion of moderate and rapid decay nearby cycles relative to a holomorphic function $f$ for an arbitrary holonomic $\mathcal{D}$-module. They are proved to be $\mathbb{R}$-constructible complexes on the product of the special fiber of the function and the circle $S^1$ parametrizing the values of $f/|f|$. Duality properties are proved by B.\,Hepler and A.\,Hohl \cite{H-H25}, and we also prove them in special cases. Relations with the irregularity complexes as defined by Z.\,Mebkhout are given. Most of the arguments rely on the results of T.\,Mochizuki \cite{Mochizuki10}.
\end{abstract}

\maketitle
\tableofcontents
\mainmatter

\section{Introduction}\label{sec:intro}
Let $X$ be a complex manifold and let $f:X\to\CC$ be a holomorphic function. The construction by Grothendieck and Deligne of the nearby cycle functor \cite{Deligne73} associates to each sheaf $\cF$ of modules over a ring $A$, or each bounded complex of such sheaves, a~complex~$\psi_f\cF$ supported on the fiber $X_0:=f^{-1}(0)$, equipped with an automorphism~$\rT$, called the monodromy operator. In order to remain in the category of perverse sheaves when~$\cF$ is so and $A$ is a field, we consider instead the shifted complex $\ppsi_f\cF=\psi_f\cF[-1]$.

On the other hand, starting from a holonomic (left) $\cD_X$-module $\cM$, the theory of the Kashiwara-Malgrange $V$\nobreakdash-filtration enables one to define a holonomic $\cD_X$-module supported on~$X_0$, denoted here by $\psif\cM$, equipped with an automorphism $\rT$. Applying the de~Rham functor (with the shift by the dimension, as is usual in the theory of $\cD$-modules), we~obtain, according to a celebrated theorem of Kashiwara, a perverse complex with an automorphism $(\pDR\psif\cM,\rT)$.

If $\cM$ has \emph{regular singularities} along $X_0$, a theorem of Kashiwara and Malgrange identifies $(\pDR\psif\cM,\rT)$ with $(\ppsif\pDR\cM,\rT)$, constructed as in \cite{Deligne73} over the field~$\CC$. For a general holonomic $\cM$, the two complexes may differ.

If $X=\CC$, $f=\id$, and $\cM$ is $\cD_X$-holonomic with arbitrary singularities, the complexes $\pDR\psif\cM$ and $\ppsif\pDR\cM$ are finite-dimensional vector spaces (in degree zero) with an automorphism $\rT$, that one can equivalently regard as local systems of finite rank on the circle $S^1$. The relation between the two local systems is obtained by introducing the subsheaves of the latter consisting of sectorial germs of horizontal sections of~$\cM$ having moderate growth/rapid decay on $\partial\wt X(f)$. One recovers the former as the quotient sheaf of these two sheaves.

Our aim is to perform a similar construction in arbitrary dimension and for an~arbitrary holomorphic function $f$. To this end, we~introduce the following objects and notations.

Recall that we~set $X_0=f^{-1}(0)$. We~let $\varpif:\wt X(f)\to X$ be the real blowing-up of $f^{-1}(0)$. It~is a complex manifold with (a possibly singular) boundary. More precisely, one has $\partial\wt X(f):=\varpif^{-1}(X_0)\simeq \XOS$ (\cf Section~\ref{subsec:realblowupf} below). The universal covering space $X_0\times\RR\to X_0\times S^1$ is denoted by $\rho_0$, and the composition $\varpi_f\circ\rho_0:X_0\times\RR\to X_0$ by~$q_0$. We~also consider the translation $\tau:(x,t)\mto(x,t+1)$ on $X_0\times\RR$.

\subsubsection*{Topological nearby cycles}
Given a holonomic $\cD_X$-module $\cM$, we~will construct an $\RR$\nobreakdash-constructible complex $\ppsifstar\cM$ on $\partial\wt X(f)$, which we will compare with $(\ppsif\pDR\cM,\rT)$ in the following sense (\cf Section~\ref{subsec:pfq0-1}):
\begin{itemize}
\item
On the one hand, we~equip the pullback $q_0^{-1}\ppsif\pDR\cM$ to $X_0\times\RR$ with the isomorphism $\wh\rT:q_0^{-1}\ppsif\pDR\cM\isom\tau^{-1}q_0^{-1}\ppsif\pDR\cM$ by composing the automorphism $q_0^{-1}\rT$ with the identification $\tau^{-1}q_0^{-1}\ppsif\pDR\cM=q_0^{-1}\ppsif\pDR\cM$ (since $q_0\circ\tau=q_0$).
\item
On the other hand, the pullback $\rho_0^{-1}\ppsifstar\cM$ is naturally equipped with an isomorphism $\wh\rT:\rho_0^{-1}\ppsifstar\cM\isom\tau^{-1}\rho_0^{-1}\ppsifstar\cM$, since $\rho_0\circ\tau=\rho_0$.
\end{itemize}

\begin{proposition}\label{th:q0-1}
For a holonomic $\cD_X$-module $\cM$, or an object $\cM$ of the bounded derived category $\catD^\rb_\hol(\cD_X)$ of bounded complexes of $\cD_X$-modules with holonomic cohomology, there exists a functorial isomorphism between the objects $(q_0^{-1}\ppsif\pDR\cM,\wh\rT)$ and $(\rho_0^{-1}\ppsifstar\cM,\wh\rT)$. In particular, for any $x\in X_0$, the restriction of $\ppsifstar\cM$ to the circle $\{x\}\times S^1$ has locally constant cohomology.
\end{proposition}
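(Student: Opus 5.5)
The plan is to reduce the statement to a purely topological comparison in two steps. The first step identifies the paper's $\ppsifstar\cM$ with a sheaf‑theoretic object on $\partial\wt X(f)$; the second is a Milnor‑fibration argument on the universal cover. I~take $\ppsifstar\cM$ as constructed in the paper: the restriction to $\partial\wt X(f)$ of the de~Rham complex of $\varpif^{-1}\cM$, with coefficients in the sheaf $\cA^\star_{\wt X(f)}$ of holomorphic functions on $X^*:=X\setminus X_0$ with no growth condition, pushed forward to $\wt X(f)$, and shifted by $[-1]$. Since no growth condition is imposed, only the restriction $\cM_{|X^*}$ enters. By functoriality and the usual way‑out argument, it suffices to treat a single holonomic $\cM$.

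\emph{Step~1: replacing the coefficient sheaf.} I~will show that this de~Rham complex is quasi‑isomorphic to $\iota^{-1}R\wt\jmath_*\wt\jmath^{-1}\pDR\cM$, shifted by $[-1]$. Here $\wt\jmath:X^*\hto\wt X(f)$ and $\iota:\partial\wt X(f)\hto\wt X(f)$ are the inclusions. I~resolve $\cO_{X^*}$ by the Dolbeault complex $\cE^{(0,\cbbullet)}_{X^*}$. Its direct image $\wt\jmath_*\cE^{(0,\cbbullet)}_{X^*}$ consists of $c$‑soft (indeed fine) sheaves, so $\wt\jmath_*=R\wt\jmath_*$ on it. The de~Rham complex of $\cM$ with coefficients in $\wt\jmath_*\cE^{(0,\cbbullet)}$ is therefore $R\wt\jmath_*$ of the Dolbeault–de~Rham complex on $X^*$, which is $\pDR(\cM_{|X^*})$. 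The genuine issue is to check that the holomorphic‑coefficient version used in the paper computes the same thing, that is, that $\wt\jmath_*\cO_{X^*}\to\wt\jmath_*\cE^{(0,\cbbullet)}$ is a resolution. This is a Dolbeault lemma on the real blow‑up. Locally near a boundary point, $\wt X(f)$ has a basis of neighborhoods whose traces on $X^*$ are open sets of the form $\{x\in U : f(x)\neq0,\ \arg f(x)\in I\}$. Here $U$ is Stein and $I$ is an open interval of length less than $2\pi$. Each such set is Stein, being the preimage under $f$ of a simply connected sector (it is $\{\log f$ defined, with imaginary part in $I\}$). Hence the higher cohomology vanishes on a cofinal system, and the two versions agree. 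This is compatible with the identification $\rho_0\circ\tau=\rho_0$, so the isomorphism $\wh\rT$ on the paper's side becomes the tautological one on $\rho_0^{-1}\iota^{-1}R\wt\jmath_*\wt\jmath^{-1}\pDR\cM$.

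\emph{Step~2: the topological comparison.} Set $F:=\pDR\cM_{|X^*}$; by Kashiwara's theorem it is $\CC$‑constructible. Let $e:\wt{X^*}:=X^*\times_{\CC^*}\wt{\CC^*}\to X^*$ be the pullback of the universal covering $\exp:\wt{\CC^*}\to\CC^*$. Deligne's construction gives $\psi_fF=i^{-1}Rj_*Re_*e^{-1}F$, with $\rT$ induced by the deck transformation. For $(x,t)\in X_0\times\RR$, I~compare stalks and then build a global morphism. The stalk of $\rho_0^{-1}\iota^{-1}R\wt\jmath_*F$ at $(x,t)$ is $\varinjlim R\Gamma(B_\epsilon(x)\cap\{0<|f|<\eta,\ \arg f\in(2\pi t-\delta,2\pi t+\delta)\},F)$. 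The stalk of $q_0^{-1}\psi_fF$ is $R\Gamma$ of $F$ on the Milnor fiber, equivalently on $B_\epsilon(x)\cap\{0<|f|<\eta\}$ pulled back to the universal cover. The global morphism is the following. The sector over the interval around $2\pi t$ lifts canonically into $\wt{X^*}$ (choose the branch of $\log f$ whose imaginary part lies near $2\pi t$). Composing restriction to this lift with the natural map from $Re_*e^{-1}F$ gives a morphism $q_0^{-1}\psi_fF\to\rho_0^{-1}\iota^{-1}R\wt\jmath_*F$ on $X_0\times\RR$. By construction it intertwines $q_0^{-1}\rT$, composed with $\tau^{-1}q_0^{-1}=q_0^{-1}$, with the identification coming from $\rho_0\circ\tau=\rho_0$: moving $t$ to $t+1$ changes the branch by one deck transformation.

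\emph{Main obstacle.} The hard part is proving that this morphism is an isomorphism on stalks. I~would use the Milnor–Lê fibration for $f$ relative to a Whitney stratification adapted to $F$. For $0<\eta\ll\epsilon\ll1$, the map $f:B_\epsilon(x)\cap\{0<|f|<\eta\}\to D^*_\eta$ is a stratified topological locally trivial fibration. Its restriction over a sector is then homotopy equivalent, $F$‑compatibly, to the Milnor fiber times a contractible sector. Consequently both stalks are $R\Gamma(\text{Milnor fiber},F)$, and the map between them is the identity under these identifications. The last assertion of the statement follows immediately: $q_0^{-1}\ppsif\pDR\cM$ is constant along $\{x\}\times\RR$, hence $\rho_0^{-1}\ppsifstar\cM$ is too, so $\ppsifstar\cM_{|\{x\}\times S^1}$ has locally constant cohomology.
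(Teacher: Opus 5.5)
Your Step~2 proves the proposition, by a route different from the paper's. Step~1 is not needed, and as written it has a gap.

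\textbf{Step~1.} In the paper, $\ppsifstar\cM$ is not defined through $\cA^*_{\wt X(f)}=\wtjf{}_*\cO_{X^*}$. It is defined topologically, $\ppsifstar\cM:=\ppsifstar\pDR\cM=\wtif{}^{-1}\bR\wtjf{}_*\wtjf{}^{-1}\pDR\cM[-1]$ (see \eqref{eq:wtppsiM}), so the statement concerns only this complex. The comparison with the $\cA^*$-de~Rham complex is the separate Proposition~\ref{prop:*jf*}.

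If you keep Step~1, there is a problem with your sentence deriving that the de~Rham complex with coefficients in $\wt\jmath_*\cE^{(0,\cbbullet)}$ ``is therefore'' $\bR\wt\jmath_*$ of the Dolbeault--de~Rham complex. It silently uses $\wt\jmath_*\cE^{(0,q)}\otimes_{\varpif^{-1}\cO_X}\varpif^{-1}(\Omega^p_X\otimes\cM)\simeq\wt\jmath_*\bigl((\cE^{(0,q)}\otimes\Omega^p_X\otimes\cM)_{|X^*}\bigr)$. This holds when $\cM$ is $\cO_X$-coherent. A holonomic module, however, is only an inductive limit of coherent $\cO_X$-modules, and $\wt\jmath_*$ does not commute with such limits because $\wt\jmath$ is not proper.

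For instance, take $X=\CC^2$, $f=x_1$, $\cM=\cO_X(*\{x_2=0\})$. Let $\chi_k$ be a bump function supported in $2^{-k-1}<|x_1|<2^{-k}$. The section $\sum_k\chi_k(x_1)x_2^{-k}$ defines a germ of the right-hand side at every boundary point over the origin, but not of the left-hand side, since its pole order is unbounded. You would also need $\varpif^{-1}\cO_X$-flatness of the terms of your Dolbeault resolution in order to tensor it with $\cM$. The paper handles exactly this point by choosing a coherent filtration of $\cM$ and using that $F_p\DR_X\cM\to\DR_X\cM$ is a quasi-isomorphism for $p\gg0$. This reduces to complexes with $\cO_X$-coherent terms.

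\textbf{Step~2.} Your argument is sound and differs from the paper's proof of Proposition~\ref{prop:psipsitilde}. The paper proceeds in two stages.
First, it obtains $(\psif\cF,\rT)\simeq(\bR q_{0*}\rho_0^{-1}\psifstar\cF,\wt\rT)$ formally. It uses proper base change for $\varpif$ and base change along the covering $\rho$ (Lemma~\ref{lem:rhoopsitilde}, which relies on Property (P) of weakly $\RR$-constructible complexes).
Second, it shows that the adjunction $q_0^{-1}\bR q_{0*}\rho_0^{-1}\psifstar\cF\to\rho_0^{-1}\psifstar\cF$ is an isomorphism via Lemma~\ref{lem:ixloccst}. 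The hypothesis there is locally constant cohomology of finite rank on each circle $\{x\}\times S^1$. This is checked after proper pushforward and d\'evissage to $\cF=j_!\cL$ in a normal crossing situation, where $\varpi_{D,f}$ restricted to a fiber is the fibration $(S^1)^\ell\to S^1$.

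Your morphism, restriction to a lifted sector, is essentially this adjunction morphism made explicit. You instead prove it is an isomorphism by computing both stalks through the stratified Milnor--L\^e fibration. This bypasses resolution of singularities and the covering base change, and it yields local constancy along the circles as a consequence rather than as the key input. The price is L\^e's stratified fibration theorem, for a stratification adapted to $\pDR\cM$ and $f$, together with cofinality of Milnor tubes and sectors in the colimits. The paper's argument stays within the sheaf-theoretic d\'evissage used throughout.
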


\subsubsection*{Moderate and rapid-decay nearby cycles}
We refine the construction $\ppsifstar\cM$ by analogy with the one-dimensional case by~defining the complex of nearby cycles with moderate growth and that with rapid decay in the derived category $\catD^\rb_\Rc(\partial\wt X(f))$ of bounded complexes with $\RR$-constructible cohomology, denoted by $\wt\ppsif{}^\rmod\cM$ and $\wt\ppsif{}^\rrd\cM$ respectively, and we also obtain a complex $\wt\ppsif{}^\rmodrd\cM$ on $\partial\wt X(f)$. This construction can also be performed for $\cM$ in $\catD^\rb_\hol(\cD_X)$. We~obtain three distinguished triangles in the category $\catD^\rb_\Rc(\partial\wt X(f))$:
\begin{gather}
\wt\ppsif{}^\rmod\cM\to\ppsifstar\cM\to\wt\ppsif{}^\srmod\cM\To{+1},\label{eq:tridistmod}\\
\wt\ppsif{}^\rrd\cM\to\ppsifstar\cM\to\wt\ppsif{}^\srrd\cM\To{+1},\label{eq:tridistrd}\\
\wt\ppsif{}^\rrd\cM\to\wt\ppsif{}^\rmod\cM\to\wt\ppsif{}^\rmodrd\cM\To{+1},\label{eq:tridistmodrd}
\end{gather}
and by the octahedral axiom, a fourth one:
\begin{equation}
\wt\ppsif{}^\rmodrd\cM\to\wt\ppsif{}^\srrd\cM\to\wt\ppsif{}^\srmod\cM\To{+1}.
\end{equation}
If $\dim X=1$ and $\cM$ is a holonomic $\cD_X$-module, these triangles are in fact short exact sequences of sheaves (up to a shift) on $S^1$, as follows from the Hukuhara-Turrittin theorem (see \eg\cite[Th.\,1,\,p.\,205]{Malgrange91}) and the sheaves corresponding to $\ppsifstar\cM$ and $\wt\ppsif{}^\rmodrd\cM$ are local systems of finite-dimensional $\CC$\nobreakdash-vector spaces on $S^1$.

\subsubsection*{Formal nearby cycles}
By analogy to the case when $X$ is a curve, we~also prove in Section~\ref{subsec:pfq0-2}, for the complex $\wt\ppsif{}^\rmodrd\cM$ that fits into the distinguished triangle \eqref{eq:tridistmodrd}, a result analogous to that of Theorem~\ref{th:q0-1}.
\begin{itemize}
\item
We introduce the universal $\cD_X\lcr t\rcr\langle\partial_t\rangle$-module $\wh\Nils$ (corresponding to Nilsson class functions relative to the coordinate $t$ of $\CC$), which comes equipped with a monodromy operator $\rT$. We set
\[
\wh\cM^\nils:=\wh\Nils\otimes_{\cO_X\{t\}}\Dm\gamma_{f*}\cM|_{X\times\{0\}},
\]
where $\gamma_f:X\hto X\times\CC$ is the graph inclusion of $f$. Then the de~Rham complex $\pDR(\wh\cM^\nils)$, which is supported on $X_0\times\{0\}$ and which we simply regard as a complex on $X_0\times\{0\}$, is also equipped with such a monodromy operator; we then equip the pullback $q_0^{-1}\pDR (\wh\cM^\nils)$ to \hbox{$X_0\times\RR$} with the isomorphism $\wh\rT:q_0^{-1}\pDR(\wh\cM^\nils)\isom\tau^{-1}q_0^{-1}\pDR(\wh\cM^\nils)$ by composing the automorphism $q_0^{-1}\rT$ with the identification $\tau^{-1}q_0^{-1}\pDR(\wh\cM^\nils)=q_0^{-1}\pDR(\wh\cM^\nils)$.
\item
On the other hand, the pullback $\rho_0^{-1}\wt\ppsif{}^\rmodrd\cM$ is naturally equipped with an isomorphism $\wh\rT:\rho_0^{-1}\wt\ppsif{}^\rmodrd\cM\isom\tau^{-1}\rho_0^{-1}\wt\ppsif{}^\rmodrd\cM$, since $\rho_0\circ\tau=\rho_0$.
\end{itemize}

\begin{theoreme}\label{th:q0-2}
For a holonomic $\cD_X$-module $\cM$, or an object $\cM$ of the bounded derived category $\catD^\rb_\hol(\cD_X)$ of bounded complexes of $\cD_X$-modules with holonomic cohomology, there exists a functorial isomorphism between the objects $(q_0^{-1}\pDR(\wh\cM^\nils),\wh\rT)$ and $(\rho_0^{-1}\wt\ppsif{}^\rmodrd\cM,\wh\rT)$. In particular, for any $x\in X_0$, the restriction of $\wt\ppsif{}^\rmodrd\cM$ to the circle $\{x\}\times S^1$ has locally constant cohomology.
\end{theoreme}

\begin{remarque}
One might expect that $(\pDR(\wh\cM^\nils),\rT)\simeq(\pDR(\psi_f\cM),\rT)$ up to a shift. If $\cM$ has regular singularities, this statement is proved as an intermediate step in the comparison theorem for nearby cycles, due to Kashiwara and Malgrange (\cf\eg\cite[\S\S4.7--4.10]{M-S86b}). This expectation can be proved when $\dim X=1$, but we have not a proof in general.
\end{remarque}

\subsubsection*{The irregularity complex}
Let us also recall that, for a holonomic $\cD_X$-module $\cM$, the irregularity complexes
\begin{align*}
\pIrr_{X_0}\cM&:=\Irr_{X_0}\cM[\dim X]&\text{with }&\Irr_{X_0}\cM:=\bR\Gamma_{X_0}(\DR\cM(*X_0))\\
\tag*{and}\pIrr_{X_0}^*\cM&:=\Irr_{X_0}^*\cM[\dim X]&\text{with }&\Irr_{X_0}^*\cM:=i_f^{-1}\bR\cHom_{\cD_X}(\cM(*X_0),\cO_X)
\end{align*}
are perverse (\cf\cite[Th.\,3.5-2]{Mebkhout04}). For a holonomic $\cD_X$-module $\cM$, we~denote by~$\cM^\vee$ its dual, which is the left $\cD_X$-module associated to the right $\cD_X$-module $\cExt^{\dim X}_{\cD_X}(\cM,\cD_X)$.

\begin{proposition}\label{prop:perversite}
Let $\cM$ be a holonomic $\cD_X$-module. We~have functorial isomorphisms
\[
\bR\varpif{}_*\wt\ppsif{}^\rrd\cM[1]\simeq\pIrr^*_{X_0}\cM^\vee,\qquad\bR\varpif{}_*\wt\ppsif{}^\srmod\cM\simeq\pIrr_{X_0}\cM.
\]
\end{proposition}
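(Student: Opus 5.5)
We work on the real blow-up $\varpif:\wt X(f)\to X$, with boundary $\partial\wt X(f)\simeq\XOS$ and open inclusion $\wt j:X^*:=X\moins X_0\hto\wt X(f)$. Write $\wt\imath:\partial\wt X(f)\hto\wt X(f)$ for the closed inclusion. We use the de~Rham complexes with moderate growth and with rapid decay of $\cM(*X_0)$ on $\wt X(f)$, namely $\DR^\rmod\cM$ and $\DR^\rrd\cM$, with coefficients $\cA^\rmod_{\wt X(f)}$ and $\cA^\rrd_{\wt X(f)}$. We rely on the definitions from the body of the paper:
\[
\ppsifstar\cM=\wt\imath^{-1}\bR\wt j_*\pDR(\cM|_{X^*})[-1],\qquad
\wt\ppsif{}^\rmod\cM=\wt\imath^{-1}\pDR^\rmod\cM[-1],\qquad
\wt\ppsif{}^\rrd\cM=\wt\imath^{-1}\pDR^\rrd\cM[-1].
\]
Up to the precise shift conventions, the triangles \eqref{eq:tridistmod} and \eqref{eq:tridistrd} are obtained by restricting to the boundary the natural morphisms $\DR^\rmod\cM\to\bR\wt j_*\DR(\cM|_{X^*})$ and $\DR^\rrd\cM\to\bR\wt j_*\DR(\cM|_{X^*})$.

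For the second isomorphism, the plan has three steps.

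\begin{enumerate}
\item Show that $\bR\varpif{}_*\DR^\rmod\cM\simeq\DR\cM(*X_0)$. This is the higher-dimensional moderate-growth comparison theorem. Locally, after a projective modification and a ramification, Mochizuki's theorem \cite{Mochizuki10} puts $\cM(*X_0)$ in good formal form. The statement is then proved by a Hukuhara–Turrittin type asymptotic analysis in the good case and descended by $\bR\pi_*$ of the modification. It is most likely established earlier in the paper as a property of $\DR^\rmod$, so we may invoke it.
\item Use that $\bR\varpif{}_*\bR\wt j_*\DR(\cM|_{X^*})=\bR j_*\DR(\cM|_{X^*})$.
\item Apply $\bR\varpif{}_*$ to the triangle defining $\wt\ppsif{}^\srmod\cM$ as the cone of $\DR^\rmod\cM\to\bR\wt j_*\DR$, restricted to the boundary. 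Before restriction this cone is already supported on $\partial\wt X(f)$. Since $\varpif$ is proper, pushing forward gives the cone of $\DR\cM(*X_0)\to\bR j_*j^{-1}\DR\cM(*X_0)$. By the standard triangle $\bR\Gamma_{X_0}\to\id\to\bR j_*j^{-1}$, this cone is $\bR\Gamma_{X_0}\DR\cM(*X_0)[1]=\Irr_{X_0}\cM[1]$. Tracking the shifts ($\pDR=\DR[\dim X]$, $\ppsi=\psi[-1]$) yields $\bR\varpif{}_*\wt\ppsif{}^\srmod\cM\simeq\pIrr_{X_0}\cM$.
\end{enumerate}

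For the first isomorphism, the key input is the local duality between rapid decay and moderate growth. On $\wt X(f)$ we expect
\[
\bR\cHom_{\cD}\bigl(\cM(*X_0),\cA^\rmod_{\wt X(f)}\bigr)\simeq\DR^\rrd(\cM^\vee)\quad\text{up to shift}.
\]
Equivalently, the moderate solution complex of $\cM$ is the rapid-decay de~Rham complex of $\cM^\vee$; this is the Hien/Mochizuki-type duality, again proved via good formal structure. Using $\DR(\cM^\vee)\simeq\Sol(\cM)[\dim X]$, we get that $\wt\ppsif{}^\rrd\cM^\vee[1]$ is, up to shift, $\wt\imath^{-1}$ of the moderate solution complex of $\cM$. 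Pushing forward, the moderate-growth analogue of step 1 gives
\[
\bR\varpif{}_*\bR\cHom\bigl(\cM(*X_0),\cA^\rmod\bigr)\simeq\bR\cHom\bigl(\cM(*X_0),\cO_X\bigr).
\]
This is the solution-side analogue of the comparison theorem, since $\bR\varpif{}_*\cA^\rmod=\cO_X(*X_0)$ and $\cHom(\cM(*X_0),\cO(*X_0))=\cHom(\cM(*X_0),\cO)$ by Mebkhout. The sections supported on the boundary are obtained by restricting via $i_f^{-1}$, and properness gives $\bR\varpif{}_*\wt\imath^{-1}=i_f^{-1}\bR\varpif{}_*$ on complexes supported near the boundary. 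The isomorphism stated in the proposition is for $\cM^\vee$, so we apply this with $\cM^\vee$ in place of $\cM$ and use $(\cM^\vee)^\vee=\cM$.

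The main obstacle is the analytic comparison statements: $\bR\varpif{}_*\cA^\rmod$-type vanishing for the de~Rham and solution complexes, and the rapid decay/moderate duality on $\wt X(f)$. The blow-up is along a possibly singular divisor, so we would first reduce by resolution, via a proper modification $\pi$ making $X_0$ normal crossing with $\cM$ good after ramification, and compare real blow-ups. The remaining work is careful bookkeeping of shifts and of $i_f^{-1}$ versus $\bR\Gamma_{X_0}$.
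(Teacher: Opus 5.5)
Your argument for the second isomorphism is correct and is essentially the paper's. You push forward the triangle, use $\bR\varpif{}_*\pDR^\rmod_{\wt X(f)}\cM\simeq\pDR_X(\cM(*X_0))$, and take the cone against $\bR\jf{}_*\jf^{-1}\pDR_X\cM$. (In the paper this is Lemma~\ref{lem:48}. It is obtained formally from $\bR\varpif{}_*\cA^\rmod_{\wt X(f)}\simeq\cO_X(*X_0)$, the $\varpif^{-1}\cO_X$-flatness of $\cA^\rmod_{\wt X(f)}$ and the projection formula, with no good formal structure needed.)

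Your argument for the first isomorphism does not work as written; it contains two false identities.
\begin{itemize}
\item You claim $\bR\cHom_{\cD}(\cM(*X_0),\cA^\rmod_{\wt X(f)})\simeq\DR^\rrd(\cM^\vee)$ up to shift. For coherent $\cM$ one has formally $\bR\cHom_{\varpif^{-1}\cD_X}(\varpif^{-1}\cM,\cA^\Star_{\wt X(f)})\simeq\DR^\Star_{\wt X(f)}(\cM^\vee)$ up to shift, with the \emph{same} $\Star$. The Hien/Hepler--Hohl theorem exchanges $\rmod$ and $\rrd$ only under Poincaré--Verdier duality: $\bD\pDR^\rmod_{\wt X(f)}\cM^\vee\simeq\pDR^\rrd_{\wt X(f)}\cM$. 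Take $X=\CC$, $f=x$, $\cM=\cO_X$. The complex $\bR\cHom_{\varpif^{-1}\cD_X}(\varpif^{-1}\cO_X(*0),\cA^\rmod_{\wt X(f)})$ restricts to $\CC_{S^1}$ on $\partial\wt X(f)$. But $\DR^\rrd_{\wt X(f)}\cO_X$ vanishes there, since $\partial$ is bijective on rapid-decay germs.
\item You claim $\bR\cHom_{\cD_X}(\cM(*X_0),\cO_X(*X_0))\simeq\bR\cHom_{\cD_X}(\cM(*X_0),\cO_X)$. In the same example the stalks at $0$ are $\CC\oplus\CC[-1]$ and $0$ respectively.
\end{itemize}
The two errors cancel in the final formula, but the chain of isomorphisms is not valid.

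There are two ways to repair it.

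The paper's route uses no duality theorem on $\wt X(f)$. The missing ingredient is the rapid-decay analogue of your step~1: $\bR\varpif{}_*\cA^\rrd_{\wt X(f)}\simeq\{\cO_X\to\cO_{\wh{X|X_0}}\}$, so $\iif^{-1}\bR\varpif{}_*\cA^\rrd_{\wt X(f)}\simeq\cQ_{X_0}[-1]$. Formal completion along $X_0$ appears here. Flatness and the projection formula then give $\bR\varpif{}_*\wt\ppsif{}^\rrd\cM\simeq\pDR(\cQ_{X_0}\otimes\iif^{-1}\cM)[-2]$. Kashiwara's formula identifies this with $\bR\cHom_{\iif^{-1}\cD_X}(\cM^\vee,\cQ_{X_0})[\dim X-2]$. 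Mebkhout's result identifies the latter with $\iif^{-1}\pIrr^*_{X_0}\cM^\vee[-1]$.

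Alternatively, your duality idea works once it is stated with $\bD$:
\begin{itemize}
\item By Theorem~\ref{conj:DRdual}, $\pDR^\rrd_{\wt X(f)}\cM\simeq\bD\pDR^\rmod_{\wt X(f)}\cM^\vee$.
\item Since $\varpif$ is proper, $\bR\varpif{}_*$ commutes with $\bD$.
\item Lemma~\ref{lem:48} and local duality on $X$ give $\bR\varpif{}_*\pDR^\rrd_{\wt X(f)}\cM\simeq\bD\pDR_X(\cM^\vee(*X_0))\simeq\bR\cHom_{\cD_X}(\cM^\vee(*X_0),\cO_X)[\dim X]$.
\item Applying $\iif^{-1}$ (proper base change) yields exactly $\pIrr^*_{X_0}\cM^\vee$ as defined in the paper.
\end{itemize}
This corrected route bypasses $\cQ_{X_0}$ and Mebkhout's identification. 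Its cost is that it depends on the deep Hepler--Hohl duality theorem, which the paper's proof of this proposition does not need.
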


(\Cf Section~\ref{pf:perversite} for the proof.) We conclude from the result of Mebkhout that the two complexes $\bR\varpif{}_*\wt\ppsif{}^\rrd\cM[1]$ and $\bR\varpif{}_*\wt\ppsif{}^\srmod\cM$ are perverse. Our main result is an analogous statement for objects on $\partial\wt X(f)$. Let us first adapt the notion of perversity to complexes on $\partial\wt X(f)$.

\subsubsection*{The $X_0$-support condition}
If $\cG$ is an $\RR$-constructible sheaf on $\partial\wt X(f)$, the \emph{$X_0$-support of $\cG$} is by definition the smallest closed complex analytic subset of $X_0$ containing the image under \hbox{$\varpi_f:\partial\wt X(f)\to X_0$} of the support of~$\cG$.

\begin{definition}[$X_0$-support condition]\label{def:X0support}
We~say that a bounded complex $\cG^\cbbullet$ with $\RR$-constructible cohomology on $\partial\wt X(f)$ satisfies the \emph{$X_0$-support condition} if
\[
\forall j,\quad\dim X_0\text{-}\Supp\cH^j\cG^\cbbullet\leq-j.
\]
\end{definition}

\begin{theoreme}\label{th:perversite}
Let $\cM$ be a holonomic $\cD_X$-module. Then the complexes $\ppsifstar\cM$, $\wt\ppsif{}^\srmod\cM$, $\wt\ppsif{}^\srrd\cM$, $\wt\ppsif{}^\rmod\cM[1]$, $\wt\ppsif{}^\rrd\cM[1]$ and $\wt\ppsif{}^\rmodrd\cM[1]$ on $\partial\wt X(f)$ have $\RR$\nobreakdash-con\-structible cohomology and satisfy the $X_0$-support condition.
\end{theoreme}

The $\RR$-constructibility part of this theorem follows from Remark~\ref{rem:Rconstr} and Theorem~\ref{th:DRmodRconst}. The $X_0$-support property is proved in Section~\ref{subsec:pfperversite}.

\begin{remarque}
One might wonder whether a stronger property occurs in Theorem~\ref{th:perversite}, namely, that $\wt\ppsif{}^\rmod\cM$ and $\wt\ppsif{}^\rrd\cM$ (hence $\wt\ppsif{}^\rmodrd\cM$ also) satisfy the $X_0$-support condition. This holds in the ``good case'' (\cf Section~\ref{subsec:goodperversite}).
\end{remarque}

\subsubsection*{Duality}
We denote by $\bD$ either the Poincaré-Verdier duality functor (\cf\eg\cite[Chap.\,3]{K-S90}), or the duality functor for bounded complexes of $\cD_X$-modules (so that, for a holonomic $\cD_X$\nobreakdash-mod\-ule, $\bD\cM\simeq\cM^\vee$). The following theorem, proved in Section~\ref{subsec:proofduality}, is a direct consequence of Conjecture~\ref{conj:DRdual} due to B.\,Hepler and A.\,Hohl \cite{H-H25}.

\begin{theoreme}[Behavior with respect to duality]\label{conj:dualite}
Let $\cM$ be an object of $\catD^\rb_\hol(\cD_X)$. The Poincaré-Verdier dual in $\catD^\rb_\Rc(\partial\wt X(f))$
\[
\bD\wt\ppsif{}^\srmod\cM\to\bD\ppsifstar\cM\to\bD\wt\ppsif{}^\rmod\cM\To{+1}
\]
of the distinguished triangle
\[
\wt\ppsif{}^\rmod\cM\to\ppsifstar\cM\to\wt\ppsif{}^\srmod\cM\To{+1}
\]
is functorially isomorphic to the distinguished triangle
\[
(\wt\ppsif{}^\rrd\bD\cM)[1]\to(\ppsifstar\bD\cM)[1]\to(\wt\ppsif{}^\srrd\bD\cM)[1]\To{+1}.
\]
A similar property holds after exchanging $\rmod$ and $\rrd$.
\end{theoreme}

We~can gather these two theorems as follows.

\begin{corollaire}\label{cor:dualite}
If $\cM$ is a holonomic $\cD_X$-module, the following pairs of dual objects satisfy the $X_0$-support condition:
\begin{itemize}
\item
$(\wt\ppsif{}^\srmod\cM,\bD\wt\ppsif{}^\srmod\cM)$, and $(\wt\ppsif{}^\srrd\cM,\bD\wt\ppsif{}^\srrd\cM)$,
\item
$(\wt\ppsif{}^\rmod\cM[1],\bD(\wt\ppsif{}^\rmod\cM[1]))$, and $(\wt\ppsif{}^\rrd\cM[1],\bD(\wt\ppsif{}^\rrd\cM[1]))$,
\end{itemize}
as well as the pair of \emph{shifted} dual objects
\[
(\ppsifstar\cM,(\bD\ppsifstar\cM)[-1])\quand(\wt\ppsif{}^\rmodrd\cM[1],\bD(\wt\ppsif{}^\rmodrd\cM)).\eqno\qed
\]
\end{corollaire}

\subsubsection*{Open question}
It would be natural to consider a refinement of the dichotomy moderate/rapid decay by considering precise exponential growth conditions. A comparison with the nearby slopes considered by J.-B.\,Teyssier in \cite{Teyssier16b} would then give an algebraic description of the jumping exponents occurring in the possible filtration by exponential growth.

\subsubsection*{Organization of the paper}
In Section~\ref{sec:realblowup} we recall basic constructions involving real blow-up spaces and in Section~\ref{sec:sheavesblup} we introduce the various sheaves of functions that we will need on these spaces. Their fundamental properties (mainly, flatness) have been proved in \cite{Mochizuki10} and we review them in Appendix~\ref{app:mochizuki}. To any holonomic $\cD_X$\nobreakdash-module we associate various de~Rham complexes on the real blow-up space $\wt X(f)$. We~examine their relations and prove their $\RR$-constructibility in Section~\ref{sec:dRham}. We~also recall duality properties for these de~Rham complexes, as proved in \cite{H-H25}, and which are shown to hold for meromorphic flat bundles in Appendix~\ref{app:C}, according to results in \cite{Mochizuki10}. For later use, we~also consider a relative statement in Section~\ref{subsec:noncar}. In Section~\ref{sec:nearby}, we~relate the complex $\ppsifstar\cM$ with the nearby cycle complex $\ppsi_f\pDR\cM$ equipped with its monodromy operator (\ie Theorem~\ref{th:q0-1} is proved), and a similar relation is established (\ie Theorem~\ref{th:q0-2} is proved) in Section~\ref{subsec:pfq0-2} for~$\wt\ppsif{}^\rmodrd\cM$ and $(\pDR\cM^\nils,\rT)$. The other theorems of Section~\ref{sec:intro} are also proved in Section~\ref{sec:pfintro}.

\subsubsection*{Acknowledgements}
Brian Hepler and Andreas Hohl solved the duality conjecture raised in an earlier version of this paper by means of the D'Agnolo--Kashiwara theory of enhanced ind-sheaves \cite{D-K13}. Many discussion with them were very enlightening. I also thank Takuro Mochizuki for many discussions regarding his work \cite{Mochizuki10}, which is the basis for the present one.

\section{Real blow-up spaces and their stratifications}\label{sec:realblowup}
\subsection{Real blow-up space along \texorpdfstring{$f=0$}{f0}}\label{subsec:realblowupf}
Let $f:X\to\CC$ be a holomorphic function on a complex manifold $X$ and set $X_0=f^{-1}(0)$, $X^*=X\moins X_0$. Recall (\cf\hbox{\cite[\S8.b]{Bibi10}}) that the real oriented blow-up $\wt X(f)$ of $X$ along $X_0$ is the closure in $X\times S^1$ of the graph of $f/|f|:X^*\to S^1$. The map $\varpif:\wt X(f)\to X$ is the restriction to $\wt X(f)$ of the first projection. We~have $\partial\wt X(f):=\varpif^{-1}(X_0)=X_0\times S^1$. As~a consequence, $\wt X(f)$ is the subset of the real analytic manifold $X\times S^1$ defined by the equation $f(x)-|f(x)|e^{i\theta}=0$, where $e^{i\theta}$ is the coordinate on $S^1$. This endows $\wt X(f)$ with the structure of a semi-analytic subset of $X\times S^1$.

We will use the following notation for the open and closed inclusions:
\[
\begin{aligned}
j_f:X^*&\hto X,\\
\wtj_f:X^*&\hto\wt X,
\end{aligned}
\qquad
\begin{aligned}
i_f:X_0&\hto X,\\
\wti_f:\partial\wt X(f)&\hto\wt X,
\end{aligned}
\]
so that $\varpi_f\circ\wtj_f=j_f$ and $\varpi_f\circ\wti_f=i_f\circ\varpi_f$.

Let us consider the graph inclusion $\gammaf:x\mto(x,f(x))$ in the following diagram:
\[
\xymatrix{
X\ar@/^2pc/[rr]_-{\id}\ar[d]_f\ar@{^{ (}->}[r]^-{\gammaf}&X\times\CC\ar[dl]^p\ar[r]^-r&X\\
\CC
}
\]
We then have the corresponding graph inclusion $\wt\gammaf$ in the corresponding diagram:
\begin{equation}\label{eq:realblowup}
\begin{array}{c}
\xymatrix{
\wt X(f)\ar@/^2.5pc/[rrr]_-{\varpif}\ar[d]_{\wt f}\ar@{^{ (}->}[r]^-{\wt\gammaf}&X\times\wt\CC\ar[dl]^{\wt p}\ar[r]^-\varpi&X\times\CC\ar[r]^-r&X\\
\wt\CC
}
\end{array}
\end{equation}
where $\wt\CC=S^1\times\RR_+$ is the oriented real blow-up of $\CC$ at the origin, and $\wt\gammaf(\wt X(f))$ is also identified with the closure of $\wt\gammaf(X^*)=\gammaf(X^*)$ in $X\times\wt\CC$. We~thus have $\wt\gammaf=(\varpif,\wt f)$.

We also note that the diagram
\begin{equation}\label{eq:diagcart}
\begin{array}{c}
\xymatrix{
\wt X(f)\ar@{^{ (}->}[r]^-{\wt\gammaf}\ar[d]_{\varpi_f}\ar@{}[rd]|\square&X\times\wt\CC\ar[d]^{\varpi}\\
X\ar@{^{ (}->}[r]^-{\gammaf}&X\times\CC
}
\end{array}
\end{equation}
is Cartesian (this is clear away from $(X_0,X\times\{0\})$ and on $(X_0,X\times\{0\})$).

For every morphism $\pi:Y\to X$ of complex manifolds, setting $g=f\circ\pi$, we~have a natural morphism $\wt\pi:\wt Y(g)\to\wt X(f)$ induced by the real-analytic map $\pi\times\id_{S^1}$.

We can make the construction of $\wt X(f)$ more global, associating it with the divisor defined by $f$. Let $D$ be any divisor in $X$ and let $L(D)$ be the bundle associated with~$D$, equipped with a section $f:\cO_X\to\cO_X(D)$ defining the divisor $D$ as $f^*(0)$. Let $S^1_D$ be the associated $S^1$-bundle on $X$. We~thus have a section $f/|f|:X\moins D\to S^1_D$, and the closure of its image is ``the'' real blow-up space of $X$ along $D$. Locally, it~is defined by a real-analytic equation in $S^1_D$ as above, which endows it with the structure of a semi-analytic subset of the real analytic manifold $S^1_D$. If we change the section (by~a~unit $u\in\Gamma(X,\cO_X^*)$), then the map $u/|u|:S^1_D\to S^1_D$ induces a real-analytic isomorphism between the two real blow-up spaces.

\subsection{The case of a normal crossing divisor}\label{subsec:realblowupD}
Let $Y$ be a complex manifold equipped with a normal crossing divisor $D$ with smooth components $D_i$ ($i\in I$). We~choose sections $f_i$ of $L(D_i)$.

The real blow-up space $\wt Y(D_{i\in I})$ of $Y$ along the components $D_i$ of $D$, which we simply denote here and throughout the rest of this article by $\wt Y(D)$, is~the closure in the fiber product $\bigtimes_{Y,i\in I}S^1_{D_i}$ of the image of $Y\moins D$ under the section $(f_i/|f_i|)_{i\in I}$. A local computation shows that $\wt Y(D)=\bigtimes_{Y,i\in I}\wt Y(D_i)$. Therefore, $\wt Y(D)$ is a semi-analytic subset of the real manifold $\bigtimes_{Y,i\in I}S^1_{D_i}$, and this structure does not depend on the choices made (sections $f_i$, or the ordering of $I$ to define the fiber product). Moreover, $\wt Y(D)$ is a complex manifold with a topologically smooth boundary, which is locally real-analytic isomorphic to $(S^1)^\ell\times \nobreak(\RR_+)^\ell\times\nobreak\CC^{n-\ell}$, in a neighborhood of any point in the intersection of a collection of exactly~$\ell$ components of $D$.

Assume now that $g:Y\to\CC$ is a holomorphic function such that $g^{-1}(0)=D$, with~$D$ as above. Then we have a factorization of $\varpi_D$:
\[
\wt Y(D)\To{\varpi_{D,g}}\wt Y(g)\To{\varpig}Y.
\]
This means that, in local coordinates, we~can obtain the argument of $g$ from the arguments of the coordinates defining the components of $D$.

\Subsection{Semi-analytic stratifications attached to a stratified \texorpdfstring{$\ccI$}{I}-covering}\label{subsec:Icovering}

\subsubsection*{Local study}
Let us keep the setting as in Section~\ref{subsec:realblowupD} and fix local coordinates $(y_1,\dots,y_n)$ on $Y$ centered at a point of $D$ such that $D=\{y_1\cdots y_\ell=0\}$ in a neighborhood of this point, which we still denote by $Y$. Let $\bmd=(d_1,\dots,d_\ell,1,\dots,1)$ be an $n$-multi-index consisting of positive integers and let
\begin{align*}
Y_{\bmd}&\To{\rho_{\bmd}} Y\\
(y'_1,\dots,y'_\ell,y'_{>\ell})&\Mto{\hphantom{\rho_{\bmd}}}(y_1^{\prime d_1},\dots,y_\ell^{\prime d_\ell},y'_{\sssup\ell})=(y^{\prime\bmd},y'_{\sssup\ell})
\end{align*}
be the corresponding ramified covering. Then $D\simeq D_{\bmd}:=\rho_{\bmd}^{-1}(D)=\{y'_1\cdots y'_\ell=0\}$. We~have $\wt Y_{\bmd}\simeq(S^1)^\ell\times\RR_+^\ell\times\CC^{n-\ell}$ with coordinates $(e^{i\theta'},r',y'_{\sssup\ell})$, $\partial\wt Y_{\bmd}$ is defined by $\prod_{i=1}^\ell r'_i=0$, and the map $\rho_{\bmd}$ can be lifted to the map $\wt\rho_{\bmd}:\wt Y_{\bmd}\to\wt Y$ given in coordinates by $(e^{i\theta'},r',y'_{\sssup\ell})\mto (e^{i\bmd\theta'},r^{\prime\bmd},y'_{\sssup\ell})$.

Let $\varphi\in\cO_{Y_{\bmd}}(*D_{\bmd})/\cO_{Y_{\bmd}}$ be a \emph{purely monomial} polar part of meromorphic function, that is, $\varphi=u(y')y^{\prime-\bmm}$ for some $\bmm\in\NN^\ell$ and $u'$ invertible or identically zero. It~defines a closed semi-analytic subset of $\partial\wt Y_{\bmd}$ with equation
\[
\arg u(y')-\sum_im_i\theta'_i=\pm\pi/2
\]
(this is the product of $2\gcd(\bmm)$ disjoint subtori $(S^1)^{\ell-1}$ with $\partial\RR_+^\ell\times\nobreak\CC^{n-\ell}$). The union of its images under the group of deck transformations is the pullback under $\rho_{\bmd}$ of a closed semi-analytic subset of $\partial\wt Y$ that is uniquely determined by $\varphi$.

Similarly, any finite family $\Phi_{\bmd}\subset\cO_{Y_{\bmd}}(*D_{\bmd})/\cO_{Y_{\bmd}}$ whose elements are purely monomial defines a closed semi-analytic subset of $\partial\wt Y$ (the union of those defined by each member of the family).

\subsubsection*{Global study}
The notion of a subset $\Phi_{\bmd}$ can be globalized along $D$ as the notion of a stratified $\ccI$-covering $\wt\Sigma$ (\cf\cite[Def.\,1.46 \& \S9.c]{Bibi10}). It~corresponds to that of a system of irregular values as defined in \cite[Def.\,2.4.2]{Mochizuki08}. One usually adds a goodness condition on $\Phi_{\bmd}\cup\{0\}$ (\cf\cite[Def.\,2.1.2]{Mochizuki08}, \cite[Def.\,9.12]{Bibi10}), which ensures the pure monomiality of its local sections. Recall that a meromorphic flat bundle~$\cM$ on~$Y$ with poles along~$D$ and having a good formal structure along $D$ determines a good stratified $\ccI$-covering $\wt\Sigma(\cM)$ of $\partial\wt Y(D)$ (\cf\loccit). As~the locally defined closed semi-analytic subset above depends only on the family $\Phi_{\bmd}$, we~obtain:

\begin{lemme}
Let $\wt\Sigma$ be a good stratified $\ccI$-covering of $\partial\wt Y(D)$. Then the locally defined semi-analytic subsets of $\partial\wt Y(D)$ ($y\in D$) glue together and define a closed semi-analytic subset of $\partial\wt Y(D)$.\qed
\end{lemme}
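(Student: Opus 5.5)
The plan is a two-step argument: first show the construction is local and chart-independent, then show that closedness and semi-analyticity, being local properties, survive the gluing.

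\textbf{Local definition and independence of choices.} Near a point $y\in D$, choose coordinates with $D=\{y_1\cdots y_\ell=0\}$. Goodness of $\wt\Sigma$ provides a ramification index $\bmd$ and a finite good family $\Phi_{\bmd}\subset\cO_{Y_{\bmd}}(*D_{\bmd})/\cO_{Y_{\bmd}}$ representing $\wt\Sigma$ over a neighborhood $U$ of $y$. Since $\Phi_{\bmd}\cup\{0\}$ is good, each $\varphi\in\Phi_{\bmd}$ is purely monomial. Its set $\{\arg u-\sum m_i\theta'_i=\pm\pi/2\}$ is therefore closed semi-analytic in $\partial\wt U_{\bmd}$.

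Next, take the union over $\Phi_{\bmd}$ and over the deck group $\mu_{\bmd}$. This union is $\mu_{\bmd}$-invariant, so it descends through the finite real-analytic covering $\wt\rho_{\bmd}$, which is proper. The resulting image $Z_U\subset\partial\wt U$ is closed. It is semi-analytic because it is locally described as the projection of a semi-analytic invariant set by a local real-analytic isomorphism away from branch loci. To handle those loci rigorously, I would instead describe $Z_U$ directly in $\partial\wt U$: the condition is expressible in the coordinates $\theta_i=d_i\theta'_i$ after raising to suitable powers, namely $\operatorname{Re}(e^{i\gcd\text{-adjusted}(\dots)})=0$.

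I must also check that $Z_U$ is independent of the choices. Replacing $\bmd$ by a multiple $k\bmd$ pulls $\Phi_{\bmd}$ back and gives the same image set. Changing coordinates changes $u$ but not the zero set of $\operatorname{Re}(\varphi)$'s leading angular term, because the set depends only on the argument of the leading monomial of $\varphi$. Hence the set really depends only on $\Phi_{\bmd}$, and this is what the text asserts.

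\textbf{Gluing.} Let $U,U'$ be two such neighborhoods. On $U\cap U'$, the families representing $\wt\Sigma$ coincide after a common ramification, since this is the defining compatibility of a stratified $\ccI$-covering, i.e., its local sections agree. By the independence step, $Z_U\cap\partial\wt{(U\cap U')}=Z_{U'}\cap\partial\wt{(U\cap U')}$. There is one subtlety: at points of $D$ where fewer components meet, $\wt\Sigma$ is obtained by restriction of the stratified covering. Goodness is preserved under such restriction, and the purely monomial elements restrict to purely monomial ones, possibly with $u$ acquiring nonvanishing factors from the other coordinates. So the local sets still match. Consequently the $Z_U$ glue to a subset $Z\subset\partial\wt Y(D)$. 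Closedness and semi-analyticity are local properties, so $Z$ is closed and semi-analytic.

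\textbf{Main obstacle.} I expect the hardest step to be the compatibility across strata of different codimension. There, an element of $\Phi_{\bmd}$ at a deeper point may restrict to a polar part that is zero or has smaller pole order along a component. I would handle this through the convention that $u$ may be identically zero, in which case the element contributes nothing. I would then check that the equation $\arg u-\sum m_i\theta'_i=\pm\pi/2$ is stable under restriction to a neighborhood of a nearby point.
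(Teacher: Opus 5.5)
\textbf{Comparison with the paper.} Your strategy is the same as the paper's. The paper's whole argument is the remark just before the lemma that the local set depends only on $\Phi_{\bmd}$; gluing, and the local nature of closedness and semi-analyticity, are then taken as evident.

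\textbf{The gap.} The step you single out as the main obstacle is not handled by what you propose, and your assertion that ``the local sets still match'' is false as stated. Suppose $\varphi=u\,y^{\prime-\bmm}$ has $m_j=0$ for some $j\leq\ell$. Then the set $\{\arg u-\sum_im_i\theta'_i=\pm\pi/2\}$ attached to $\varphi$ at the center also contains points of $\partial\wt Y$ lying over $D_j\moins\bigcup_{m_i>0}D_i$. At those points $\varphi$ is holomorphic, so it restricts to $0$ and contributes nothing to the local set computed there.

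For instance, take $D=\{y_1y_2=0\}$, $\bmd=(1,1)$ and $\Phi=\{1/y_1\}$, which is the covering of $(\cO_Y(*D),\rd+\rd(1/y_1))$. The set defined at the origin is $\{\theta_1=\pm\pi/2\}$. It contains the whole fiber $\varpi_D^{-1}(ir,0)$ for every $r>0$, whereas the set defined at $(ir,0)$ is empty.

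The same example shows that your independence step is incomplete. You checked changes of coordinates and of ramification, but not changes of the representative of $\varphi$ modulo $\cO$. Write the same class as $(1+y_1)/y_1$, i.e.\ $u=1+y_1$. The part of the set lying over $D_2\moins D_1$ was the preimage of $\{\mathrm{Re}\,y_1=0\}$; it becomes the preimage of $\{\mathrm{Re}(1/y_1)=-1\}$. So this part is not determined by $\Phi_{\bmd}$ at all. Your convention that ``$u\equiv0$ contributes nothing'' only covers elements that are already zero at the center, not elements that become zero after restriction. The paper's own formulation, with $\bmm\in\NN^\ell$ and the set taken over all of $\partial\RR_+^\ell$, glosses over the same point.

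\textbf{How to repair it.} Attach to $\varphi$ only the part of that set lying over its polar divisor, i.e.\ intersect it with the preimage of $\bigcup_{m_i>0}D_{\bmd,i}$. This is still closed and semi-analytic. On that locus $y^{\prime\bmm}=0$, so the boundary values of $u$ there depend only on the class of $\varphi$.

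Now take a nearby point where only the components with index $i\notin J$ pass. There one has $\tilde u=u\prod_{j\in J}y_j^{\prime-m_j}$ and $\theta'_j=\arg y'_j$ for $j\in J$, so the defining equation is literally unchanged. Elements restricting to $0$ now contribute nothing on both sides. With this modification your gluing argument goes through.

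\textbf{A separate weak point: semi-analyticity after descent.} Your justification does not work as written. $\wt\rho_{\bmd}$ is an unramified finite covering. What fails is only the analyticity of its inverse in the radial variables ($r'_i\mapsto r_i^{\prime d_i}$ near $r'_i=0$), and that is on the boundary where your set lives. So ``away from branch loci'' does not help, and the ``gcd-adjusted'' formula is not an argument.

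The paper asserts this descent without proof before the lemma. One way to obtain it is Weierstrass division by $r_i^{\prime d_i}-r_i$, followed by \L{}ojasiewicz's Tarski--Seidenberg theorem for sets defined by functions polynomial in one variable. Alternatively, settle for subanalyticity, which is all the later stratification arguments use.
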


There exist then semi-analytic stratifications $\partial\wt\cY$ of $\partial\wt Y(D)$ compatible with this closed semi-analytic set. We~will consider any such stratification.

\section{Sheaves on the real blow-up spaces}\label{sec:sheavesblup}
In this section, we~recall various results from \cite{Mochizuki10}. We~add some easy complements, the proofs of which are given in Appendix~\ref{app:mochizuki}, following the same lines as in \loccit

\subsection{Sheaves of functions on the real blow-up space along \texorpdfstring{$f=0$}{f0}}\label{subsec:sheavesblup}
We keep the notation of Section~\ref{subsec:realblowupf} and we implicitly refer to Diagram \eqref{eq:realblowup}. We~will be mainly interested in the following two sheaves of functions on $\wt X(f)$, whose restrictions to $X^*$ are equal to $\cO_{X^*}$:
\begin{itemize}
\item
$\cA_{\wt X(f)}^{\rmod X_0}$, which we simply denote by $\cA_{\wt X(f)}^\rmod$, is~the sheaf of functions which are holomorphic on $X^*$ and have moderate growth along $\partial\wt X(f)$,
\item
$\cA_{\wt X(f)}^{\rrd X_0}$, which we simply denote by $\cA_{\wt X(f)}^\rrd$, is~the sheaf of functions that are holomorphic on $X^*$ and have rapid decay along $\partial\wt X(f)$.
\end{itemize}
\begin{itemize}
\item
We also use the notation $\cA_{\wt X(f)}^*$ for $\wtjf{}_*\cO_{X^*}$. We~thus have natural inclusions\vspace*{-3pt}
\[
\cA_{\wt X(f)}^\rrd\subset\cA_{\wt X(f)}^\rmod\subset\cA_{\wt X(f)}^*:=\wtjf{}_*\cO_{X^*}.
\]
\item
Furthermore, we set\vspace*{-3pt}
\begin{align*}
\cA_{\wt X(f)}^\srmod&:=\cA_{\wt X(f)}^*/\cA_{\wt X(f)}^\rmod,\\
\cA_{\wt X(f)}^\srrd&:=\cA_{\wt X(f)}^*/\cA_{\wt X(f)}^\rrd,\\
\cA_{\wt X(f)}^\rmodrd&:=\cA_{\wt X(f)}^\rmod/\cA_{\wt X(f)}^\rrd\simeq\textstyle\ker\bigl[\cA_{\wt X(f)}^\srrd\onto\cA_{\wt X(f)}^\srmod\bigr],
\end{align*}
which are sheaves supported on $\partial\wt X(f)$.
\end{itemize}

\begin{notation}\label{nota:cB}
We will use the notation $\cA_{\wt X(f)}^\Star$ to denote any of the previous sheaves, with $\Star\in\{*,\rmod,\rrd,\ssup\rmod,\ssup\rrd,\rmodrd\}$.
\end{notation}

Similarly, we~will consider the special case of a projection, \ie the space $X\times\CC$ with divisor $X\times\{0\}$ and real blown-up space $X\times\wt\CC$, and the corresponding sheaves~$\cA^\Star_{X\times\wt\CC}$. Let us first note the following properties.
\numero\label{subsec:sheavesblupa}
Multiplication by $f$ is invertible on $\cA^\Star_{\wt X(f)}$ (this is obvious).
\numero\label{subsec:sheavesblup0}
We have $\bR\wtjf{}_*\cO_{X^*}=\wtjf{}_*\cO_{X^*}$.

Indeed, identifying $\wt X(f)$ with its image under $\wt\gammaf$, each point of $\partial\wt X(f)\subset X\times S^1\subset X\times\wt\CC$ has a fundamental system of open neighborhoods $\wt V=(U\times\wt\Sigma)\cap \wt X(f)$, where $U\subset X$ is Stein and $\wt\Sigma\subset\wt\CC$ is defined by $\theta\in(\theta_o-\epsilon,\theta_o+\epsilon)$, if $e^{i\theta}$ is the coordinate on $S^1$ and $\wt\CC$ is identified with $S^1\times\RR_+$. Set $\Sigma^*=\wt\Sigma\cap\CC^*$. Note that $(U^*\times\Sigma^*)$ is a Stein open set, since $U^*$ is the complement of a hypersurface in $U$ and $\Sigma^*$ is convex. Then $\wt V\cap X^*=(U^*\times\Sigma^*)\cap \wt X(f)=(U^*\times\Sigma^*)\cap\gammaf(X^*)$ is the intersection of a Stein open subset of $X^*\times\CC^*$ with the closed submanifold $\gammaf(X^*)$, and is hence also Stein, and Cartan--Serre's Theorem B yields Assertion~\ref{subsec:sheavesblup0}.

Let us recall basic results concerning these sheaves in the present setting, proved in \cite[Th.\,4.1.5]{Mochizuki10} in greater generality. (See Appendix~\ref{app:mochizuki} for details.)
\numero\label{subsec:sheavesblup1}
The sheaves $\cA^\Star_{\wt X(f)}$ are $\varpif^{-1}\cO_X$-flat.

\numero\label{subsec:sheavesblup2}
$\varpi_f^{-1}\cO_X\otimes^{\bL}_{\wt\gamma_f^{-1}\varpi^{-1}\cO_{X\times\CC}}\wt\gamma_f^{-1}\cA^\Star_{X\times\wt\CC}=\varpi_f^{-1}\cO_X\otimes_{\wt\gamma_f^{-1}\varpi^{-1}\cO_{X\times\CC}}\wt\gamma_f^{-1}\cA^\Star_{X\times\wt\CC}\simeq\cA^\Star_{\wt X(f)}$.

Since $\varpif^{-1}\cD_X$ acts in a natural way on $\cA^\Star_{\wt X(f)}$ (\ie the condition $\Star$ is preserved by derivation), the sheaf
\[
\cD^\Star_{\wt X(f)}:=\cA^\Star_{\wt X(f)}\otimes_{\varpif^{-1}\cO_X}\varpif^{-1}\cD_X=\varpif^{-1}\cD_X\otimes_{\varpif^{-1}\cO_X}\cA^\Star_{\wt X(f)}
\]
is a sheaf of rings on $\wt X(f)$. Any $\cD_X$-module $\cM$ gives rise to a $\cD^\Star_{\wt X(f)}$-module
\[
\varpi_f^\Star\cM:=\cD^\Star_{\wt X(f)}\otimes_{\varpi_f^{-1}\cD_X}\varpi_f^{-1}\cM=\cA^\Star_{\wt X(f)}\otimes_{\varpi_f^{-1}\cO_X}\varpi_f^{-1}\cM.
\]
As \ref{subsec:sheavesblup2} is compatible with the $\cD$-module structure, it also reads:

\numero\label{subsec:sheavesblup2b}
$\varpi_f^{-1}\cD_{X\To{\gamma_f} X\times\CC}\otimes^{\bL}_{\varpi^{-1}\cD_{X\times\CC}}\cA^\Star_{X\times\wt\CC}\simeq\cA^\Star_{\wt X(f)}$ as a left $\varpi_f^{-1}\cD_X$-module.

\numero\label{subsec:sheavesblup2c}
Flatness of $\cA^\Star_{\wt X(f)}$ over $\varpi_f^{-1}\cO_X$ immediately implies that we have exact sequences
\begin{gather}
0\to\varpi_f^\rmod\cM\to\varpi_f^*\cM\to\varpi^\srmod\cM\to0,\tag{\ref{subsec:sheavesblup2c}${*}$}\label{eq:tridistDmod}\\
0\to\varpi_f^\rrd\cM\to\varpi_f^*\cM\to\varpi_f^\srrd\cM\to0,\tag{\ref{subsec:sheavesblup2c}${*}{*}$}\\
0\to\varpi_f^\rrd\cM\to\varpi_f^\rmod\cM\to\varpi_f^\rmodrd\cM\to0.\tag{\ref{subsec:sheavesblup2c}${*}{*}{*}$}\label{eq:tridistDmodrd}
\end{gather}

On the other hand, we~have (\cf\eg\cite[\S II.1.1]{Bibi97}):
\numero\label{subsec:sheavesblup3}
$\bR\varpi_*\cA_{X\times\wt\CC}^\rmod=\varpi_*\cA_{X\times\wt\CC}^\rmod\simeq\cO_{X\times\CC}(*(X\times0))$,
\numero\label{subsec:sheavesblup4}
$\bR\varpi_*\cA_{X\times\wt\CC}^\rrd\simeq\{0\to\cO_{X\times\CC}\to\cO_{\wh{X\times\CC|X\times0}}\to0\}$, where the latter sheaf is the formal completion of $\cO_{X\times\CC}$ along $X\times0$ (it is zero on $X\times\CC^*$).

We then deduce:
\numero\label{subsec:sheavesblup5}
$\bR\varpif{}_*\cA_{\wt X(f)}^\rmod=\varpif{}_*\cA_{\wt X(f)}^\rmod\simeq\cO_X(*X_0)$. Indeed,
\tagdroite
\begin{align*}
\bR\gammaf{}_*\bR\varpif{}_*&\cA_{\wt X(f)}^\rmod\simeq\bR\varpi_*\bR\wt\gammaf{}_*\cA_{\wt X(f)}^\rmod\\
&\simeq\bR\varpi_*(\varpi^{-1}\cO_{\gammaf(X)}\otimes_{\varpi^{-1}\cO_{X\times\CC}}\cA_{X\times\wt\CC}^\rmod)\quad\text{(by \ref{subsec:sheavesblup2})}\\
&=\bR\varpi_*(\varpi^{-1}\cO_{\gammaf(X)}\overset{\bL}\otimes_{\varpi^{-1}\cO_{X\times\CC}}\cA_{X\times\wt\CC}^\rmod)\quad\text{(by \ref{subsec:sheavesblup1})}\\
&\simeq\cO_{\gammaf(X)}\overset{\bL}\otimes_{\cO_{X\times\CC}}\bR\varpi_*\cA_{X\times\wt\CC}^\rmod\quad\text{(projection formula \cite[Prop.\,2.6.6]{K-S90})}\\
&\simeq\cO_{\gammaf(X)}\otimes_{\cO_{X\times\CC}}\cO_{X\times\CC}(*(X\times0))\simeq\bR\gammaf{}_*\cO_X(*X_0)\quad\text{(by \ref{subsec:sheavesblup3})}.\tag*{\qed}
\end{align*}
\taggauche
\numero\label{subsec:sheavesblup6}
$\bR\varpif{}_*\cA_{\wt X(f)}^\rrd\simeq\{0\to\cO_X\to\cO_{\wh{X|X_0}}\to0\}$ or, equivalently,
\[\textstyle
\bR\varpif{}_*\cA_{\wt X(f)}^\rrd\simeq\{0\to\cO_X(*X_0)\to\cO_{\wh{X|X_0}}(*X_0)\to0\}.
\]
Indeed, we~have similarly
\[\textstyle
\bR\gammaf{}_*\bR\varpif{}_*\cA_{\wt X(f)}^\rrd\simeq\cO_{\gammaf(X)}\overset{\bL}\otimes_{\cO_{X\times\CC}}\{0\to\cO_{X\times\CC}\to\cO_{\wh{X\times\CC|X\times0}}\to0\}
\]
and by flatness of $\cO_{\wh{X\times\CC|X\times0}}$ over $\cO_{X\times\CC|X\times0}$, the assertion is reduced to the identification (\cf\eg\cite[Cor.\,II.2]{Serre65})
\[
\cO_{\gammaf(X)}\otimes_{\cO_{X\times\CC}}\cO_{\wh{X\times\CC|X\times0}}\simeq\gammaf{}_*\cO_{\wh{X|X_0}}.\eqno\qed
\]
We set
\begin{equation}\label{eq:QX0}
\cQ_{X_0}=\iif^{-1}(\cO_{\wh{X|X_0}}/\cO_X)=\iif^{-1}\cO_{\wh{X|X_0}}/\iif^{-1}\cO_X
\end{equation}
(notation of \cite{Mebkhout90}). Then
\[
\iif^{-1}\bR\varpif{}_*\cA_{\wt X(f)}^\rrd\simeq\cQ_{X_0}[-1].\eqno\qed
\]

\numero\label{subsec:sheavesblup7}
We also conclude that
\[
\bR\varpif{}_*\cA_{\wt X(f)}^\rmodrd\simeq\cO_{\wh{X|X_0}}(*X_0).
\]

For every projective morphism $\pi:Y\to X$, setting $g=f\circ\pi$, we~also have, according to \cite[Th.\,4.1.5]{Mochizuki10}:

\numero\label{eq:pushpiA}
If $\cM$ is an inductive limit of coherent $\cO_Y$-modules, then
\[
\bR\wt\pi_*(\cA^\Star_{\wt Y(g)}\overset{\bL}\otimes_{\varpi_g^{-1}\cO_Y}\varpi_g^{-1}\cM)\simeq\cA^\Star_{\wt X(f)}\overset{\bL}\otimes_{\varpif^{-1}\cO_X}\varpif^{-1}\bR\pi_*\cM.
\]
Indeed, by the flatness property \ref{subsec:sheavesblup1}, we~can just consider the non-derived tensor product and argue with $R^k\wt\pi_*,R^k\pi_*$ for each $k$. As~$\wt\pi,\pi$ are proper, taking inductive limits commutes with taking $R^k\wt\pi_*,R^k\pi_*$, and we are reduced to considering the case where~$\cM$ is $\cO_Y$-coherent, which is precisely \cite[Th.\,4.1.5]{Mochizuki10}.

\numero\label{eq:pusheA}
For example, if $e:Y\to X$ is a proper modification that is an isomorphism over $X\moins X_0$, we~deduce from \ref{eq:pushpiA}:
\[
\cA^\Star_{\wt X(f)}=\wt e_*\cA^\Star_{\wt Y(g)}=\bR\wt e_*\cA^\Star_{\wt Y(g)}.
\]
Indeed, we~first note that $\bR e_*\cO_Y(*Y_0)$ has $\cO_X(*X_0)$-coherent cohomology, and since $R^ke_*\cO_Y(*Y_0)$ is supported on $X_0$ for $k\geq1$, it~follows that $\bR e_*\cO_Y(*Y_0)=e_*\cO_Y(*Y_0)=\cO_X(*X_0)$. Since $\cA^\Star_{\wt Y(g)}$, \resp $\cA^\Star_{\wt X(f)}$, is~an $\cO_Y(*Y_0)$-module, \resp an $\cO_X(*X_0)$-module, \ref{eq:pushpiA} reads
\begin{multline*}
\bR\wt e_*(\cA^\Star_{\wt Y(g)}\overset{\bL}\otimes_{\varpi_g^{-1}\cO_Y(*Y_0)}\varpi_g^{-1}(\cO_Y(*Y_0)\otimes_{\cO_Y}\cM))\\
\simeq\cA^\Star_{\wt X(f)}\overset{\bL}\otimes_{\varpif^{-1}\cO_X(*X_0)}\varpif^{-1}(\cO_X(*X_0)\overset{\bL}\otimes_{\cO_X}\bR e_*\cM),
\end{multline*}
and the latter term is identified with
\[
\cA^\Star_{\wt X(f)}\overset{\bL}\otimes_{\varpif^{-1}\cO_X(*X_0)}\varpif^{-1}\bR e_*(\cO_Y(*Y_0)\overset{\bL}\otimes_{\cO_Y}\cM).
\]
Applying this to $\cM=\cO_Y$, we~find
\tagdroite
\begin{align*}
\bR\wt e_*\cA^\Star_{\wt Y(g)}&\simeq\cA^\Star_{\wt X(f)}\overset{\bL}\otimes_{\varpif^{-1}\cO_X(*X_0)}\varpif^{-1}\bR e_*\cO_Y(*Y_0)\\
&\simeq\cA^\Star_{\wt X(f)}\overset{\bL}\otimes_{\varpif^{-1}\cO_X(*X_0)}\varpif^{-1}\cO_X(*X_0)\quad\text{by the preliminary remark}\\
&\simeq\cA^\Star_{\wt X(f)}.\tag*{\qed}
\end{align*}
\taggauche

\subsection{Nilsson class functions}
We work on $X\times\CC$ with coordinate $t$ on $\CC$ and on the real blow-up space $X\times\wt\CC$. Recall that the blowing-up map is denoted by $\varpi$. Let $\cA_{X\times\wt\CC}$ be the subsheaf of $\cC^\infty_{X\times\wt\CC}$ consisting of functions killed by $\ov\partial_x$ and $t\ov\partial_t$. With respect to $\cA_{X\times\wt\CC}^\rrd$ and $\cA_{X\times\wt\CC}^\rmod$ defined in Section~\ref{subsec:sheavesblup}, we~have the inclusions
\[
\cA_{X\times\wt\CC}^\rrd\subset\cA_{X\times\wt\CC}\subset\cA_{X\times\wt\CC}^\rmod.
\]
The restriction of each of these sheaves to $X\times\CC^*$ is equal to $\cO_{X\times\CC^*}$. There is an exact sequence (\cf\eg\cite[Chap.\,II, Prop.\,1.1.16]{Bibi97})
\begin{equation}\label{eq:suiteexrdformel}
0\to\cA_{X\times\wt\CC}^\rrd\to\cA_{X\times\wt\CC}\to\varpi^{-1}\cO_X\lcr t\rcr\to0.
\end{equation}
We consider Nilsson class functions in two ways:
\begin{enumerate}
\item
Algebraically, we~consider the $\cO_X[t]$-module
\[
\Nils=\bigoplus_{\alpha\in\CC}\Nils_\alpha=\bigoplus_{\substack{\alpha\in\CC\\p\in\NN}}\cO_X\otimes e_{\alpha,p}
\]
with the $\cD_X\otimes\Clt$-module structure given by (identifying $1\otimes e_{\alpha,p}$ with $e_{\alpha,p}$)
\[
t\cdot e_{\alpha,p}=e_{\alpha+1,p},\quad \partial_te_{\alpha,p}=(\alpha+1) e_{\alpha-1,p}+e_{\alpha-1,p-1},\quad\partial_xe_{\alpha,p}=0,
\]
where we adopt the convention that $e_{\alpha,-1}=0$ for any $\alpha\in\CC$. In other words, $e_{\alpha,p}$~behaves like the multi-valued holomorphic function $t^{\alpha+1}(\log t)^p/p!$. We~note that $\Nils$ is a $\cD_X\otimes\Cltm$-module, that is, $t$ acts in an invertible way on $\Nils$. The monodromy operator on $\Nils$ is the operator $\rT_\Nils$ acting as $\exp(2\pii t\partial_t)$.

\item
We also consider the corresponding formal sheaf
\[
\wh\Nils=\cO_X\lcr t\rcr\otimes_{\cO_X[t]}\Nils,
\]
which we also regard as a sheaf of $\cO_X\{t\}$-modules.

\item
We denote by $\cA^\nils_{X\times\wt\CC}$ the subsheaf of $\cA^\rmod_{X\times\wt\CC}$ generated over $\cA_{X\times\wt\CC}$ by the local determinations of the functions $t^{\alpha+1}(\log t)^p/p!$ ($\alpha\in\CC,\,p\in\NN$).
\end{enumerate}

Let $\rho:\RR\to S^1$ denote the universal covering $\theta\mto\rme^{2\pii\theta}$ and also the corresponding covering $X\times(\RR\times\RR_+)\to X\times\wt\CC$. We have a natural inclusion
\begin{align*}
(\varpi\circ\rho)^{-1}\Nils&\hto\rho^{-1}\cA_{X\times\wt\CC}^\rmod\\
e_{\alpha,p}&\mto r^{\alpha+1}\rme^{2\pii\alpha\theta}\frac{(\log r+2\pii\theta)^p}{p!}
\end{align*}
inducing an isomorphism
\[
\bigoplus_{\alpha,p}(\rho^{-1}\cA_{X\times\wt\CC}\otimes e_{\alpha,p})\isom\rho^{-1}\cA_{X\times\wt\CC}^\nils.
\]
The exact sequence \eqref{eq:suiteexrdformel} therefore yields the exact sequence
\begin{equation}\label{eq:suiteexnilsformel}
0\to\rho^{-1}\cA_{X\times\wt\CC}^\rrd\to\rho^{-1}\cA_{X\times\wt\CC}^\nils\to(\varpi\circ\rho)^{-1}\wh\Nils\to0.
\end{equation}

Recall that $\gamma_f:X\hto X\times\CC$ denotes the graph inclusion of $f$. We set
\[
\cA_{\wt X(f)}^\nils:=\varpi^{-1}\cO_{\gamma_f(X)}\otimes_{\varpi^{-1}\cO_{X\times\CC}}\cA_{X\times\wt\CC}^\nils.
\]
This is indeed a sheaf on $\wt X(f)$, since the diagram \eqref{eq:diagcart} is Cartesian. It~is denoted by $\cA_{X,f}^\mathrm{nil}$ in \cite[Prop.\,4.5.14]{Mochizuki10}.

On the other hand, if $(Y,D)$ is a normal crossing pair, the sheaf $\cA_{\wt Y(D)}^\nils$ is defined similarly (\cf\cite[\S4.1.4.2]{Mochizuki10}). Let $e:Y\to X$ be a projective modification such that the divisor $D_g$ of $g=f\circ e$ has normal crossings. There exists a natural morphism $\wt e:\wt Y(D_g)\to\wt X(f)$.

\begin{theoreme}[{\cite[Th.\,4.5.12]{Mochizuki10}}]
There is an isomorphism
\[
\bR \wt e_*\cA_{\wt Y(D_g)}^\nils\isom\cA_{\wt X(f)}^\nils.
\]
\end{theoreme}

\subsection{The case of a normal crossing divisor}\label{subsec:ncdA}
The sheaves $\cA^\Star_{\wt Y(D)}$ are defined on $\wt Y(D)$ similarly to the case of a smooth divisor (Section~\ref{subsec:sheavesblup}). The following results hold, according to \cite[Th.\,4.1.5, Prop.\,4.2.4, Th.\,4.5.1]{Mochizuki10}.

\numero\label{subsec:sheavesblupD1}
The sheaves $\cA^\Star_{\wt Y(D)}$ are $\varpi_D^{-1}\cO_Y$-flat.
\numero\label{subsec:sheavesblupD2}
$\bR\varpi_{D,g*}\cA^\Star_{\wt Y(D)}=\varpi_{D,g*}\cA^\Star_{\wt Y(D)}=\cA^\Star_{\wt Y(g)}$.

\subsection{Localization and formalization of $\cD_X$-modules}
For a coherent $\cD_X$\nobreakdash-mod\-ule $\cM$, the localized $\cD_X$-module $\cM(*X_0)$ is defined as $\cO_X(*X_0)\otimes_{\cO_X}\cM$. On the other hand, the formalized $\cD_X$-module $\cM_{\wh{X|X_0}}$ along $X_0$ is defined as $\cO_{\wh{X|X_0}}\otimes_{\cO_X}\cM$. There are natural morphisms
\[
\cM\to\cM(*X_0)\quand \cM\to\cM_{\wh{X|X_0}}.
\]
Let us denote by $\cQ_{\wh{X|X_0}}$ the complex $\cO_X\to\cO_{\wh{X|X_0}}$ with terms in degrees~$0$ and~$1$ respectively. With the previous notation \eqref{eq:QX0}, we~have $i_f^{-1}\cH^1\cQ_{\wh{X|X_0}}=\cQ_{X_0}$. The sheaf-theoretic restriction of $\cQ_{\wh{X|X_0}}$ to $X^*$ is $\cO_{X^*}$ and that to $X_0$ is $\cO_{\wh{X|X_0}}/\cO_X[-1]$. This complex is isomorphic to the complex $\cO_X(*X_0)\to\cO_{\wh{X|X_0}}(*X_0)$. For a bounded complex of $\cD_X$-modules, there is a distinguished triangle in $\catD^\rb(\cD_X)$:
\begin{equation}\label{eq:QMM}
\cQ_{\wh{X|X_0}}\otimes^{\bL}_{\cO_X}\cM\to\cM(*X_0)\to\cM_{\wh{X|X_0}}(*X_0)\To{+1}.
\end{equation}

According to Properties \ref{subsec:sheavesblup5}--\ref{subsec:sheavesblup7} and the projection formula for $\varpi_f$, we~deduce:

\begin{proposition}
Let $\cM$ be a $\cD_X$-module. The distinguished triangle associated to the pushforward by $\varpi_f$ of the exact sequence \eqref{eq:tridistDmodrd} is isomorphic to the distinguished triangle \eqref{eq:QMM}.\qed
\end{proposition}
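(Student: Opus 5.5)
We want to show that $\bR\varpif{}_*$ applied to the exact sequence \eqref{eq:tridistDmodrd},
\[
0\to\varpi_f^\rrd\cM\to\varpi_f^\rmod\cM\to\varpi_f^\rmodrd\cM\to0,
\]
gives a triangle isomorphic to \eqref{eq:QMM}. The plan is to compute each term by the projection formula, identify it with the matching term of \eqref{eq:QMM}, and then check compatibility of the morphisms.

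\textbf{Step 1: reduce to the structure sheaves.} Since $\varpi_f^\Star\cM=\cA^\Star_{\wt X(f)}\otimes_{\varpif^{-1}\cO_X}\varpif^{-1}\cM$ and $\cA^\Star_{\wt X(f)}$ is $\varpif^{-1}\cO_X$-flat (\ref{subsec:sheavesblup1}), this tensor product coincides with the derived one. The projection formula \cite[Prop.\,2.6.6]{K-S90} then gives
\[
\bR\varpif{}_*\varpi_f^\Star\cM\simeq\bR\varpif{}_*\cA^\Star_{\wt X(f)}\otimes^{\bL}_{\cO_X}\cM .
\]
This is functorial in $\cA^\Star$, so it carries the sequence \eqref{eq:tridistDmodrd} of sheaves $\cA^\rrd\to\cA^\rmod\to\cA^\rmodrd$ to the corresponding triangle. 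The formula must be applied to the $\varpif^{-1}\cO_X$-module $\cA^\Star$, with $\cM$ pulled back, and no coherence of $\cM$ is needed here because $\varpif$ is proper. It is compatible with the left $\cD_X$-actions, since $\varpif^{-1}\cD_X$ acts diagonally on both sides.

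\textbf{Step 2: plug in the computed pushforwards.} By \ref{subsec:sheavesblup5}, \ref{subsec:sheavesblup6} and \ref{subsec:sheavesblup7}, the pushforwards of $\cA^\rmod$, $\cA^\rrd$ and $\cA^\rmodrd$ are $\cO_X(*X_0)$, the complex $\cO_X(*X_0)\to\cO_{\wh{X|X_0}}(*X_0)$ (that is, $\cQ_{\wh{X|X_0}}$), and $\cO_{\wh{X|X_0}}(*X_0)$ respectively. Tensoring these with $\cM$ yields the three terms of \eqref{eq:QMM}.

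\textbf{Step 3: match the morphisms.} This is the only point needing care, since we must show the triangle arising from the short exact sequence $0\to\cA^\rrd\to\cA^\rmod\to\cA^\rmodrd\to0$ is the triangle arising from the complex $\cQ_{\wh{X|X_0}}=[\cO_X(*X_0)\to\cO_{\wh{X|X_0}}(*X_0)]$. I would proceed as follows.

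\begin{enumerate}
\item Realize $\bR\varpif{}_*$ by a functorial resolution, for instance Godement resolutions, applied to the short exact sequence of $\cA$'s.
\item Note that $\varpif{}_*\cA^\rmod=\cO_X(*X_0)$, with no higher direct images.
\item Identify the map $\bR\varpif{}_*\cA^\rmod\to\bR\varpif{}_*\cA^\rmodrd$ with $\cO_X(*X_0)\to\cO_{\wh{X|X_0}}(*X_0)$, namely the Taylor expansion of moderate functions. This identification is the content of \ref{subsec:sheavesblup4}, \ref{subsec:sheavesblup6} and \ref{subsec:sheavesblup7}, obtained via the Borel--Ritt asymptotic expansion.
\item Conclude that the cocone of this map is $\bR\varpif{}_*\cA^\rrd$, canonically identified with $\cQ_{\wh{X|X_0}}$ as a mapping cone.
\end{enumerate}

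I expect the main obstacle to be this last identification: checking that the isomorphism in \ref{subsec:sheavesblup6} is exactly the one induced by the cone of the asymptotic-expansion map, rather than merely an abstract isomorphism. I would settle this first on $X\times\wt\CC$ using \cite[\S II.1.1]{Bibi97}, then transport it to $\wt X(f)$ through \ref{subsec:sheavesblup2} and the graph embedding exactly as in the proof of \ref{subsec:sheavesblup5}.
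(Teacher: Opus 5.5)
Your proposal is correct and follows the paper's argument: $\varpif^{-1}\cO_X$-flatness \ref{subsec:sheavesblup1} together with the projection formula for the proper map $\varpif$, followed by the computations \ref{subsec:sheavesblup5}--\ref{subsec:sheavesblup7}. The paper leaves the matching of morphisms implicit (\ref{subsec:sheavesblup7} is itself deduced from the triangle), so your Step~3 simply makes explicit what is needed: by the triangulated axioms it suffices to check that the isomorphism \ref{subsec:sheavesblup6} is compatible with the natural map to $\cO_X(*X_0)$, which is what your reduction to \cite[\S II.1.1]{Bibi97} on $X\times\wt\CC$, followed by transport along the graph embedding, provides.
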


\section{The moderate and rapid-decay de~Rham complexes on \texorpdfstring{$\protect\wt{X}(f)$}{Xf}}\label{sec:dRham}

\Subsection{Moderate and rapid-decay de~Rham complexes for \texorpdfstring{$\cD_X$}{DX}-modules}
We still use Notation~\ref{nota:cB}. Let $\cM$ be a $\cD_X$-module. Since $\cA^\Star_{\wt X(f)}$ is a left $\varpif^{-1}\cD_X$\nobreakdash-mod\-ule, we~can set
\[
\DR_{\wt X(f)}^\Star\cM:=\bigl(\cA^\Star_{\wt X(f)}\otimes_{\varpif^{-1}\cO_X}\varpif^{-1}(\Omega_X^\cbbullet\otimes\cM),\nabla\bigr)
\]
and
\[
\pDR_{\wt X(f)}^\Star\cM:=\DR_{\wt X(f)}^\Star\cM[\dim X].
\]
We can replace $\Omega_X^\cbbullet$ with $\Omega_X^\cbbullet(*X_0)$ since $f$ is invertible on $\cA^\Star_{\wt X(f)}$. Recall that the Spencer complex $\Sp(\cD_X)$ is a resolution of $\cO_X$ by locally free left $\cD_X$-modules. Then the $\Star$-Spencer complex $\Sp(\cD_{\wt X(f)}^\Star):=\cA_{\wt X(f)}^\Star\otimes_{\varpif^{-1}\cO_X}\varpif^{-1}\Sp(\cD_X)$ is a resolution of $\cA_{\wt X(f)}^\Star$ by locally free left $\cD_{\wt X(f)}^\Star$-modules. The following result is obtained in a standard way, and can be used for the definition of $\pDR_{\wt X(f)}^\Star\cM$ for $\cM$ in $\catD^\rb(\cD_X)$.

\begin{lemme}\label{lem:41}
For a left $\cD_X$-module $\cM$ we have
\tagdroite
\begin{align*}
\pDR_{\wt X(f)}^\Star\cM&\simeq\cHom_{\cD_{\wt X(f)}^\Star}\bigl(\Sp(\cD_{\wt X(f)}^\Star),\varpif^\Star\cM\bigr)
\\
&\simeq\bR\cHom_{\cD_{\wt X(f)}^\Star}\bigl(\cA_{\wt X(f)}^\Star,\varpif^\Star\cM\bigr)
\\
&\simeq\bR\cHom_{\varpif^{-1}\cD_X}\bigl(\varpif^{-1}\cO_X,\varpif^\Star\cM\bigr).\tag*{\qed}
\end{align*}
\taggauche
\end{lemme}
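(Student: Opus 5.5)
The plan is to build the identifications in three stages: the definition of the $\Star$-de~Rham complex as an explicit complex, then the $\Star$-Spencer resolution, then a base change along the flat morphism of rings $\varpif^{-1}\cD_X\to\cD^\Star_{\wt X(f)}$.

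\emph{First isomorphism.} I would compare $\cHom_{\cD_{\wt X(f)}^\Star}\bigl(\Sp(\cD_{\wt X(f)}^\Star),\varpif^\Star\cM\bigr)$ with $\DR^\Star_{\wt X(f)}\cM$ term by term. The terms of $\Sp(\cD_X)$ are $\cD_X\otimes_{\cO_X}\wedge^{k}\Theta_X$ (in degree $-k$). Tensoring with $\cA^\Star_{\wt X(f)}$ gives $\cD^\Star_{\wt X(f)}\otimes_{\varpif^{-1}\cO_X}\varpif^{-1}\wedge^k\Theta_X$, and by adjunction
\[
\cHom_{\cD^\Star_{\wt X(f)}}\bigl(\cD^\Star_{\wt X(f)}\otimes\varpif^{-1}\wedge^k\Theta_X,\varpif^\Star\cM\bigr)\simeq\varpif^{-1}\Omega^k_X\otimes_{\varpif^{-1}\cO_X}\varpif^\Star\cM,
\]
which is $\cA^\Star_{\wt X(f)}\otimes\varpif^{-1}(\Omega^k_X\otimes\cM)$. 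One then checks that the Spencer differentials are sent to the connection $\nabla$, exactly as in the classical identification $\DR\cM\simeq\cHom_{\cD_X}(\Sp(\cD_X),\cM)$; this is a routine local computation. The shift by $\dim X$ is the usual one, fixed by the convention $\pDR$.

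\emph{Second isomorphism.} The terms of $\Sp(\cD^\Star_{\wt X(f)})$ are locally free over $\cD^\Star_{\wt X(f)}$, so $\cHom$ from this complex computes $\bR\cHom$. What remains is to see that $\Sp(\cD^\Star_{\wt X(f)})$ resolves $\cA^\Star_{\wt X(f)}$. Since $\Sp(\cD_X)\to\cO_X$ is a resolution by locally free, hence $\cO_X$-flat, modules, the complex $\varpif^{-1}\Sp(\cD_X)\to\varpif^{-1}\cO_X$ is acyclic and remains acyclic after applying $\cA^\Star_{\wt X(f)}\otimes_{\varpif^{-1}\cO_X}$. Here I would invoke the flatness statement \ref{subsec:sheavesblup1}: its role is to guarantee the identification
\[
\cA^\Star_{\wt X(f)}\otimes\varpif^{-1}(\cD_X\otimes\wedge^k\Theta_X)\simeq\cD^\Star_{\wt X(f)}\otimes\wedge^k\Theta_X,
\]
and that the augmentation goes to $\cA^\Star_{\wt X(f)}$.

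\emph{Third isomorphism.} I would use extension of scalars, $\bR\cHom_{\cD^\Star}(\cD^\Star\otimes^{\bL}_{\varpif^{-1}\cD_X}N,P)\simeq\bR\cHom_{\varpif^{-1}\cD_X}(N,P)$ with $N=\varpif^{-1}\cO_X$. One has $\cD^\Star\otimes_{\varpif^{-1}\cD_X}\varpif^{-1}\Sp(\cD_X)=\Sp(\cD^\Star)$, which resolves $\cA^\Star$ by the previous step. Hence $\cD^\Star\otimes^{\bL}_{\varpif^{-1}\cD_X}\varpif^{-1}\cO_X\simeq\cA^\Star_{\wt X(f)}$, and the isomorphism follows. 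Equivalently, and more concretely, $\varpif^{-1}\Sp(\cD_X)$ is a locally free resolution of $\varpif^{-1}\cO_X$ over $\varpif^{-1}\cD_X$, and $\cHom_{\varpif^{-1}\cD_X}(\varpif^{-1}\Sp(\cD_X),\varpif^\Star\cM)$ is term by term the same complex as in the first step.

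The only non-formal point is the flatness input \ref{subsec:sheavesblup1} used in the second step, together with the compatibility of the differentials. Everything else is standard adjunction.
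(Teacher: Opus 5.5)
Your argument is correct and is exactly the standard verification the paper has in mind; the paper writes out no proof and only says the lemma is ``obtained in a standard way''. One correction to your commentary: the flatness property \ref{subsec:sheavesblup1} is not needed anywhere here. The identification of the terms is just associativity of tensor products, and exactness of $\cA^\Star_{\wt X(f)}\otimes_{\varpif^{-1}\cO_X}\varpif^{-1}\Sp(\cD_X)\to\cA^\Star_{\wt X(f)}$ already follows, as you note yourself, from $\Sp(\cD_X)\to\cO_X$ being an exact bounded complex of flat $\cO_X$-modules. Flatness of $\cA^\Star_{\wt X(f)}$ only matters when the lemma is used to define $\pDR^\Star_{\wt X(f)}\cM$ for complexes $\cM$.
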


\begin{remarque}\label{rem:gammaf}
The good behavior of $\pDR_{\wt X(f)}^\Star\cM$ with respect to projective pushforwards is recalled in Corollary~\ref{cor:473}.
In particular, recall the graph inclusions $\gamma_f:X\hto X\times\CC$ and $\wt\gamma_f:\wt X(f)\hto X\times\wt\CC$. Then $\bR\wt\gamma_{f*}\pDR^\Star_{\wt X(f)}(\cM)\simeq\pDR^\Star_{X\times\wt\CC}(\Dm\gamma_{f*}\cM)$.
\end{remarque}

The de~Rham complexes with $\Star\in\{\ssup\rmod,\ssup\rrd,\rmodrd\}$ are supported on $\partial\wt X(f)$. We~have the following natural distinguished triangles
\begin{equation}\label{eq:DRStar}
\begin{gathered}
\pDR_{\wt X(f)}^\rmod\cM\to\pDR_{\wt X(f)}^*\cM\to\pDR_{\wt X(f)}^\srmod\cM\To{+1}\\
\pDR_{\wt X(f)}^\rrd\cM\to\pDR_{\wt X(f)}^*\cM\to\pDR_{\wt X(f)}^\srrd\cM\To{+1}
\end{gathered}
\end{equation}
and
\begin{equation}\label{eq:DRmodrd}
\pDR_{\wt X(f)}^\rrd\cM\to\pDR_{\wt X(f)}^\rmod\cM\to\pDR_{\wt X(f)}^\rmodrd\cM\To{+1}.
\end{equation}

\begin{proposition}\label{prop:*jf*}
If $\cM$ is an object of $\catD^\rb_\coh(\cD_X)$, then the natural morphism
\[
\pDR_{\wt X(f)}^*\cM\to\bR\wtjf{}_*\jf^{-1}\pDR_X\cM
\]
is an isomorphism.
\end{proposition}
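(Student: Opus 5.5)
We reduce to the case of a single coherent $\cD_X$-module by a standard way-out argument: both sides are triangulated functors of $\cM$ in $\catD^\rb_\coh(\cD_X)$, and the question is local on $X$. Locally, any object of $\catD^\rb_\coh(\cD_X)$ is isomorphic to a bounded complex of free $\cD_X$-modules of finite rank. It therefore suffices to treat the case $\cM=\cD_X$, provided the morphism is compatible with the differentials and cones. This compatibility holds because it is defined functorially: we restrict $\cA^*_{\wt X(f)}\otimes\varpif^{-1}(\Omega^\cbbullet_X\otimes\cM)$ to $X^*$ via $\wtjf^{-1}$, and then use the adjunction $\id\to\bR\wtjf{}_*\wtjf^{-1}$, noting that $\wtjf^{-1}\cA^*_{\wt X(f)}=\cO_{X^*}$.

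For $\cM=\cD_X$, the de~Rham complex $\DR_X\cD_X$ is a resolution of the right module $\omega_X$ (up to shift), with terms $\Omega^k_X\otimes\cD_X$. Similarly, $\DR^*_{\wt X(f)}\cD_X=\cA^*_{\wt X(f)}\otimes_{\varpif^{-1}\cO_X}\varpif^{-1}(\Omega^\cbbullet_X\otimes\cD_X)$. It is more convenient to avoid computing this directly and instead argue termwise. For each locally free $\cO_X$-module $\cE$ (here $\Omega^k_X\otimes_{\cO_X}\cD_X$, an inductive limit of free $\cO_X$-modules of finite rank), we compare $\cA^*_{\wt X(f)}\otimes\varpif^{-1}\cE$ with $\bR\wtjf{}_*(\cO_{X^*}\otimes\jf^{-1}\cE)$. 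The key input is \ref{subsec:sheavesblup0}: $\bR\wtjf{}_*\cO_{X^*}=\wtjf{}_*\cO_{X^*}=\cA^*_{\wt X(f)}$. The projection formula for $\wtjf$ with the locally free sheaf $\varpif^{-1}\cE$ gives the termwise isomorphism. For the infinite rank case, we use that the Stein neighbourhoods $\wt V$ used in \ref{subsec:sheavesblup0} have $\wt V\cap X^*$ Stein. Hence the higher cohomology of $\cO_{X^*}\otimes\cE$ on them vanishes, since $\cE$ is an inductive limit of coherent sheaves and cohomology on Stein spaces commutes with such limits (working on a countable cofinal system). Thus $\bR\wtjf{}_*$ of each term is concentrated in degree $0$ and equals the corresponding term of $\DR^*_{\wt X(f)}$. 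Consequently, $\bR\wtjf{}_*$ of the complex $\jf^{-1}\DR_X\cD_X$ is computed termwise, and we conclude for $\cD_X$.

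Alternatively, and more cleanly, one can use Lemma~\ref{lem:41}: $\pDR^*_{\wt X(f)}\cM\simeq\bR\cHom_{\varpif^{-1}\cD_X}(\varpif^{-1}\cO_X,\cA^*_{\wt X(f)}\otimes\varpif^{-1}\cM)$. By the above, this is $\bR\wtjf{}_*$ of $\bR\cHom_{\cD_{X^*}}(\cO_{X^*},\cM|_{X^*})$, using that $\bR\cHom$ commutes with $\bR\wtjf{}_*$ via adjunction once we know $\cA^*_{\wt X(f)}\otimes\varpif^{-1}\cM\simeq\bR\wtjf{}_*\jf^{-1}\cM$ for $\cM$ locally free over $\cD_X$.

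The main obstacle is the termwise acyclicity for the non-coherent $\cO_X$-modules $\Omega^k_X\otimes\cD_X$, that is, commuting $\bR\wtjf{}_*$ with the inductive limit. I would handle it using the fundamental system of Stein neighbourhoods $\wt V$ from \ref{subsec:sheavesblup0}: on these neighbourhoods, Theorem B applies to each coherent piece $F_p\cD_X$, and cohomology commutes with filtered colimits on the (paracompact, locally compact) spaces involved.
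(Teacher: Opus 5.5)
Your reduction to $\cM=\cD_X$ is harmless, but the step you then rely on is false. The sheaves $\jf^{-1}(\Omega^k_X\otimes\cD_X)$ are countable direct sums of locally free $\cO_{X^*}$-modules, and they are \emph{not} $\wtjf{}_*$-acyclic. Commutation of $H^j$ with filtered inductive limits requires compactness, and the Stein sets $\wt V\cap X^*$ are not compact; this is exactly the caveat stated in the paper's proof.

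Concretely, take $X=\CC$ and $f=z$, so that $\jf^{-1}\cD_X\simeq\bigoplus_{k\geq0}\cO_{X^*}\partial_z^k$. Fix $\theta_o$ and set $a_k=k^{-1}e^{i\theta_o}$. Let $\chi_k$ be a smooth cut-off vanishing for $|z|\leq k^{-1}+\delta_k/2$ and equal to $1$ for $|z|\geq k^{-1}+\delta_k$, with $\delta_k$ small enough that these annuli are disjoint. Then $v_k=\chi_k/(z-a_k)$ is smooth, and $(\omega_k)=(\bar\partial v_k)$ is a locally finite family on $X^*$, \ie a cocycle in the (fine) Dolbeault resolution of $\bigoplus_k\cO_{X^*}$. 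Suppose that on some sector $S'=\{0<|z|<r,\,|\arg z-\theta_o|<\epsilon\}$ there were a locally finite family $(u_k)$ with $\bar\partial u_k=\omega_k$. Then $h_k=u_k-v_k$ would be holomorphic on $S'$. For $k\gg0$ it would equal $-1/(z-a_k)$ on a fixed small disc of $S'$ (where $u_k=0$ and $\chi_k=1$), hence on $S'\moins\{a_k\}$. Since $a_k\in S'$ for $k\gg0$, this is absurd.

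Thus $R^1\wtjf{}_*\jf^{-1}\cD_X\neq0$ at $(0,\theta_o)$. So $\bR\wtjf{}_*\jf^{-1}\DR_X\cD_X$ cannot be computed termwise as you claim. Your alternative route through Lemma~\ref{lem:41} fails for the same reason, since its input $\cA^*_{\wt X(f)}\otimes\varpif^{-1}\cD_X\simeq\bR\wtjf{}_*\jf^{-1}\cD_X$ is false.

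The missing idea is to avoid applying $\bR\wtjf{}_*$ to non-coherent terms at all. The paper chooses a coherent filtration $F_\bbullet\cM$ and uses Malgrange's result that $F_p\DR_X\cM\to\DR_X\cM$ is a quasi-isomorphism for $p\gg0$ (locally). The terms of $F_p\DR_X\cM$ are $\cO_X$-coherent, so Theorem~B on the sets $\wt V\cap X^*$ makes their restrictions to $X^*$ $\wtjf{}_*$-acyclic. It also identifies $\cA^*_{\wt X(f)}\otimes\varpif^{-1}(\cdot)$ with $\wtjf{}_*\jf^{-1}(\cdot)$ on them. This gives $F_p\DR^*_{\wt X(f)}\cM\simeq\bR\wtjf{}_*\jf^{-1}F_p\DR_X\cM\simeq\bR\wtjf{}_*\jf^{-1}\DR_X\cM$ for $p\gg0$.

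The $\cA^*$-side is a tensor product, so it does commute with the colimit over $p$. One then checks that $F_p\DR^*_{\wt X(f)}\cM\to\DR^*_{\wt X(f)}\cM$ is a quasi-isomorphism, because its graded pieces are $\wtjf{}_*\jf^{-1}\gr^F_p\DR_X\cM$, which are acyclic for $p\gg0$. Applying this to $\cD_X$ with its order filtration is what would repair your argument.
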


\begin{proof}
It is enough to prove the result for a coherent $\cD_X$-module, and the assertion is local. We~will work with the unshifted de~Rham complex $\DR$. We~first note, in~a way similar to \eqref{subsec:sheavesblup0}, that for each $\cO_X$-coherent sheaf~$\cF$, we~have $\bR\wtjf{}_*j_f^{-1}\cF=\wtjf{}_*j_f^{-1}\cF$. (Note however that, since $\wtjf$ is not proper, such an isomorphism does not a priori hold for inductive limits of coherent $\cO_X$-modules.) Then, by using a local resolution of $\cF$ by free $\cO_X$-modules of finite rank, one checks that the natural morphism
\[
\wtjf{}_*\cO_{X^*}\otimes_{\varpif^{-1}\cO_X}\varpif^{-1}\cF\to\wtjf{}_*j_f^{-1}\cF
\]
is an isomorphism. In~order to obtain a similar isomorphism for the de~Rham complex of a coherent $\cD_X$-module $\cM$, we~will make use of a coherent filtration. Let us then choose a local coherent filtration $F_\bbullet\cM$ and filter the de~Rham complex by
\[
F_p\DR_X\cM:=\{F_p\cM\ra\Omega^1_X\otimes F_{p+1}\cM\ra\cdots\}.
\]
It is known that, for $p\gg0$ locally, the natural morphism $F_p\DR_X\cM\to\DR_X\cM$ is a quasi-isomorphism: this is proved in \cite[p.\,55]{malgrange85} for the Spencer complex of a right $\cD_X$-module equipped with a coherent filtration, and the result follows by side-changing. In~particular, the graded complex $\gr^F_p\DR_X\cM$ is quasi-isomorphic to zero locally for $p\gg0$.

We also consider the following filtered complexes:
\begin{align*}
F_p\DR_{\wt X(f)}(\wtjf{}_*\cO_{X^*}\otimes\varpif^{-1}\cM)&:=\{\wtjf{}_*\cO_{X^*}\otimes\varpif^{-1}F_p\cM\\
&\hspace*{2cm}\ra \wtjf{}_*\cO_{X^*}\otimes\varpif^{-1}(\Omega^1_X\otimes F_{p+1}\cM)\ra\cdots\},\\
F_p\DR_{\wt X(f)}(\wtjf{}_*\jf{}^{-1}\cM)&:=\{\wtjf{}_*\jf{}^{-1}F_p\cM\ra\varpif^{-1}\Omega^1_X\otimes \wtjf{}_*\jf{}^{-1}F_{p+1}\cM\ra\cdots\}\\
&{}\phantom{:}\simeq\{\wtjf{}_*\jf{}^{-1}F_p\cM\ra\wtjf{}_*\jf{}^{-1}(\Omega^1_X\otimes F_{p+1}\cM)\ra\cdots\}\\
&{}\phantom{:}=:\wtjf{}_*\jf{}^{-1}F_p\DR_X\cM,
\end{align*}
and $\bR\wtjf{}_*\jf{}^{-1}F_p\DR_X\cM$. Since each term of the complex $\jf{}^{-1}F_p\DR_X\cM$ is $\wtjf{}_*$\nobreakdash-acy\-clic (and since the category of sheaves has enough injectives), the natural morphism $\wtjf{}_*\jf{}^{-1}F_p\DR_X\cM\to\bR\wtjf{}_*\jf{}^{-1}F_p\DR_X\cM$ is an isomorphism. We~thus obtain a commutative diagram:
\[
\xymatrix@R=.5cm{
F_p\DR_{\wt X(f)}(\wtjf{}_*\cO_{X^*}\otimes\varpif^{-1}\cM)\ar[r]\ar[d]_\wr&\DR_{\wt X(f)}(\wtjf{}_*\cO_{X^*}\otimes\varpif^{-1}\cM)\ar[ddd]\\
F_p\DR_{\wt X(f)}(\wtjf{}_*\jf{}^{-1}\cM)\ar[d]_\wr\\
\wtjf{}_*\jf{}^{-1}F_p\DR_X\cM\ar[d]_\wr\\
\bR\wtjf{}_*\jf{}^{-1}F_p\DR_X\cM\ar[r]^-\sim_-{p\gg0}&\bR\wtjf{}_*\jf{}^{-1}\DR_X\cM
}
\]
and the right vertical morphism is an isomorphism if and only if the upper horizontal one is so. For the latter, it~is enough to show that for $p\gg0$,
\[
\gr^F_p\DR_{\wt X(f)}(\wtjf{}_*\cO_{X^*}\otimes\varpif^{-1}\cM)\simeq0.
\]
By the left vertical isomorphisms in the above diagram, this complex is identified with $\bR\wtjf{}_*\jf{}^{-1}\gr^F_p\DR_X\cM$, hence is zero locally for $p\gg0$, since $\gr^F_p\DR_X\cM$ is so.
\end{proof}

We note that
\[
\wtjf{}^{-1}\pDR_{\wt X(f)}^\rrd\cM=\wtjf{}^{-1}\pDR_{\wt X(f)}^\rmod\cM=\wtjf{}^{-1}\pDR_{\wt X(f)}^*\cM=\wtjf{}^{-1}\pDR_{\wt X(f)}\cM.
\]
As a consequence, for $\cM$ in $\catD^\rb_\coh(\cD_X)$, we~can identify (possibly non-functorially, due to non-uniqueness in the cone construction) the natural distinguished triangles \eqref{eq:DRStar} with the distinguished triangles
\begin{equation}\label{eq:DRStarbis}
\begin{gathered}
\pDR_{\wt X(f)}^\rmod\cM\to\bR\wtjf{}_*\wtjf{}^{-1}\pDR_{\wt X(f)}^\rmod\cM\to\bR\wtif{}_*\wtif{}^!\pDR_{\wt X(f)}^\rmod\cM[1]\To{+1}\\
\pDR_{\wt X(f)}^\rrd\cM\to\bR\wtjf{}_*\wtjf{}^{-1}\pDR_{\wt X(f)}^\rrd\cM\to\bR\wtif{}_*\wtif{}^!\pDR_{\wt X(f)}^\rrd\cM[1]\To{+1}.
\end{gathered}
\end{equation}

Note however that there exist functorial isomorphisms in $\catD^\rb(\CC_{\wt X(f)})$:
\begin{equation}\label{eq:>rdrd}
\begin{aligned}
\pDR_{\wt X(f)}^\srmod\cM&\isom\bR\wtif{}_*\wtif{}^!\pDR_{\wt X(f)}^\rmod\cM[1],\\
\pDR_{\wt X(f)}^\srrd\cM&\isom\bR\wtif{}_*\wtif{}^!\pDR_{\wt X(f)}^\rrd\cM[1].
\end{aligned}
\end{equation}
Indeed, since $\pDR_{\wt X(f)}^\srmod\cM$ is supported on $\partial\wt X(f)$, the natural morphism
\[
\bR\wtif{}_*\wtif{}^!\pDR_{\wt X(f)}^\srmod\cM\to\pDR_{\wt X(f)}^\srmod\cM
\]
is an isomorphism. Furthermore, since $\bR\wtif{}_*\wtif{}^!\pDR_{\wt X(f)}^*\cM\simeq0$ according to Proposition~\ref{prop:*jf*}, the morphism induced by that in the first line of \eqref{eq:DRStar}:
\[
\bR\wtif{}_*\wtif{}^!\pDR_{\wt X(f)}^\srmod\cM\to\bR\wtif{}_*\wtif{}^!\pDR_{\wt X(f)}^\rmod\cM[1]
\]
is also an isomorphism. We~argue similarly with $\pDR_{\wt X(f)}^\srrd\cM$.\qed

\smallskip
From Properties \eqref{subsec:sheavesblup5} and \eqref{subsec:sheavesblup6}, and due to the flatness property \eqref{subsec:sheavesblup1}, we~obtain the following lemma.

\begin{lemme}\label{lem:48}
For a $\cD_X$-module $\cM$ we have
\begin{align*}
\bR\varpif{}_*\pDR_{\wt X(f)}^*\cM&\simeq\bR\jf{}_*\jf^{-1}\pDR_X\cM,
\\
\bR\varpif{}_*\pDR_{\wt X(f)}^\rmod\cM&\simeq\pDR_X(\cM(*X_0)),
\\
\bR\varpif{}_*\pDR_{\wt X(f)}^\rrd\cM&\simeq\textup{Cone}\bigl[\pDR_X\cM\ra\pDR(\cO_{\wh{X|X_0}}\otimes_{\cO_X}\cM)\bigr][-1]
\\
&\hspace*{-.5cm} =\textup{Cone}\bigl[\pDR_X(\cM(*X_0))\ra\pDR(\cO_{\wh{X|X_0}}\otimes_{\cO_X}\cM(*X_0))\bigr][-1],
\\
\bR\varpif{}_*\pDR_{\wt X(f)}^\rmodrd\cM&\simeq\pDR(\cO_{\wh{X|X_0}}\otimes_{\cO_X}\cM(*X_0)),\\
\bR\varpif{}_*\pDR_{\wt X(f)}^\srmod\cM&\simeq\bR\iif{}_*\iif{}^!\pDR_X(\cM(*X_0))[1].
\end{align*}
\end{lemme}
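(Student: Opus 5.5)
The plan is to prove the statement for a single $\cD_X$-module $\cM$ by combining Lemma~\ref{lem:41}, the flatness property~\ref{subsec:sheavesblup1} and the projection formula for the proper map $\varpif$. For every $\Star$, Lemma~\ref{lem:41} writes $\pDR^\Star_{\wt X(f)}\cM$ as the complex $\cA^\Star_{\wt X(f)}\otimes_{\varpif^{-1}\cO_X}\varpif^{-1}(\Omega^{\cbbullet+\dim X}_X\otimes\cM)$, with a differential that is a differential operator. Because $\cA^\Star_{\wt X(f)}$ is $\varpif^{-1}\cO_X$-flat (\ref{subsec:sheavesblup1}), each term is a derived tensor product. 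The projection formula for the proper map $\varpif$ \cite[Prop.\,2.6.6]{K-S90} then gives, term by term,
\[
\bR\varpif{}_*\bigl(\cA^\Star_{\wt X(f)}\otimes_{\varpif^{-1}\cO_X}\varpif^{-1}(\Omega^k_X\otimes\cM)\bigr)\simeq\bR\varpif{}_*\cA^\Star_{\wt X(f)}\overset{\bL}\otimes_{\cO_X}(\Omega^k_X\otimes\cM).
\]
These identifications are compatible with the differentials $\nabla$, since the $\cD_X$-action on $\cA^\Star$ is transported along them. To make this rigorous I would work with the filtered/stupid truncation of the bounded de~Rham complex, or equivalently use the third expression in Lemma~\ref{lem:41} and commute $\bR\varpif{}_*$ with $\bR\cHom_{\varpif^{-1}\cD_X}(\varpif^{-1}\cO_X,-)$ via adjunction.

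Next I plug in the computations of $\bR\varpif{}_*\cA^\Star$ one case at a time:
\begin{itemize}
\item for $\Star=\rmod$, \ref{subsec:sheavesblup5} gives $\cO_X(*X_0)$, hence $\pDR_X(\cM(*X_0))$;
\item for $\Star=\rrd$, \ref{subsec:sheavesblup6} gives the two-term complex $\cO_X\to\cO_{\wh{X|X_0}}$, or equivalently its localized version, hence the cone expression;
\item for $\Star=\rmodrd$, \ref{subsec:sheavesblup7} gives $\cO_{\wh{X|X_0}}(*X_0)$.
\end{itemize}
The derived tensor products are underived here because $\cO_X(*X_0)$ and $\cO_{\wh{X|X_0}}$ are $\cO_X$-flat.

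The case $\Star=*$ does not need the projection formula. By Proposition~\ref{prop:*jf*}, $\pDR^*_{\wt X(f)}\cM\simeq\bR\wtjf{}_*\jf^{-1}\pDR_X\cM$ for coherent $\cM$. Applying $\bR\varpif{}_*$ and using $\varpif\circ\wtjf=\jf$ gives $\bR\jf{}_*\jf^{-1}\pDR_X\cM$. If $\cM$ is not coherent, I would instead use $\bR\varpif{}_*\cA^*_{\wt X(f)}=\bR\jf{}_*\cO_{X^*}$, obtained from \ref{subsec:sheavesblup0}, and argue on a presentation.

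For $\Star=\ssup\rmod$, I apply $\bR\varpif{}_*$ to the first triangle of~\eqref{eq:DRStar}. This gives
\[
\pDR_X(\cM(*X_0))\to\bR\jf{}_*\jf^{-1}\pDR_X\cM\to\bR\varpif{}_*\pDR^\srmod_{\wt X(f)}\cM\To{+1}.
\]
The middle term equals $\bR\jf{}_*\jf^{-1}\pDR_X(\cM(*X_0))$. Comparing with the standard triangle $\bR\iif{}_*\iif^!\to\id\to\bR\jf{}_*\jf^{-1}$ applied to $\pDR_X(\cM(*X_0))$ identifies the third term with $\bR\iif{}_*\iif^!\pDR_X(\cM(*X_0))[1]$.

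The main obstacle is making the last identification functorial rather than a mere cone identification. I would handle it as in~\eqref{eq:>rdrd}: first apply $\bR\varpif{}_*$ to that isomorphism, then use base change $\bR\varpif{}_*\bR\wtif{}_*\wtif^!=\bR\iif{}_*\iif^!\bR\varpif{}_*$, which holds because $\varpif$ is proper and $\wtif^!$ commutes with proper pushforward. Together with the $\rmod$ case this yields the canonical isomorphism. A secondary point is the compatibility of the projection formula with the differential operators in the de~Rham complex, which the $\bR\cHom$ form of Lemma~\ref{lem:41} handles cleanly.
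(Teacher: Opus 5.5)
Your proposal is correct for coherent $\cM$, and its core is exactly the paper's proof. The paper uses flatness \ref{subsec:sheavesblup1} and the projection formula for the proper map $\varpif$, then substitutes \ref{subsec:sheavesblup5}--\ref{subsec:sheavesblup7}, and writes nothing more.

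You go further on the two lines the paper leaves implicit.
\begin{itemize}
\item For $\Star=*$ you invoke Proposition~\ref{prop:*jf*} together with $\varpif\circ\wtjf=\jf$. Some such argument is needed, because the terms $\Omega^k_X\otimes\cM$ are not $\cO_X$-coherent, so the projection formula alone does not produce $\bR\jf{}_*\jf^{-1}\pDR_X\cM$.
\item For $\Star=\ssup\rmod$ you push forward \eqref{eq:>rdrd} and use the base change $\bR\varpif{}_*\bR\wtif{}_*\wtif^!\simeq\bR\iif{}_*\iif^!\bR\varpif{}_*$. This yields a functorial isomorphism rather than an identification of cones, which is an improvement.
\end{itemize}

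One correction. These two lines genuinely require $\cM$ to be $\cD_X$-coherent: Proposition~\ref{prop:*jf*} is stated on $\catD^\rb_\coh(\cD_X)$, and \eqref{eq:>rdrd} depends on it. Your fallback of ``arguing on a presentation'' for non-coherent $\cM$ cannot work, because both formulas are then false.

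Take $X=\CC$, $f=x$ and $\cM=\bigoplus_{n\geq1}\cD_X/\cD_X(x-1/n)$. Since $\cM_0=0$, the complex $\pDR^*_{\wt X(f)}\cM$ vanishes along $\partial\wt X(f)$. Hence the stalk at $0$ of $\cH^0\bR\varpif{}_*\pDR^*_{\wt X(f)}\cM$ is $\varinjlim_r\bigoplus_{1/n<r}\CC=0$. On the other hand, the stalk at $0$ of $\cH^0\bR\jf{}_*\jf^{-1}\pDR_X\cM$ is $\varinjlim_r\prod_{1/n<r}\CC\neq0$. The same example breaks the $\ssup\rmod$ line: its left-hand side is $0$, while $\iif^!\pDR_X\cM\neq0$.

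The paper's one-line proof is equally silent on this. The lemma should be read for coherent (in practice holonomic) $\cM$, which is the only way it is used later. The $\rmod$, $\rrd$ and $\rmodrd$ lines do hold for arbitrary $\cM$ by your argument.
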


\begin{proof}
By the projection formula \cite[Prop.\,2.6.6]{K-S90} together with flatness \ref{subsec:sheavesblup1}, we~obtain
\[
\bR\varpi_{f*}\bigl(\cA_{\wt X(f)}^\Star\otimes_{\varpi_f^{-1}\cO_X}\varpi_f^{-1}\cM\bigr)=\bR\varpi_{f*}\bigl(\cA_{\wt X(f)}^\Star\otimes^{\bL}_{\varpi_f^{-1}\cO_X}\varpi_f^{-1}\cM\bigr)\simeq \bR\varpi_{f*}\cA_{\wt X(f)}^\Star\otimes^{\bL}_{\cO_X}\cM,
\]
and the desired formulas follow from the formulas already established for $\bR\varpi_{f*}\cA_{\wt X(f)}^\Star$.
\end{proof}

\begin{theoreme}\label{th:DRmodRconst}
Let $\cM$ be a holonomic $\cD_X$-module. Then $\pDR_{\wt X(f)}^\Star\cM$ belongs to $\catD^\rb_{\RR\textup{-c}}(\CC_{\wt X(f)})$.
\end{theoreme}

\begin{proof}
The case $\Star=*$ will follow from Theorem~\ref{th:q0-1} and we postpone its proof. It~is then enough to prove the cases $\Star=\rmod$ and $\Star=\rrd$. Recall that, according to the microlocal characterization of constructibility of \cite[Th.\,8.4.2]{K-S90}, the notion of $\RR$-constructibility is local. By~a standard `dévissage' (possible due to Corollary~\ref{cor:473}), we~can assume that $\cM$ is a meromorphic flat bundle on~$X$ with pole divisor $P$ containing~$X_0$. As we can work locally on $X$, we~can find a projective modification such that the pole divisor of the pull-back connection has only normal crossings with smooth components. Moreover, according to the theorem of Kedlaya and Mochizuki (\cf\cite{Kedlaya10}, and \cite{Mochizuki09} in the algebraic case), up to blowing-up more, we~can also assume that the pull-back connection has a \emph{good formal structure} along its normal crossing pole set~$D$. We~denote by $e:(Y,D)\to(X,P)$ the projective modification thus obtained and we set $g=f\circ e$, $D_g=g^{-1}(0)=e^{-1}(X_0)\subset D$ (it is the union of some components of $D$). We~consider the commutative diagram
\begin{equation}\label{eq:blowtilde}
\begin{array}{c}
\xymatrix@C=1.2cm{
\wt Y(D)\ar[r]^-{\varpi_{D,D_g}}\ar@/_1pc/[rr]_(.6){\varpi_{D,g}}
\ar@<1ex>@/^2pc/[rrr]^-{\wt\epsilon}\ar@/_.8pc/[drr]_{\varpi_D}
&\wt Y(D_g)\ar[r]^-{\varpi_{D_g,g}}&\wt Y(g)\ar[r]^-{\wt e}\ar[d]_{\varpi_g}&\wt X(f)\ar[d]^{\varpi_f}\\
&&Y\ar[r]^-e&X
}
\end{array}
\end{equation}
The spaces $\wt Y(D)$ and $\wt Y(D_g)$, which are defined in Section~\ref{subsec:realblowupD}, are complex manifolds with corners. We~set $D=D_g\cup D'$, where $D'$ has no common component with $D_g$. It~will be useful to distinguish between the behaviors along $D_g$ and $D'$.

Let $\cM':=\Dm e^*\cM$ be the pull-back meromorphic flat bundle on $Y$. Then one can check that $\cM$ is recovered from $\cM'$ by pushforward (in the sense of $\cD$-modules): $\cM=\Dm e_*\cM'$.

\begin{remarque}\label{rem:Nilssonmod}
In \cite{Mochizuki10}, the de~Rham functors $\pDR_{\wt Y(D)}^{\leq D}, \pDR_{\wt Y(D)}^{<D_g,\leq D'}$ etc. are considered, where the symbol $\leq D$ refers to coefficients in the Nilsson class, and $<D$ to rapid decay coefficients. On the other hand, we~will consider the de~Rham functors $\pDR_{\wt Y(D)}^{\rmod D}, \pDR_{\wt Y(D)}^{\rrd D_g,\rmod D'}$ etc., with the more general condition of moderate growth. This is not problematic since we only consider these functors when applied to meromorphic flat bundles that have a \emph{good formal structure} along $(Y,D)$. In~this case, the natural morphism between the corresponding de~Rham functors is an isomorphism: the identification of the $\cH^0$s is easy in such a case, so that the assertion follows from \cite[Prop.\,5.1.3]{Mochizuki10} and Theorem~\ref{th:drmod} below.
\end{remarque}

We then have:\enlargethispage{\baselineskip}
\begin{equation}\label{eq:Dg}
\begin{split}
\pDR_{\wt Y(g)}^{\rmod}\cM'&\simeq\bR\varpi_{D,g*}\pDR_{\wt Y(D)}^{\rmod D}\cM',\\
\pDR_{\wt Y(g)}^{\rrd D_g}\cM'&\simeq\bR\varpi_{D,g*}\pDR_{\wt Y(D)}^{\rrd D_g,\rmod D'}\cM'.
\end{split}
\end{equation}

\begin{proof}\mbox{}
\begin{itemize}
\item
For the first line, by Lemma~\ref{lem:41} and flatness of $\cA_{\wt Y(D)}^\rmod$ over $\varpi_D^{-1}\cO_X$, we find
\begin{align*}
\bR\varpi_{D,g*}\pDR_{\wt Y(D)}^{\rmod D}\cM'&\simeq \bR\varpi_{D,g*}\bR\cHom_{\varpi_D^{-1}\cD_X}\bigl(\varpi_D^{-1}\cO_X,\varpi_D^\rmod\cM'\bigr)\\
&\simeq\bR\cHom_{\varpi_g^{-1}\cD_X}\bigl(\varpi_g^{-1}\cO_X,\bR\varpi_{D,g*}\varpi_D^\rmod\cM'\bigr)\;\;\text{\cite[(2.6.15)]{K-S90}}
\end{align*}
and by the same argument as in the proof of Lemma \ref{lem:48}, we have
\[
\bR\varpi_{D,g*}\varpi_D^\rmod\cM'\simeq\varpig^\rmod\cM'.
\]

\item
The second line is obtained by applying results of \cite{Mochizuki10} for Nilsson class functions, as we have not proved $\varpi_D^{-1}\cO_Y$-flatness of $\cA_{\wt Y(D)}^{\rrd D_g,\rmod D'}$, and we use the remark above on $\wt Y(D)$. We thus apply first \cite[Lem.\,5.1.6]{Mochizuki10} to the morphism $\varpi_{D,D_g}$, and then \cite[Prop.\,4.7.4]{Mochizuki10} to $\rho=\varpi_{D_g,g}$.\qedhere
\end{itemize}
\end{proof}

Since $\RR$-constructibility is stable under proper pushforward by a real analytic map between real analytic manifolds, we~conclude from \eqref{eq:Dg} and Corollary~\ref{cor:473} (due to the remarks in Section~\ref{sec:realblowup}), that it is enough to prove $\RR$-constructibility of $\pDR_{\wt Y(D)}^{\rmod D}\cM'$ and $\pDR_{\wt Y(D)}^{\rrd D_g,\rmod D'}\cM'$. The generalized Hukuhara-Turrittin theorem yields the following consequence (\cf \cite[\S12.d]{Bibi10}):

\begin{theoreme}\label{th:drmod}
Let $\cM'$ be a meromorphic flat bundle on $Y$ with poles along $D$. Assume moreover that $\cM'$ has a good formal structure along $D$. Then the complexes $\DR_{\wt Y(D)}^{\rrd D}\cM'$, $\DR_{\wt Y(D)}^{\rrd D_g,\rmod D'}\cM'$ and $\DR_{\wt Y(D)}^{\rmod D}\cM'$ have cohomology in degree zero at most, and their $\cH^0$ are nested subsheaves of the local system $\cH^0\DR_{\wt Y(D)}^*\cM'$, which are $\RR$-constructible with respect to any stratification on $\partial\wt Y(D)$ determined by the good stratified $\ccI$-covering associated with $\cM'$ (\cf Section~\ref{subsec:Icovering}).\qed
\end{theoreme}

This concludes the proof of Theorem~\ref{th:DRmodRconst} (modulo the case $\Star=*$).
\end{proof}

\begin{exemple}[Regular singularities]
Assume that $\cM$ is a regular holonomic $\cD_X$-module. Then, in the `dévissage' aforementioned, the meromorphic bundle with flat connection $\cM'$ has regular singularities along $D$. In~Theorem~\ref{th:drmod}, one finds that the sheaf $\cH^0\DR_{\wt Y(D)}^{\rrd D_g,\rmod D'}\cM'$ vanishes along $\varpi_D^{-1}(D_g)$ and is equal to the local system $\cH^0\DR_{\wt Y(D)}^*\cM'$ when restricted to $\varpi_D^{-1}(D\moins D_g)$. Similarly, $\cH^0\DR_{\wt Y(D)}^{\rmod D}\cM'$ is equal to $\cH^0\DR_{\wt Y(D)}^*\cM'$. One concludes that
\begin{align*}
\pDR_{\wt X(f)}^{\rmod}\cM&\simeq\pDR_{\wt X(f)}^*\cM=\bR\wtj_{f*}\,\wtj_f^{-1}\pDR_X\cM,\\
\pDR_{\wt X(f)}^{\rrd}\cM&\simeq\bR\wtj_{f!}\,\wtj_f^{-1}\pDR_X\cM,\\
\pDR_{\wt X(f)}^{\ssup\rmod}\cM&=0.
\end{align*}
\end{exemple}

\subsection{Duality properties}\label{subsec:dual}
In a first version of this article, the next theorem was a conjecture, which has been successfully proved by B.\,Hepler and A.\,Hohl in the beautiful paper \cite{H-H25}.

\begin{theoreme}[Behaviour with respect to duality, {\cite[Th.\,7.4]{H-H25}}]\label{conj:DRdual}
Let $\cM$ be a holonomic $\cD_X$-module.
\begin{enumerate}
\item\label{conj:DRdual1}
We have a functorial isomorphism
\[
\bD\pDR_{\wt X(f)}^\rmod\cM\simeq\pDR_{\wt X(f)}^\rrd\bD\cM,
\]
so that the dual of the distinguished triangle
\[
\pDR_{\wt X(f)}^\rmod\cM\to\pDR_{\wt X(f)}^*\cM\to\pDR_{\wt X(f)}^\srmod\cM\To{+1}
\]
is isomorphic to the natural triangle
\[
\bR\wtif{}_*\wtif{}^{-1}\pDR_{\wt X(f)}^\rrd\bD\cM[-1]\to\bR\wt j_{f!}\,\jf^{-1}\pDR_X\bD\cM\to\pDR_{\wt X(f)}^\rrd\bD\cM\To{+1}.
\]

\item\label{conj:DRdual2}
We have a functorial isomorphism
\[
\bD\pDR_{\wt X(f)}^\rrd\cM\simeq\pDR_{\wt X(f)}^\rmod\bD\cM,
\]
so that the dual of the distinguished triangle
\[
\pDR_{\wt X(f)}^\rrd\cM\to\pDR_{\wt X(f)}^*\cM\to\pDR_{\wt X(f)}^\srrd\cM\To{+1}
\]
is isomorphic to the natural triangle
\[
\bR\wtif{}_*\wtif{}^{-1}\pDR_{\wt X(f)}^\rmod\bD\cM[-1]\to\bR\wt j_{f!}\,\jf^{-1}\pDR_X\bD\cM\to\pDR_{\wt X(f)}^\rmod\bD\cM\To{+1}.
\]
\end{enumerate}
\end{theoreme}

In Appendix~\ref{app:C}, we~give a proof of Conjecture~\ref{conj:DRdual} in a special case, namely, in a local setting and when~$\cM$ is a meromorphic flat bundle on $X$, by applying results of \cite{Mochizuki10} (\cf Proposition~\ref{prop:duality}). However, functoriality of this isomorphism is lacking in order to obtain the general case, which has been obtained in \cite{H-H25}. As~the statement of \loccit\ only concerns the first line in \eqref{conj:DRdual1} and \eqref{conj:DRdual2}, let us make clear how to obtain the identifications of the distinguished triangles. We~treat the first case for example.

The natural morphism $\pDR_{\wt X(f)}^\rmod\cM\to\pDR_{\wt X(f)}^*$ is identified with the natural morphism $\pDR_{\wt X(f)}^\rmod\cM\to\bR\wtj{}_*\wtj^{-1}\pDR_{\wt X(f)}^\rmod\cM$, according to the identification of \eqref{eq:DRStar} with~\eqref{eq:DRStarbis}; thus,~the behavior of Poincaré-Verdier duality with respect to complementary open and closed inclusions yields the result. At this point, we~do not obtain functoriality, due to the cone construction in the latter identification.

\begin{corollaire}
Let $\cM$ be a holonomic $\cD_X$-module. We~have an isomorphism
\[
\bD\pDR_{\wt X(f)}^\rmodrd\cM\simeq(\pDR_{\wt X(f)}^\rmodrd\bD\cM)[-1].\eqno\qed	
\]
\end{corollaire}

\subsection{Non-characteristic properties}\label{subsec:noncar}
Let $p:X\to \Delta$ be a smooth holomorphic map to a disc $\Delta$ with coordinate $t$. We set $\Deltastar=\Delta\moins\{0\}$. For $s\in \Delta$, we~denote by $f_s:X_s\to\CC$ the function induced by $f$ on $X_s:=p^{-1}(s)$, and by $i_s:X_s\hto X$ and $\wti_s:\wt X_s(f_s)\hto\wt X(f)$ the inclusions.

\begin{proposition}\label{prop:noncharA}
For each $x_o\in p^{-1}(0)$ there exists an open neighborhood of $x_o$ in $X$ such that, up to shrinking $\Delta$, the following holds for each $s\in \Delta\moins\{0\}$:
\begin{enumerate}
\item\label{prop:noncharA1}
$\wt X_s(f_s)=\wt X(f)_{|X_s}$,
\item\label{prop:noncharA2}
$\cA^\Star_{\wt X_s(f_s)}=\wti{}_s^*\cA^\Star_{\wt X(f)}=\bL\wti{}_s^*\cA^\Star_{\wt X(f)}$.
\end{enumerate}
\end{proposition}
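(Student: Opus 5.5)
We will prove the two assertions in order, reducing everything to a transversality statement near $x_o$ and then to the graph-embedding description of the sheaves $\cA^\Star$ given in \ref{subsec:sheavesblup2}.

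\emph{Step 1: transversality.} First we choose the neighborhood. Consider the restriction $(p,f):X\to\Delta\times\CC$. Away from the critical locus of this map, the fibres $X_s$ are transversal to the level sets of $f$. By a curve-selection / Łojasiewicz type argument (a Milnor–Thom $a_f$-type condition), we can arrange the following: for $x_o$ fixed and a small neighborhood $U$, after shrinking $\Delta$, the map $(p,f)$ restricted to $U\cap p^{-1}(\Delta^*)$ has the property that $f_s$ has no critical points outside $f_s^{-1}(0)$ near $x_o$. More precisely, the relevant condition is that $\arg f$ restricted to $X_s$ stays regular near the boundary. The guiding idea is that the critical locus of $(p,f)$ is an analytic set whose image under $p$ is a proper analytic germ, hence, after shrinking, contained in $\{0\}$ for the part meeting $X_0$.

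\emph{Step 2: assertion \eqref{prop:noncharA1}.} The space $\wt X_s(f_s)$ is the closure in $X_s\times S^1$ of the graph of $f_s/|f_s|$. On the other hand, $\wt X(f)_{|X_s}$ is $\wt X(f)\cap(X_s\times S^1)$. The inclusion $\subset$ is clear. For the reverse inclusion, we must show that every point $(x,e^{i\theta})$ with $x\in X_s\cap X_0$ that is a limit of points of $X^*$ is also a limit of points of $X_s^*$. Using Step 1, $(p,f)$ is a submersion near $x$ (for $s\ne0$). Then locally $X\simeq X_s\times\Delta$ compatibly with $f$, so $f_s$ itself is a submersion at $x$. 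Consequently $f_s/|f_s|$ attains all arguments near $x$, and both sets equal $(X_s\cap X_0)\times S^1$ over $x$.

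\emph{Step 3: assertion \eqref{prop:noncharA2}.} By \ref{subsec:sheavesblup2}, $\cA^\Star_{\wt X(f)}=\varpi_f^{-1}\cO_X\otimes\wt\gamma_f^{-1}\cA^\Star_{X\times\wt\CC}$, and similarly for $X_s$. Hence it suffices to treat the model $X\times\wt\CC$, where restriction to $X_s\times\wt\CC$ is restriction along the smooth hypersurface $t=s$ transversal to the divisor. There, $\cA^\Star_{X\times\wt\CC}\otimes^{\bL}\cO_{X_s}$ is computed by the Koszul complex of $t-s$. We then show:
\begin{itemize}
\item injectivity of $t-s$ on $\cA^\Star$, which is clear since these are holomorphic functions on a dense open set;
\item $\cA^\Star/(t-s)\cA^\Star\simeq\cA^\Star_{X_s\times\wt\CC}$.
\end{itemize}
The second point amounts to extending a moderate (resp.\ rapid-decay) function from $X_s$ to a neighborhood with the same growth, which is done by pulling back along a local product projection. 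We also need division by $t-s$ preserving the growth conditions, and this follows from the Weierstrass/Hadamard formula $g(x,t)-g(x,s)=(t-s)\int_0^1\partial_tg\,d\lambda$ together with Cauchy estimates on sectors. The local product structure is exactly what Step 1 provides, and the same Koszul argument transfers through \ref{subsec:sheavesblup2} using flatness \ref{subsec:sheavesblup1}. The quotient sheaves ($\ssup\rmod$, etc.) follow by exactness.

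\emph{Main obstacle.} The main obstacle is Step 1: guaranteeing, uniformly near $x_o$ and up to shrinking $\Delta$, that for $s\ne0$ no new degeneration of $f_s$ along $X_0\cap X_s$ occurs. I would first try Sard plus properness of the critical set germ, and if that fails invoke existence of a Whitney stratification of $X_0$ adapted to $p$ and Thom's $a_f$-condition (Hironaka).
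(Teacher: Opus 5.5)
Your Step~1 is false, and Step~2 as written depends on it. The critical locus of $(p,f)$ contains every point of $X_0$ where $df=0$, that is, the singular locus of $(X_0)_{\mathrm{red}}$ and all of any multiple component. Its image under $p$ is in general all of $\Delta$. For instance, on $\CC^2$ with $p=t$ and $f=x^2$ one has $dp\wedge df=2x\,dt\wedge dx$. So $(p,f)$ is nowhere submersive along $X_0=\{x=0\}$, and $f_s=x^2$ is singular at $x=0$ for every $s$. The reduced example $f=xy$ on $\CC^3$ with $p=t$ behaves the same way along $\{x=y=0\}$. No Whitney or $a_f$ argument can rescue this, since the singularity of $f_s$ along $X_0\cap X_s$ is intrinsic. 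The transversality that does hold is the one the paper uses. After an embedded resolution $e:Y\to X$, properness allows one to shrink so that $p\circ e$ is a submersion on each stratum of the normal crossing divisor $D$ lying over $\Deltastar$. This is where $s\neq0$ enters the paper's proof.

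Fortunately, neither \eqref{prop:noncharA1} nor your Step~3 needs Step~1.

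For \eqref{prop:noncharA1}, it suffices that $f_s\not\equiv0$ near each point of $X_s$, which holds for $s\neq0$ after shrinking. A nonconstant holomorphic function is open, so $f_s/|f_s|$ takes all values of $S^1$ near any $x\in X_0\cap X_s$. Hence both sides have fibre $\{x\}\times S^1$ over such $x$.

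In Step~3 the product structure of the model is automatic. Write $\tau$ for the coordinate on the factor $\CC$ of $X\times\CC$, and $X=X'\times\Delta$ locally. Then $X_s\times\wt\CC=X'\times\{s\}\times\wt\CC$ is transversal to $\{\tau=0\}$ whatever $f$ is. So extension by pullback, and division by $t-s$ via Cauchy estimates in $t$, work for every $s$.

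What you should make explicit is the transfer. Consider the Koszul complex of the pair $(\tau-f,\,t-s)$ on $\cA^\Star_{X\times\wt\CC}$ and compute it in two orders.
\begin{itemize}
\item Taking $\tau-f$ first, Proposition~\ref{prop:451}\eqref{prop:4511} and Corollary~\ref{cor:453} for $(X,f)$ give the pushforward by $\wt\gamma_f$ of the Koszul complex of $t-s$ on $\cA^\Star_{\wt X(f)}$, i.e.\ of $\bL\wti_s^*\cA^\Star_{\wt X(f)}$.
\item Taking $t-s$ first, the model and then the same two results for $(X_s,f_s)$ give $\cA^\Star_{\wt X_s(f_s)}$ in degree $0$.
\end{itemize}
This uses two facts. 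First, $\gamma_f(X)$ and $X_s\times\CC$ meet transversally along $\gamma_{f_s}(X_s)$. Second, $\wt\gamma_f(\wt X(f))\cap(X_s\times\wt\CC)=\wt\gamma_{f_s}(\wt X_s(f_s))$, because \eqref{eq:diagcart} is Cartesian.

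With this repair your route genuinely differs from the paper's. The paper proves \eqref{prop:noncharA1} and \eqref{prop:noncharA2} upstairs on $\wt Y(D)$, in coordinates where $D=\{y_1\cdots y_\ell=0\}$ and $p\circ e=y_n$. It argues by the maximum principle as in \cite[Lem.\,4.4.1]{Mochizuki10}. It then descends to $\wt X(f)$ by properness of $\varpi_{D,g}$ and $\wt e$, together with \ref{subsec:sheavesblupD2} and \ref{eq:pusheA}, reusing the set-up needed anyway for Proposition~\ref{prop:XsurS}. Your route avoids resolution and any submersivity condition by using Tor-independence in the graph embedding. It uses $s\neq0$ only to ensure $f_s\not\equiv0$. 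The price is invoking Mochizuki's theorem \ref{subsec:sheavesblup2} also for the possibly very singular function $f_s$ on $X_s$.
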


\begin{proof}
We consider the setting and notation of the proof of Theorem~\ref{th:DRmodRconst}, and $X$ still denotes a small neighborhood of $x_o$. We~can thus assume that $p$ is a projection of a product $X_{s=0}\times\Delta\to\Delta$. Since $e$ is proper, we~can shrink $X$ and $\Delta$ so that, on each stratum of the natural stratification of $D$ that lies over $\Deltastar$, $p\circ e$ has rank one, \ie is a submersion from such a stratum to $\Deltastar$. Locally near a point of $Y\moins(p\circ e)^{-1}(0)$, we~can find local coordinates $(y_1,\dots,y_n)$ such that $D=\{y_1\cdots y_\ell=0\}$ ($\ell<n$) and $p\circ e=y_n$. Let us check that the assertions corresponding to \eqref{prop:noncharA1} and \eqref{prop:noncharA2} for $\wt Y(D)$ and $p\circ e$ are true in this local setting, hence all over $\wt Y(D)$. This is clear for \eqref{prop:noncharA1}. For \eqref{prop:noncharA2}, this is clear for $\cA^*_{\wt Y(D)}=\wtj_*\cO_{Y^*}$. We~can argue with the maximum principle as in \cite[Lem.\,4.4.1]{Mochizuki10} for $\cA_{\wt Y(D)}^\rmod$ and $\cA_{\wt Y(D)}^\rrd$. It~remains to show the injectivity of $t-s$ on $\wtj_*\cO_{Y^*}/\cA_{\wt Y(D)}^\rmod$, $\wtj_*\cO_{Y^*}/\cA_{\wt Y(D)}^\rrd$ and $\cA_{\wt Y(D)}^\rmod/\cA_{\wt Y(D)}^\rrd$. This is obtained by the same argument using the maximum principle.

By computation in these local coordinates, we note that, for $s\neq0$, $\wt Y_s(D_s)$ is the closure of $Y_s^*$ in $\wt Y(D)$, and we have a similar property for $\wt Y_s(g_s)$. Since $\varpi_{D,g}$ is proper, we~conclude that \eqref{prop:noncharA1} holds for $\wt Y(g)$. Using now the properness of $\wt e$, we~obtain similarly \eqref{prop:noncharA1} for $\wt X(f)$.

Now, \eqref{prop:noncharA2} for $\wt X(f)$ is obtained from \eqref{prop:noncharA2} for $\wt Y(D)$ by using \ref{subsec:sheavesblupD2} and \ref{eq:pusheA}.
\end{proof}

We consider the sheaf $\cD_{X/\Delta}$ of relative differential operators, which is a subsheaf of~$\cD_X$ and, for a holonomic $\cD_X$-module $\cM$, the relative de~Rham complex $\DR_{X/\Delta}\cM$, which is a complex of $p^{-1}\cO_\Delta$-modules. By~pulling it back to $\wt X(f)$ and tensoring the terms with $\cA_{\wt X(f)}^\Star$, we~obtain the relative $\Star$de~Rham complex $\DR^\Star_{\wt X(f)/\Delta}\cM$, which is a complex of $(p\circ\pi_f)^{-1}\cO_\Delta$-modules.

On the other hand, since $p$ is assumed to be a projection, $\DR^\Star_{\wt X(f)}\cM$ is the single complex associated with the double complex
\[
\DR^\Star_{\wt X(f)/\Delta}\cM\To{\partial_t}\DR^\Star_{\wt X(f)/\Delta}\cM,
\]
and the natural (vertical) morphism of complexes
\[
\xymatrix{
\DR^\Star_{\wt X(f)/\Delta}\cM\ar[r]^-{\partial_t}\ar@{=}[d]&\DR^\Star_{\wt X(f)/\Delta}\cM\ar[d]\\
\DR^\Star_{\wt X(f)/\Delta}\cM\ar[r]&0
}
\]
induces a $(p\circ\varpif)^{-1}\cO_\Delta$-linear morphism $(p\circ\varpif)^{-1}\cO_\Delta\otimes_\CC\DR^\Star_{\wt X(f)}\cM\to\DR^\Star_{\wt X(f)/\Delta}\cM$.

\begin{proposition}\label{prop:XsurS}
Let $\cM$ be a holonomic $\cD_X$-module. Up to shrinking $X$ (and restricting away from $p^{-1}(0)$), the natural morphism
\[
(p\circ\nobreak\varpif)^{-1}\cO_{\Deltastar}\otimes_\CC\DR^\Star_{\wt X(f)}\cM|_{\Deltastar}\to\DR^\Star_{\wt X(f)/\Delta}\cM|_{\Deltastar}
\]
is a quasi-isomorphism.
\end{proposition}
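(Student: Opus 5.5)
The idea is to reduce, by the same dévissage as in the proof of Theorem~\ref{th:DRmodRconst}, to a meromorphic flat bundle with good formal structure on a normal crossing pair. There the statement becomes the familiar fact that a flat bundle is locally trivial in a transverse direction. First, the morphism is compatible with triangles in $\cM$ and with projective pushforward. The $\Star$-de~Rham complexes commute with projective pushforward by Corollary~\ref{cor:473}, and the relative complexes do as well, by the same argument. This uses \ref{eq:pushpiA} together with the relative analogue of Proposition~\ref{prop:noncharA}. It therefore suffices to treat the case where $\cM$ is supported on a proper closed analytic subset, which we handle by induction on the dimension of the support using Kashiwara's equivalence, and the case where $\cM$ is a meromorphic flat bundle with poles along $P\supset X_0$. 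In the latter case we pass to $e:(Y,D)\to(X,P)$ as in \eqref{eq:blowtilde}, with $\cM'$ having a good formal structure. We shrink $X$ and $\Delta$ as in the proof of Proposition~\ref{prop:noncharA}, so that over $\Deltastar$ the map $p\circ e$ is a submersion on every stratum of $D$. Then, locally on $Y$ over $\Deltastar$, there are coordinates with $D=\{y_1\cdots y_\ell=0\}$ and $p\circ e=y_n$, where $\ell<n$. By \eqref{eq:Dg} and its relative analogue, it is enough to prove the statement on $\wt Y(D)$ for $\cA^{\rmod D}$ and $\cA^{\rrd D_g,\rmod D'}$ (and $*$).

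In this local product situation, $\wt Y(D)\simeq \wt Y_s(D_s)\times\Delta'$ near a point over $s\neq0$, with $t=y_n$ a holomorphic coordinate transverse to the boundary. By Theorem~\ref{th:drmod}, $\DR_{\wt Y(D)}^{\Star}\cM'$ is concentrated in degree $0$, and its $\cH^0$ is an $\RR$-constructible subsheaf $\cL^\Star$ of the local system of flat sections. The key claim is that the relative complex $\DR^\Star_{\wt Y(D)/\Delta}\cM'$ also has cohomology only in degree $0$, and that this cohomology equals $(p\circ\varpi_D)^{-1}\cO_\Delta\otimes_\CC\cL^\Star$. For this, fix $s$ and restrict to the slice $Y_s$. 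Since $D_s$ is normal crossing and $\cM'_{|Y_s}$ still has a good formal structure, Proposition~\ref{prop:noncharA}\eqref{prop:noncharA2} together with Theorem~\ref{th:drmod} on $Y_s$ gives the required concentration in degree $0$ fibrewise. The conclusion in families then follows in the same way as in the proof of the Hukuhara--Turrittin asymptotic theorem with holomorphic parameters. Concretely, there is a local splitting of $\cA^\Star\otimes\cM'$ on small multisectors, uniform in the parameter $t$; this is the generalized Hukuhara--Turrittin theorem in the form of \cite[\S12.d]{Bibi10}, which allows parameters along strata. Relative flat sections with condition $\Star$ are then $\cO_\Delta$-linear combinations of a basis of absolute flat sections with condition $\Star$ adapted to the Stokes filtration.

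Given this claim, the morphism in the statement is read off from the double complex $\DR^\Star_{/\Delta}\xrightarrow{\partial_t}\DR^\Star_{/\Delta}$. The map $\partial_t$ acting on $(p\circ\varpi_D)^{-1}\cO_\Delta\otimes\cL^\Star$ is just $\partial_t\otimes\id$, which is surjective with kernel $\cL^\Star$. So the single complex is $\cL^\Star$ in degree $0$, and the natural morphism induces $\cO_\Delta\otimes\cL^\Star\to\cO_\Delta\otimes\cL^\Star$, the identity. Pushing forward by $\varpi_{D,g}$ and $\wt e$ then concludes. The main obstacle I expect is the uniformity in $t$ of the asymptotic splitting: the Stokes directions must not move with $s$ in a way that breaks the product structure. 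I would address this first by using goodness and the purely monomial form of the irregular values $u(y')y'^{-\bmm}$, whose Stokes loci given in Section~\ref{subsec:Icovering} are locally trivial in $y_n$ when $\ell<n$.
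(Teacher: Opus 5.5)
Your reductions and your final double-complex computation agree with the paper. This covers the dévissage, the compatibility of the absolute and relative $\Star$-de~Rham complexes with projective pushforward and with \eqref{eq:Dg}, and the local coordinates with $p\circ e=y_n$. The heart of the matter, however, is not established: your ``key claim'' that $K:=\DR^\Star_{\wt Y(D)/\Delta}\cM'$ has no cohomology in degrees $k\geq1$.

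The fibrewise concentration you get from Proposition~\ref{prop:noncharA}\eqref{prop:noncharA2} and Theorem~\ref{th:drmod} on $Y_s$ only concerns $\bL\wti_s^*K$, which is the cone of $t-s$ acting on $K$, restricted to the slice. Its long exact sequence only yields divisibility: $t-s$ acts surjectively on the stalks of $\cH^kK$ ($k\geq1$) at points over $s$, and injectively for $k\geq2$. Adding Theorem~\ref{th:drmod} for the absolute complex, which is the total complex of $\partial_t:K\to K$, only tells you that $\partial_t$ is bijective on $\cH^kK$ for $k\geq1$. These sheaves are not finitely generated over $\cO_\Delta$, so no Nakayama-type argument turns such divisibility into vanishing; that finiteness is essentially what the proposition asserts. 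You also cannot invoke Corollary~\ref{cor:XsurS}, which is deduced from the proposition. Your ``concretely'' paragraph, describing relative flat sections as $\cO_\Delta$-combinations of absolute ones, only addresses $\cH^0$. Nothing in it solves $\nabla_{y_i}u=g$ in $\cA^\Star$ with holomorphic dependence on $t$, which is what vanishing in positive degrees requires.

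The missing ingredient is the one the paper uses, in three steps.
\begin{enumerate}
\item After a local ramification $\rho_{\bmd}$, one may assume a good formal decomposition, since $\cM'$ is a direct summand of $\Dm\rho_{\bmd*}\Dm\rho_{\bmd}^*\cM'$.
\item The generalized Hukuhara--Turrittin theorem then reduces the problem to $\cE^\varphi\otimes\cR$, with $\varphi$ purely monomial and $\cR$ regular of rank one.
\item Majima's asymptotic Poincaré lemmas are applied \emph{directly to the relative complex}, the non-divisor variable $y_n=t$ being a holomorphic parameter (\cite{Majima84}; \cf \cite[App.]{Bibi93} for rapid decay and \cite[App.]{Hien07} for moderate growth). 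This gives concentration in degree zero for both the relative and the absolute complexes.
\end{enumerate}
Only after this is $\cH^0$ computed by twisting with $e^{-\varphi}$, and that is where your $\cO_\Delta$-linear description of relative flat sections fits.

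Relatedly, the obstacle you single out, namely uniformity in $t$ of the splitting and motion of the Stokes directions, is not the real difficulty. Since $y_n$ is not a coordinate of $D$, the asymptotic decomposition on small multisectors of $\wt Y(D)$ already holds on neighbourhoods containing a $t$-disc. What is actually needed is the relative vanishing theorem.
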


\begin{corollaire}\label{cor:XsurS}
With the assumptions above, for $s\neq0$ we have
\[
\cH^k(\Dm i^*_s\cM)=0\quad \text{if $k\neq0$},
\]
and a functorial isomorphism
\[
\wti_s^{-1}\DR_{\wt X(f)}^\Star\cM\simeq \DR_{\wt X_s(f_s)}^\Star \Dm i^*_s\cM.
\]
\end{corollaire}

\begin{proof}
For $X$ small enough, $X_s$ is non-characteristic for $\cM$ if $s\neq0$, hence the first point. Then, according to Proposition~\ref{prop:noncharA},
\[
\DR_{\wt X_s(f_s)}^\Star \Dm i^*_s\cM\simeq \bL \wti_s^*\DR^\Star_{\wt X(f)/\Delta}\cM,\quad\text{if }s\neq0.
\]
We then conclude the proof by using Proposition~\ref{prop:XsurS}.
\end{proof}

\begin{remarque}\label{rem:XsurS}
By definition, $\pDR_{\wt X(f)}^\Star\cM$ has nonzero cohomology in non-positive degrees at most. On the other hand, we~claim that $\cH^0\pDR_{\wt X(f)}^\Star\cM=0$ away from the pull-back by $\varpif$ of a discrete set of points in $X$. Indeed, let $x_o\in X$ and let $p:\nb(x_o)\to \Delta$ be a smooth function defined in a neighborhood of $x_o$, which we still denote by $X$. Then Corollary~\ref{cor:XsurS} reads (in terms of the shifted de~Rham complex)
\[
i^{-1}_s\pDR_{\wt X(f)}^\Star\cM\simeq \pDR_{\wt X_s(f_s)}^\Star\cH^0(\Dm i^*_s\cM)[1]\quad\text{for $s\neq0$},
\]
hence the vanishing of $i^{-1}_s\cH^0\pDR_{\wt X(f)}^\Star\cM$ for $s\neq0$. One obtains the assertion by applying this to the projections along all coordinate hyperplanes centered at $x_o$.
\end{remarque}

\begin{proof}[\proofname\ of Proposition~\ref{prop:XsurS}]
Since the argument for proving Corollary~\ref{cor:473} relies on \cite[Th.\,4.1.5]{Mochizuki10}, one obtains that it holds for the relative $\Star$\,de~Rham complex, provided $p\circ\pi$ is smooth. Similarly, \eqref{eq:Dg} holds in the relative case provided $p\circ e$ is smooth. The smoothness assumption holds when we restrict to $\Deltastar$ possibly shrunk, if $(X,x_o)$ is small enough.

We take up the setting and notation of the proof of Theorem~\ref{th:DRmodRconst}, in particular as indicated in the diagram \eqref{eq:blowtilde}. Then, according to the preliminary remark above, we~have (away from $p^{-1}(0)$ for the second line):
\begin{align*}
\bR\wt \epsilon_*\DR_{\wt Y(D)}^\Star\cM'&\simeq \DR_{\wt X(f)}^\Star \Dm e_*\cM',\\
\bR\wt \epsilon_*\DR_{\wt Y(D)/\Delta}^\Star\cM'|_{\Deltastar}&\simeq \DR_{\wt X(f)/\Delta}^\Star \Dm e_*\cM'|_{\Deltastar}.
\end{align*}
On the other hand,
\begin{align*}
\bR\wt \epsilon_*\Bigl[(p\circ e\circ\varpi_D)^{-1}\cO_\Delta\otimes_\CC\DR_{\wt Y(D)}^\Star\cM'\Bigr]&\simeq\bR\wt \epsilon_*\Bigl[(p\circ\varpif\circ\wt \epsilon)^{-1}\cO_\Delta\otimes_\CC\DR_{\wt Y(D)}^\Star\cM'\Bigr]\\
&\simeq(p\circ\varpif)^{-1}\cO_\Delta\otimes_\CC\bR\wt \epsilon_*\DR_{\wt Y(D)}^\Star\cM'\\
&\simeq(p\circ\varpif)^{-1}\cO_\Delta\otimes_\CC\DR_{\wt X(f)}^\Star \Dm e_*\cM',
\end{align*}
so that it is enough to prove the proposition for $\wt Y(D)$ and $p\circ e$ (away from $(p\circ e)^{-1}(0)$). The case when $\Star=*$ being easy, we~are reduced to checking the cases when $\Star=\rrd$ and $\Star=\rmod$. We~can now work locally on $(Y,D)$ near a point $y_o\in e^{-1}(x_o)$, due to the properness of $e$. We~then use the notation $\cM$ instead of $\cM'$. We~choose local coordinates near a point of a neighborhood of $y_o$ not in $(p\circ e)^{-1}(0)$, as in the proof of Proposition~\ref{prop:noncharA}.

Let $\rho_{\bmd}$ be a local ramification along the components of $D$. Then $\cM$ is a direct summand of $\Dm\rho_{\bmd*}\Dm\rho_{\bmd}^*\cM$ so, by the pushforward argument already used, we~can assume that $\cM$ has a good formal decomposition along $D$. According to the generalized Hukuhara-Turrittin theorem already used in Theorem~\ref{th:drmod} (\cf \eg\cite[Th.\,12.5]{Bibi10} and the references given therein), we~can reduce to the case where $\cM=\cE^\varphi\otimes\cR$, where $\cE^\varphi=(\cO_Y(*D),\rd+\rd\varphi)$ and $\varphi$ is purely monomial, and $\cR$ has a regular singularity. By~induction on the rank of $\cR$, we~can assume that $\cR$ has rank one as an $\cO_Y(*D)$-module.

By the theorems of Majima \cite{Majima84} (\cf also \cite[App.]{Bibi93} for the rapid-decay case and \cite[App.]{Hien07} for the case with moderate growth), one proves that both the relative and the absolute de~Rham complexes (in the variants $\rrd$ and $\rmod$) have cohomology in degree zero only. Due to the special form of $\cM$, computing the $\cH^0$ of these complexes is easy, by twisting with $e^{-\varphi}$, and the desired isomorphism is then straightforward to obtain, as it is clear to decide whether $e^{-\varphi}x^\alpha$ $(\alpha\in\CC^\ell$) has rapid decay (\resp moderate growth) in any given small multi-sector.
\end{proof}

We can define the $X$-support condition as in Definition~\ref{def:X0support}.

\begin{corollaire}\label{cor:X-supp}
For a holonomic $\cD_X$-module $\cM$, the complexes $\pDR^\rmod\cM$ and $\pDR^\rrd\cM$ and their Poincaré-Verdier duals satisfy the $X$-support condition.
\end{corollaire}

\begin{proof}
The proof for the dual complexes follows from that for the complexes themselves, according to the Hepler--Hohl theorem \ref{conj:DRdual}. We~then argue as for the complex $\pDR\cM$. From Remark~\ref{rem:XsurS}, we obtain the inequality of Definition~\ref{def:X0support} for $j=0$. We argue by induction for $j\geq1$. The question is local and we can work in the setting of Corollary~\ref{cor:XsurS}. By induction, we have
\[
\dim X_s\text{-support}\, \bigl[\cH^j\pDR_{\wt X_s(f_s)}^\Star\cH^0(\Dm i^*_s\cM)\bigr]\leq-j,
\]
and so, by Corollary~\ref{cor:XsurS},
\begin{align*}
\dim X_s\text{-support}\, \bigl[\cH^j\bigl(i^{-1}_s\pDR_{\wt X(f)}^\Star\cM\bigr)\bigr]&=\dim X_s\text{-support}\, \bigl[\cH^{j+1}\pDR_{\wt X_s(f_s)}^\Star\cH^0(\Dm i^*_s\cM)\bigr]\\
&\leq-j-1.
\end{align*}
By considering various projections $p$, we conclude that
\[
\dim X\text{-support}\,\bigl(\cH^j\pDR_{\wt X(f)}^\Star\cM\bigr)\leq-j.\qedhere
\]
\end{proof}

\section{The sheaf of nearby cycles as a sheaf on the real blow-up space}\label{sec:nearby}
\subsection{The functor \texorpdfstring{$\ppsifstar$}{psif}}
Let $\cF$ be an object of $\catD^\rb(\CC_X)$. We~set
\[
\psifstar\cF:=\wtif{}^{-1}\bR\wtjf{}_*\,\wtjf{}^{-1}\cF,
\]
and $\ppsifstar\cF:=\psifstar\cF[-1]$ (recall that, similarly, $\ppsif\cF:=\psif\cF[-1]$).

\begin{remarque}\label{rem:Rconstr}
If $\cF$ is an object of $\catD^\rb_\Rc(\CC_X)$, then $\psifstar\cF$ is an object of $\catD^\rb_\Rc(\CC_{\partial\wt X})$:
\begin{itemize}
\item
the pullback $\varpi_f^{-1}\cF$ is $\RR$-constructible, as follows from \cite[Prop.\,8.4.10(i)]{K-S90};
\item
weak $\RR$-constructibility $\bR\wtjf{}_{_{\scriptstyle!}}\,\wtjf{}^{-1}\cF$ follows from the existence of a subanalytic refinement compatible with the pair $(\wt X,\partial\wt X)$ of a given subanalytic stratification of $\wt X$, and the finiteness property for obtaining $\RR$-constructibility is clear since $\bR\wtjf{}_{_{\scriptstyle!}}\,\wtjf{}^{-1}\cF$ is zero on $\partial\wt X$;
\item
by duality (\cf \cite[Prop.\,8.4.9]{K-S90}), $\bR\wtjf{}_*\,\wtjf{}^{-1}\cF$ is $\RR$-constructible, and applying once more \cite[Prop.\,8.4.10(i)]{K-S90} one obtains the $\RR$-constructibility of $\psifstar\cF$.
\end{itemize}
\end{remarque}

\begin{lemme}\label{lem:pushforwardwtpsifF}
Let $\pi:Y\to X$ be a proper morphism between complex manifolds and set $g=f\circ\pi$. For $\cG$ in $\catD^\rb(\CC_Y)$, we~have a functorial isomorphism
\[
\bR\wt\pi_*\ppsigstar\,\cG\simeq\ppsifstar\bR \wt\pi_*\cF.
\]
\end{lemme}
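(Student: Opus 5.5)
The statement has a typo: the right-hand side should read $\ppsifstar\bR\pi_*\cG$, with $\pi_*$ on $Y$ rather than $\wt\pi_*$, and $\cG$ in place of $\cF$. We prove $\bR\wt\pi_*\ppsigstar\cG\simeq\ppsifstar\bR\pi_*\cG$, where on the left $\wt\pi$ denotes the restriction $\partial\wt Y(g)=Y_0\times S^1\to X_0\times S^1=\partial\wt X(f)$, which is $\pi|_{Y_0}\times\id_{S^1}$. The shift by $-1$ occurs on both sides, so it suffices to treat $\psigstar$ and $\psifstar$. The plan is to combine proper base change with the commutation of direct images.

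\emph{Step 1 (properness).} Since $\wt Y(g)\subset Y\times S^1$ and $\wt X(f)\subset X\times S^1$ are closed and $\wt\pi$ is induced by $\pi\times\id_{S^1}$, the map $\wt\pi:\wt Y(g)\to\wt X(f)$ is proper. Moreover $\wt\pi^{-1}(\partial\wt X(f))=\partial\wt Y(g)$, because $g=f\circ\pi$, and $\wt\pi^{-1}(X^*)=Y^*$, with $\wt\pi$ restricting to $\pi^*:=\pi|_{Y^*}:Y^*\to X^*$.

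\emph{Step 2 (base change along the boundary).} The square formed by $\wti_g$, $\wti_f$ and the two maps $\wt\pi$ is Cartesian, since it is the preimage of a closed set. Proper base change \cite[Prop.\,2.6.7]{K-S90} then gives
\[
\bR\wt\pi_*\,\wti_g^{-1}\bR\wtj_{g*}\wtj_g^{-1}\cG\simeq\wti_f^{-1}\bR\wt\pi_*\bR\wtj_{g*}\wtj_g^{-1}\cG .
\]

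\emph{Step 3 (commuting direct images).} We have $\wt\pi\circ\wtj_g=\wtj_f\circ\pi^*$, hence
\[
\bR\wt\pi_*\bR\wtj_{g*}\wtj_g^{-1}\cG\simeq\bR\wtj_{f*}\bR\pi^*_*\wtj_g^{-1}\cG .
\]
The square with $j_g$, $j_f$, $\pi^*$ and $\pi$ is Cartesian because $Y^*=\pi^{-1}(X^*)$. Open restriction commutes with direct image, so
\[
\bR\pi^*_*\, j_g^{-1}\cG\simeq j_f^{-1}\bR\pi_*\cG .
\]
This is the open base change, which holds without any properness assumption. Substituting, the right-hand side of Step 2 becomes
\[
\wti_f^{-1}\bR\wtj_{f*}\wtj_f^{-1}\bR\pi_*\cG=\psifstar\bR\pi_*\cG .
\]

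Each isomorphism used is canonical, so the composite is functorial in $\cG$. The only real point requiring care is Step 2: properness of $\wt\pi$ and the Cartesian property of the boundary square. Both follow from the description of the real blow-ups as closures of graphs inside $X\times S^1$ and $Y\times S^1$, together with the identity $g/|g|=(f/|f|)\circ\pi$ on $Y^*$.
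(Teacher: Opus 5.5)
Your proof is correct, including your fix of the right-hand side to $\ppsifstar\bR\pi_*\cG$, and it is essentially the paper's argument. The paper just invokes proper base change together with $\wt Y(g)=Y\times_X\wt X(f)$; you spell out the routine steps, using only the consequences of that identity you need (properness of $\wt\pi$, $\wt\pi^{-1}(\partial\wt X(f))=\partial\wt Y(g)$, $\wt\pi^{-1}(X^*)=Y^*$) together with the commutation of $\bR\pi_*$ with restriction to $X^*$.
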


\begin{proof}
The lemma immediately follows from the base change theorem for a proper morphism and the property that $\wt Y(g)=Y\bigtimes_X\wt X(f)$.
\end{proof}

As a consequence, one can reduce the computation of $\ppsifstar\cF$ to the case where $f$ is the projection $X=X_0\times\CC\to\CC$, by applying the lemma to the graph embedding of~$f$.

For an object $\cM$ of $\catD^\rb(\cD_X)$, we~set for short
\begin{equation}\label{eq:wtppsiM}
\ppsifstar\cM:=\ppsifstar\pDR\cM.
\end{equation}

\begin{lemme}\label{lem:pushforwardwtpsif}
Same setting as in Lemma \ref{lem:pushforwardwtpsifF}. For $\cM$ in $\catD^\rb_{\pi\textup{-good}}(\cD_Y)$ (\ie with good cohomology in the sense of \eg\cite[Def.\,4.24]{Kashiwara03} in a neighborhood of any fiber of~$\pi$), we~have a functorial isomorphism
\[
\bR\wt\pi_*\ppsigstar\cM\simeq\ppsifstar(\Dm\pi_*\cM).\eqno\qed
\]
\end{lemme}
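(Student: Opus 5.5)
The plan is to combine Lemma~\ref{lem:pushforwardwtpsifF}, applied to $\cF=\pDR_Y\cM$, with the Kashiwara commutation of the de~Rham functor with proper direct images. The latter is available for objects with good cohomology near the fibers of $\pi$:
\[
\bR\pi_*\pDR_Y\cM\simeq\pDR_X(\Dm\pi_*\cM).
\]
Concretely, we first use definition \eqref{eq:wtppsiM}, which gives $\ppsigstar\cM=\ppsigstar\pDR_Y\cM$. Lemma~\ref{lem:pushforwardwtpsifF} then yields a functorial isomorphism
\[
\bR\wt\pi_*\ppsigstar\pDR_Y\cM\simeq\ppsifstar\bR\pi_*\pDR_Y\cM.
\]
In that lemma the right-hand side should be read with $\bR\pi_*$ on $X$, since $\ppsifstar$ is applied to complexes on $X$. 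The reason is that $\wtj_f$ and $\wt\pi$ are compatible over $X^*$: base change for the proper map $\wt\pi$ along $\wti_f$, together with $\bR\wt\pi_*\bR\wtj_{g*}=\bR\wtj_{f*}\bR\pi^*_*$ on $X^*$ and $\wtj_f^{-1}\bR\pi_*=\bR\pi^*_*\wtj_g^{-1}$, gives it.

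Next we substitute the $\cD$-module comparison. Since $\cM$ is $\pi$-good and $\pi$ is proper, there is a functorial isomorphism $\bR\pi_*\pDR_Y\cM\simeq\pDR_X\Dm\pi_*\cM$; this is the standard result, see \eg \cite{Kashiwara03}. Applying the functor $\ppsifstar$ to it and composing with the previous isomorphism gives
\[
\bR\wt\pi_*\ppsigstar\cM\simeq\ppsifstar\pDR_X(\Dm\pi_*\cM)=\ppsifstar(\Dm\pi_*\cM).
\]
Functoriality in $\cM$ follows because each step is a natural transformation of functors: base change morphisms, and the trace or integration morphism in the de~Rham comparison.

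The shifts are consistent. Both sides use $\ppsi=\psi[-1]$, and the $[\dim]$ shifts in $\pDR$ are already absorbed in the comparison isomorphism, which is stated for the perverse-shifted de~Rham complexes.

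The main point requiring care is the base change step. We need $\wt Y(g)=Y\times_X\wt X(f)$ as topological spaces, so that proper base change applies to both the closed inclusion $\wti$ and the open inclusion $\wtj$. We also need $\bR\wt\pi_*$ to commute with $\bR\wtj_*$, which is automatic from composition of direct images, since $\wt\pi\circ\wtj_g=\wtj_f\circ\pi|_{Y^*}$. For $\wti^{-1}$, the key input is proper base change. The fiber product identity itself follows from the Cartesian square \eqref{eq:diagcart}, pulled back along $\pi$: $\wt X(f)$ is the fiber product of $X$ with $X\times\wt\CC$ over $X\times\CC$, and the same holds for $\wt Y(g)$ via $\gamma_g=(\pi\times\id)\circ$ the graph.
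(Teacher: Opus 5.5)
Your proposal is correct and is essentially the argument the paper leaves implicit: the lemma is stated with \qed right after Lemma~\ref{lem:pushforwardwtpsifF}. You apply that lemma to $\cG=\pDR_Y\cM$, rightly reading its right-hand side as $\ppsifstar\bR\pi_*\cG$, and then use the commutation $\bR\pi_*\pDR_Y\cM\simeq\pDR_X\Dm\pi_*\cM$ for $\pi$-good $\cM$, which is exactly where the goodness hypothesis enters.
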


\subsection{Proof of \texorpdfstring{Theorem~\ref{th:q0-1}}{ref}}\label{subsec:pfq0-1}
This theorem follows from Proposition~\ref{prop:psipsitilde} below, which applies to any $\CC$-constructible complex $\cF$. We~will use the notation of the diagram in Figure~\ref{fig:diagram}, where all squares are Cartesian, $\wt X:=\wt X(f)$, and all maps $\rho$ are defined from the universal covering $\RR\to S^1$.
\begin{figure}[htb]
\[
\xymatrix@=.6cm{
&&\RR\times0\ar@{^{ (}->}[rr]\ar[dd]|!{[dl];[dr]}\hole &&\RR\times\RR_+\ar[dd]|!{[dl];[dr]}\hole&&\ar@{_{ (}->}[ll]\RR\times\RR_+^*\ar[dd]\\
\RR\times X_0=\hspace*{-.75cm}&\partial\,\wh X\ar@<-.5ex>@/_1.5pc/[dddd]_(.3){q_0}|(.525)\hole\ar@{^{ (}->}[rr]^(.35){\whi_{\!f}}\ar[dd]^(.3){\rho_0}\ar[ur]&&\wh X\ar[dd]^(.3){\rho}\ar[ur]&&\ar@{_{ (}->}[ll]_(.3){\whj_{\!f}}\wh X{}^*\ar[dd]^(.3){\rho}\ar[ur]\\
&&S^1\times0\ar@{^{ (}->}[rr]|!{[ur];[dr]}\hole\ar[dd]|!{[dl];[dr]}\hole&&S^1\times\RR_+\ar[dd]|!{[dl];[dr]}\hole&&\ar@{_{ (}->}[ll]|!{[ul];[dl]}\hole S^1\times\RR_+^*\ar@{=}[dd]\\
S^1\times X_0=\hspace*{-.75cm}&\partial\wt X\ar@{^{ (}->}[rr]^(.35){\wtif}\ar[dd]^(.3){\varpifo}\ar[ur]^-{\wt f_0}&&\wt X\ar[dd]^(.3){\varpif}\ar[ur]^-{\wt f}&&\ar@{_{ (}->}[ll]_(.3){\wtjf}X^*\ar@{=}[dd]\ar[ur]\\
&&0\ar@{^{ (}->}[rr]|!{[dl];[dr]}\hole&&\CC&&\ar@{_{ (}->}[ll]|!{[ul];[dl]}\hole \CC^*\\
&X_0\ar@{^{ (}->}[rr]^(.35){\iif}\ar[ur]&&X\ar[ur]^-f&&\ar@{_{ (}->}[ll]_(.3){\jf}X^*\ar[ur]^-f
}
\]
\caption{\label{fig:diagram}}
\end{figure}

Let us set $\cF^*=\jf^{-1}\cF$. Our first aim is to express the complex of nearby cycles $(\psif\cF,\rT)$ as defined by Deligne \cite{Deligne73} in terms of $\psifstar\cF$. Let $\sigma_0:\partial\,\wh X\isom\partial\,\wh X$ be the automorphism induced by $\theta\mto\theta+1$ on $\RR$. Since $\rho_0\circ\sigma_0=\rho_0$, we~have an identification
\[
\rho_0^{-1}\psifstar\cF\isom\sigma_0^{-1}\rho_0^{-1}\psifstar\cF,
\]
hence an isomorphism
\[
\wt\rT:\bR\rho_{0*}\rho_0^{-1}\psifstar\cF\to\bR\rho_{0*}\rho_0^{-1}\psifstar\cF,
\]
and thus an automorphism $\wt\rT$ of $\bR q_{0*}\rho_0^{-1}\psifstar\cF$.

\begin{proposition}\label{prop:psipsitilde}
We have a functorial isomorphism $(\psif\cF,\rT)\simeq(\bR q_{0*}\rho_0^{-1}\psifstar\cF,\wt\rT)$ and the morphism
\begin{starequation}\label{eq:psipsitilde}
q_0^{-1}\psif\cF\to \rho_0^{-1}\psifstar\cF
\end{starequation}%
induced by the adjunction $q_0^{-1}\bR q_{0*}\to\id$ is an isomorphism.
\end{proposition}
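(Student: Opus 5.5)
The plan is to compare both sides with Deligne's definition of nearby cycles through the universal cover of $\CC^*$, read off from Figure~\ref{fig:diagram}. Recall that $\psif\cF=\iif^{-1}\bR\jf{}_*\bR p_*p^{-1}\cF^*$, where $p:\wh X{}^*\to X^*$ is the pullback of the universal covering of $\CC^*$. In the figure this covering is $\rho:\wh X{}^*\to X^*$, with $\wh X{}^*=X^*\times_{\CC^*}(\RR\times\RR_+^*)$. The monodromy $\rT$ is induced by the deck transformation $\theta\mto\theta+1$. Since $\varpif\circ\wtjf=\jf$, I rewrite $\psif\cF=\iif^{-1}\bR\varpif{}_*\bR\wtjf{}_*\bR\rho_*\rho^{-1}\cF^*$.

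The first step is a base change along $\rho$. The covering $\rho:\wh X\to\wt X$ is a local homeomorphism, and the squares of the figure are Cartesian. Hence $\bR\wtjf{}_*\bR\rho_*=\bR\rho_*\bR\whj_{\!f*}$, and, because $\rho$ has discrete fibres and is locally trivial over a neighbourhood of any point of $\partial\wt X$, also $\wtif{}^{-1}\bR\rho_*=\bR\rho_{0*}\whi_{\!f}{}^{-1}$. This is not proper base change, so I check it by hand on the fundamental system of neighbourhoods $\wt V=(U\times\wt\Sigma)\cap\wt X$ from \ref{subsec:sheavesblup0}, taken with $\wt\Sigma$ a small sector. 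Over such a $\wt V$ the covering is trivial: $\rho^{-1}(\wt V)=\bigsqcup_{k\in\ZZ}\wt V_k$. For open sets in $\partial\wt X$ that are not small, I will use that $\rho_{0}$ is a covering with fibre $\ZZ$, so that $\bR\rho_{0*}$ is a product over the fibre. On small $\wt V$ one therefore gets a product over $k$ on both sides.

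Next I use the exactness of $\iif^{-1}\bR\varpif{}_*=\bR\varpifo{}_*\wtif{}^{-1}$, which is proper base change since $\varpif$ is proper and $\partial\wt X=\varpif^{-1}(X_0)$. Putting the steps together gives
\[
\psif\cF\simeq\bR\varpifo{}_*\bR\rho_{0*}\,\whi_{\!f}{}^{-1}\bR\whj_{\!f*}\rho^{-1}\cF^*.
\]
It then remains to identify $\whi_{\!f}{}^{-1}\bR\whj_{\!f*}\rho^{-1}\cF^*$ with $\rho_0^{-1}\psifstar\cF$. This again follows from the local-homeomorphism property: $\rho^{-1}\bR\wtjf{}_*=\bR\whj_{\!f*}\rho^{-1}$, and $\rho_0^{-1}\wtif{}^{-1}=\whi_{\!f}{}^{-1}\rho^{-1}$. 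Since $q_0=\varpifo\circ\rho_0$, this gives $\psif\cF\simeq\bR q_{0*}\rho_0^{-1}\psifstar\cF$. Both monodromies come from the same deck transformation $\theta\mto\theta+1$, acting on $\wh X{}^*$ and on $\partial\wh X$ respectively, so they correspond to each other, which proves the first assertion.

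For the second assertion, the isomorphism \eqref{eq:psipsitilde}, it suffices to show that $\cG:=\rho_0^{-1}\psifstar\cF$ is locally constant along the $\RR$-fibres of $q_0$. Indeed, the fibres of $q_0$ are copies of $\RR$, which are contractible, so for such a $\cG$ the adjunction $q_0^{-1}\bR q_{0*}\cG\to\cG$ is an isomorphism; I take this from \cite{K-S90}, using $\cG$ locally of the form $q_0^{-1}$ of something. Since everything is local on $X_0$, I reduce to $f$ being the projection $X_0\times\CC\to\CC$, using the graph embedding and Lemma~\ref{lem:pushforwardwtpsifF}; note that $\gammaf$ is a closed embedding, hence proper.

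Local constancy is the main obstacle. For $\CC$-constructible $\cF$, I use a Whitney stratification of $X_0\times\CC$ adapted to $\cF$ and satisfying the Thom $a_f$ condition for the projection $f=t$. By Thom's first isotopy lemma, over a small ball $B\ni x$ in $X_0$ and a small punctured disc, the sets $\{|t|=\epsilon\}$ are topologically locally trivial over the circle $S^1$ of $\arg t$, compatibly with $\cF$. I then compute the stalk of $\psifstar\cF$ at $(x,e^{i\theta})$ as $\varinjlim\bR\Gamma(B\times\Sigma^*,\cF)$, where $\Sigma^*$ runs over small open sectors around the direction $\theta$. These stalks are independent of $\theta$, and the restriction maps between overlapping sectors are isomorphisms. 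This gives local constancy on each circle $\{x\}\times S^1$, hence the property for $\cG$ along the $\RR$-fibres, and the adjunction morphism \eqref{eq:psipsitilde} is an isomorphism. Functoriality is clear, since all the maps used are base-change and adjunction morphisms.
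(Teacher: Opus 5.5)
Your first part follows the same chain of identifications as the paper (Lemma~\ref{lem:rhoopsitilde} and the computation after it). However, the step $\wtif{}^{-1}\bR\rho_*\simeq\bR\rho_{0*}\whi_{\!f}^{-1}$ is not a formal identity, and your local check does not prove it. Since $\rho$ has infinite fibres and is not proper, the two stalks at $y\in\partial\wt X$ are:
\begin{itemize}
\item for the left-hand side, $\varinjlim_{\wt V}\prod_k\bR\Gamma(\wt V_k,\cdot)$, over neighbourhoods $\wt V$ of $y$ in $\wt X$;
\item for the right-hand side, $\varinjlim_{W}\prod_k\bR\Gamma(W_k,\whi_{\!f}^{-1}\cdot)$, over neighbourhoods $W$ of $y$ in $\partial\wt X$.
\end{itemize}
Getting ``a product over $k$ on both sides'' is not enough, because filtered colimits do not commute with infinite products. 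For example, let $\theta$ be the coordinate on the factor $\RR$ of $\RR\times\RR_+$, and take the constant sheaf on the closed subset $\{1/(|\theta|+2)\leq|f|\leq1/(|\theta|+1)\}$ of $\wh X{}^*$, extended by zero to $\wh X$. Its restriction to $\partial\wh X$ is zero, but its image under $\wtif{}^{-1}\rho_*$ is not: the stalks contain germs at infinity of sequences indexed by the sheets.

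What is missing is a finiteness property of the complex to which you apply the base change, namely Property~(P) of Proposition~\ref{prop:basech}. This is why the paper first rewrites $\bR\whj_{\!f}{}_*\rho^{-1}\cF^*$ as $\rho^{-1}\bR\wtjf{}_*\cF^*$; you only do this at the very end. The paper then applies Proposition~\ref{prop:basech} to $\bR\wtjf{}_*\cF^*$, which is $\RR$-constructible by Remark~\ref{rem:Rconstr}. Hence this complex and its restriction to $\partial\wt X$ satisfy (P) by \cite[Prop.\,8.1.4]{K-S90} (Example~\ref{ex:basech}). With this ingredient inserted, your first part is the paper's argument.

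For the second assertion your route is genuinely different from the paper's, and it is sound. The paper proceeds in two steps:
\begin{itemize}
\item It proves Lemma~\ref{lem:ixloccst} by reducing to a single circle, using properness of $\varpifo$ and again the covering base change, hence again weak $\RR$-constructibility.
\item It checks local constancy on circles only after a proper pushforward and a dévissage to $\cF=j_!\cL$ with $f$ monomial in normal-crossing coordinates. There it follows from the fibration $(S^1)^\ell\to S^1$.
\end{itemize}
You instead use two different tools:
\begin{itemize}
\item The homotopy result of \cite[\S2.7]{K-S90} for the projection $X_0\times\RR\to X_0$. It only requires local constancy of the cohomology along each fibre. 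Being locally of the form $q_0^{-1}(\cdot)$ is then a consequence, not a hypothesis you need to check.
\item L\^e's stratified Milnor fibration over sectors, to get local constancy on circles. This uses an $a_f$-stratification, transversality of small spheres and Thom's isotopy lemma, with $0<\eta\ll\delta$. You also need independence of the Milnor radius to identify the stalk as a colimit over both $B$ and $\Sigma^*$.
\end{itemize}
Your approach avoids resolution of singularities and dévissage and treats every constructible $\cF$ at once, at the cost of importing stratification theory. The paper's approach stays within the normal-crossing and resolution machinery it uses throughout.
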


\begin{lemme}\label{lem:rhoopsitilde}
Let us set $\cF^*:=\jf^{-1}\cF$. We~have a functorial isomorphism
\[
(\bR\rho_{0*}\rho_0^{-1}\psifstar\cF,\wt\rT)\simeq(\wtif{}^{-1}\bR\wtjf{}_*(\bR\rho_*\rho^{-1}\cF^*),\rT).
\]
\end{lemme}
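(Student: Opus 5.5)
We prove the isomorphism by base change along the Cartesian squares of Figure~\ref{fig:diagram}. The covering $\rho:\wh X\to\wt X$ is a local homeomorphism, so the restriction $\wh X{}^*\to X^*$ of $\rho$, which we still denote by $\rho$, is a covering with fiber $\ZZ$; it is the pullback of $\RR\times\RR_+^*\to S^1\times\RR_+^*$ by $f$, that is, the universal cyclic covering of $X^*$ attached to $f$. We use the Cartesian squares
\[
\rho\circ\whi_{\!f}=\wtif\circ\rho_0,\qquad \rho\circ\whj_{\!f}=\wtjf\circ\rho .
\]

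First we treat the left side. Since $\rho_0^{-1}\wtif{}^{-1}=\whi_{\!f}{}^{-1}\rho^{-1}$ and $\rho$ is a local homeomorphism, the functors $\rho^{-1}$ and $\bR\wtjf{}_*$ commute in the form $\rho^{-1}\bR\wtjf{}_*\simeq\bR\whj_{\!f*}\rho^{-1}$. Therefore
\[
\rho_0^{-1}\psifstar\cF\simeq \whi_{\!f}{}^{-1}\bR\whj_{\!f*}\,\rho^{-1}\cF^* .
\]

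Next we push forward by $\rho_{0*}$. We would like to commute $\bR\rho_{0*}$ with $\whi_{\!f}{}^{-1}$, using $\bR\rho_*\bR\whj_{\!f*}=\bR\wtjf{}_*\bR\rho_*$. This is the main obstacle, because $\rho$ is not proper, so proper base change does not apply. We handle it in two steps.

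1. Since $\wtif$ is a closed embedding, $\wtif{}^{-1}\bR\rho_*\simeq\bR\rho_{0*}\whi_{\!f}{}^{-1}$ holds provided that, near a point of $\partial\wt X$, sections over a neighborhood of $\partial\wh X$ can be computed on a fundamental system of neighborhoods. This is a stalk computation over $x\times\theta$. The fiber of $\rho_0$ is the discrete set $\theta+\ZZ$, and small neighborhoods $\wt V$ (as in~\ref{subsec:sheavesblup0}) have preimages that are disjoint unions of copies of $\wt V$ indexed by $\ZZ$. Both stalks are then $\prod_{k\in\ZZ}$ of the stalk of $\bR\whj_{\!f*}\rho^{-1}\cF^*$ at the point $(x,\theta+k)$.

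2. To justify step~1 we need the product to commute with the colimit over neighborhoods. I would use $\CC$-constructibility of $\cF$: by the local conic structure of $f$ near $x$, the cohomology of $\wt V\cap X^*$ with coefficients in $\cF^*$ stabilizes on a cofinal family of neighborhoods $\wt V$ of uniform size. This uniformity holds for all $k$ simultaneously because the deck translations act compatibly. Without constructibility the two sides could differ, and this uniformity is where the hypothesis is used.

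With step~1 in hand, we get
\[
\bR\rho_{0*}\rho_0^{-1}\psifstar\cF\simeq \wtif{}^{-1}\bR\rho_*\bR\whj_{\!f*}\rho^{-1}\cF^*\simeq \wtif{}^{-1}\bR\wtjf{}_*\bR\rho_*\rho^{-1}\cF^* .
\]

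Finally, we check compatibility with the automorphisms. The automorphism $\wt\rT$ comes from $\sigma_0$. Under the isomorphisms above, $\sigma_0$ is the restriction to $\partial\wh X$ of the deck transformation of $\wh X$, which restricts on $\wh X{}^*$ to the deck transformation of the covering $\wh X{}^*\to X^*$. The latter induces the monodromy $\rT$ on $\bR\rho_*\rho^{-1}\cF^*$. The isomorphisms of base change are natural transformations, so they intertwine the actions of the deck transformations. This gives the equality of the pairs $(\bR\rho_{0*}\rho_0^{-1}\psifstar\cF,\wt\rT)$ and $(\wtif{}^{-1}\bR\wtjf{}_*(\bR\rho_*\rho^{-1}\cF^*),\rT)$, and functoriality in $\cF$ is clear from the construction.
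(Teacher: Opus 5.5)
Your argument is correct and is essentially the paper's proof run in the opposite direction. The paper uses the same three ingredients: the composition $\bR\wtjf{}_*\bR\rho_*=\bR\rho_*\bR\whj_{\!f}{}_*$, the base change $\rho^{-1}\bR\wtjf{}_*\simeq\bR\whj_{\!f}{}_*\rho^{-1}$ for the local homeomorphism $\rho$, and base change along $\wtif$ for the covering $\rho$. It justifies the last one, as you do, by commuting the product over the fibers of $\rho$ with the colimit over neighborhoods, packaged as Property~(P) for the weakly $\RR$-constructible complexes $\bR\wtjf{}_*\cF^*$ and $\wtif{}^{-1}\bR\wtjf{}_*\cF^*=\psifstar\cF$ (Example~\ref{ex:basech}). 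One small point: your step~2 only gives the stabilization for neighborhoods in $\wt X$, but identifying the stalk of $\bR\rho_{0*}\whi_{\!f}{}^{-1}(\cdot)$ also needs it for neighborhoods in $\partial\wt X$, \ie for $\psifstar\cF$; this follows in the same way from the $\RR$-constructibility of $\psifstar\cF$ (Remark~\ref{rem:Rconstr}).
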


\begin{proof}
We have
\begin{align*}
\wtif{}^{-1}\bR\wtjf{}_*(\bR\rho_*\rho^{-1}\cF^*)&\simeq \wtif{}^{-1}\bR\rho_*\bR\,\,\whj_{\!f}{}_*\rho^{-1}\cF^*\\
&\simeq \wtif{}^{-1}\bR\rho_*\rho^{-1}\bR\wtjf{}_*\cF^*\quad(\text{Example~\ref{ex:basech}})\\
&\simeq \bR\rho_{0*}\,\,\whi_{\!f}^{-1}\rho^{-1}\bR\wtjf{}_*\cF^*\quad(\text{Example~\ref{ex:basech}})\\
&=\bR\rho_{0*}\rho_0^{-1}\wtif{}^{-1}\bR\wtjf{}_*\cF^*=\bR\rho_{0*}\rho_0^{-1}\psifstar\cF.
\end{align*}
The compatibility with $\wt\rT,\rT$ is then clear.
\end{proof}

\begin{proof}[\proofname\ of Proposition \ref{prop:psipsitilde} (first part)]
We have\enlargethispage{\baselineskip}
\begin{align*}
\psif\cF&=\iif^{-1}\bR\jf{}_*\bR\rho_*\rho^{-1}\cF^*\quad\text{(by definition)}\\
&=\iif^{-1}\bR\varpif{}_*\bR\wtjf{}_*\bR\rho_*\rho^{-1}\cF^*\\
&=\bR\varpifo{}_*\wtif{}^{-1}\bR\wtjf{}_*\bR\rho_*\rho^{-1}\cF^*\quad(\varpif\text{ proper})\\
&\simeq\bR\varpifo{}_*\bR\rho_{0*}\rho_0^{-1}\psifstar\cF\quad(\text{Lemma~\ref{lem:rhoopsitilde}})\\
&=\bR q_{0*}\rho_0^{-1}\psifstar\cF.
\end{align*}
The compatibility with $\wt\rT,\rT$ follows from the previous lemma.
\end{proof}

\begin{proof}[\proofname\ that \eqref{eq:psipsitilde} is an isomorphism]

\begin{lemme}\label{lem:ixloccst}
Let $\cG$ be a weakly $\RR$-constructible bounded complex on $\partial\wt X(f)$ (\cf\cite[Def.\,8.4.3]{K-S90}) satisfying the following property:
\par\noindent
\begin{minipage}[t]{.08\textwidth}
$(\ref{lem:ixloccst}\,*)$
\end{minipage}
\hfill
\begin{minipage}[t]{.9\textwidth}
For each $x\in X_0$ and $\wti_x:\varpifo^{-1}(x)\simeq S^1\times\{x\}\hto\partial\wt X(f)\simeq S^1\times X_0$, the cohomology sheaves of the restriction $\wti_x{}^{-1}\cG$ to $S^1\times\{x\}$ are locally constant with finite rank.
\end{minipage}
\par\noindent
Then the adjunction morphism $q_0^{-1}\bR q_{0*}\rho_0^{-1}\cG\to\rho_0^{-1}\cG$ is an isomorphism.
\end{lemme}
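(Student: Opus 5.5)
To prove that the adjunction morphism $q_0^{-1}\bR q_{0*}\rho_0^{-1}\cG\to\rho_0^{-1}\cG$ is an isomorphism, I will check it stalkwise at a point $(x,t)\in X_0\times\RR$. Since $q_0$ is the projection $X_0\times\RR\to X_0$, the stalk of the left-hand side at $(x,t)$ is $(\bR q_{0*}\rho_0^{-1}\cG)_x=\varinjlim_{U\ni x}\bR\Gamma(U\times\RR,\rho_0^{-1}\cG)$. The goal is therefore to show that this limit equals $(\rho_0^{-1}\cG)_{(x,t)}$, and that the comparison map is the natural restriction.

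The first step uses weak $\RR$-constructibility to compute the limit on a single fiber. Since $q_0=\varpifo\circ\rho_0$, we have $\bR q_{0*}\rho_0^{-1}\cG=\bR\varpifo{}_*\bR\rho_{0*}\rho_0^{-1}\cG$. The map $\varpifo$ is proper, being the projection $S^1\times X_0\to X_0$, so proper base change gives $(\bR q_{0*}\rho_0^{-1}\cG)_x\simeq\bR\Gamma(S^1,\wti_x^{-1}\bR\rho_{0*}\rho_0^{-1}\cG)$. It remains to commute $\wti_x^{-1}$ with $\bR\rho_{0*}$, where $\rho_0$ is the covering $\RR\times X_0\to S^1\times X_0$, which is not proper. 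For this I will use the local structure of a weakly $\RR$-constructible complex. Choose a subanalytic triangulation of $\partial\wt X(f)$ adapted to $\cG$, refined so that $S^1\times\{x\}$ is a union of simplices. Locally near $S^1\times\{x\}$, $\cG$ is then determined by its restriction to the open stars. This gives a fundamental system of neighborhoods $W$ of $S^1\times\{x\}$, of the form $S^1\times U$, such that $\bR\Gamma(\rho_0^{-1}(W)\cap(I\times U),\rho_0^{-1}\cG)\simeq\bR\Gamma(I,\wti_x^{-1}\cG)$ for small intervals $I$ (this is the "conic structure" lemma, \cite[Prop.\,8.4.?]{K-S90}-type). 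Gluing over an open cover of $\RR$ by such intervals yields $\varinjlim_U\bR\Gamma(U\times\RR,\rho_0^{-1}\cG)\simeq\bR\Gamma(\RR,\rho^{-1}\wti_x^{-1}\cG)$, where $\rho:\RR\to S^1$ denotes the universal cover.

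The second step computes the right-hand side of that isomorphism. By hypothesis $(\ref{lem:ixloccst}\,*)$, the complex $\wti_x^{-1}\cG$ has locally constant cohomology of finite rank on $S^1$. Its pullback to $\RR$ therefore has constant cohomology sheaves, and since $\RR$ is contractible this pullback is constant: $\rho^{-1}\wti_x^{-1}\cG\simeq$ the constant complex associated with its stalk at $t$. Hence $\bR\Gamma(\RR,\rho^{-1}\wti_x^{-1}\cG)\to(\rho^{-1}\wti_x^{-1}\cG)_t=(\rho_0^{-1}\cG)_{(x,t)}$ is an isomorphism. It remains to check that the composite of these identifications is the adjunction map on stalks; this follows because every identification used is a restriction map.

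The main obstacle is the first step, namely the interchange of the limit over $U$ with cohomology over the non-compact space $\RR$. A direct argument is delicate because infinitely many intervals are involved. To get around this, I would exploit periodicity: the neighborhoods $S^1\times U$ and the triangulation can be chosen uniformly on the compact circle, so the local isomorphisms hold with a single $U$ for all translates of the intervals under $\sigma_0$. A Mayer–Vietoris argument over a locally finite cover of $\RR$ by intervals, using finiteness of the ranks to control the resulting limits, then concludes the proof.
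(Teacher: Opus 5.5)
Your outline matches the paper's proof. Proper base change along $\varpifo$ turns the stalk at $(x,t)$ into $\bR\Gamma(S^1,\wti_x^{-1}\bR\rho_{0*}\rho_0^{-1}\cG)$. One must then commute $\wti_x^{-1}$ with $\bR\rho_{0*}$, and your final computation on $\RR$ via $(\ref{lem:ixloccst}\,*)$ is fine.

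\textbf{The gap.} The middle step is the one you yourself single out, and it is not proved; the justification you sketch does not work. You assert that for small intervals $I$ and $U$ in a fundamental system of neighbourhoods of $x$, restriction gives $\bR\Gamma(I\times U,\cG)\isom\bR\Gamma(I,\wti_x^{-1}\cG)$. This does not follow from the local structure of a weakly $\RR$-constructible $\cG$ near $S^1\times\{x\}$, even when $\wti_x^{-1}\cG$ is locally constant. Here is a counterexample:
\begin{itemize}
\item Take $X_0=\CC$ (e.g.\ $f(y,z)=z$ on $\CC^2$), $x=0$, $0<\epsilon<\delta<\pi$, and $\cG=\CC_Z$ with $Z=(S^1\times\{0\})\cup\{(e^{i(c-|y|)},y) : |y|\leq\delta\}$.
\item Then $\cG$ is $\RR$-constructible and $\wti_0^{-1}\cG=\CC_{S^1}$.
\item For $I=\{e^{i\theta} : c-\epsilon<\theta<c\}$ and every disc $U$ centred at $0$, the set $Z\cap(I\times U)$ is the disjoint union of $I\times\{0\}$ and a punctured disc. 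Hence $H^1(I\times U,\cG)\neq0=H^1(I,\wti_0^{-1}\cG)$.
\item The point $(e^{ic},0)$ is necessarily a vertex of any triangulation adapted to $\cG$, so intervals read off from the triangulation can have exactly such an endpoint.
\end{itemize}
This $\cG$ violates $(\ref{lem:ixloccst}\,*)$ at $y\neq0$, so it does not contradict the lemma. But your argument for this step uses only constructibility near $S^1\times\{x\}$. To make your route work you would have to choose endpoints away from finitely many bad points and supply a genuine local-triviality (isotopy-type) argument, which is absent.

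Your last paragraph does not fill this in either. Periodicity is automatic, since over an evenly covered open set every sheet of $\rho_0$ carries the same copy of $\cG$. If the local isomorphisms held for a single $U$, Mayer--Vietoris would conclude for that $U$ with no limit to control. If they fail, finiteness of ranks does not make $\varinjlim_U$ commute with the infinite product over the sheets.

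\textbf{The missing idea.} After proper base change, the required isomorphism $\wti_x^{-1}\bR\rho_{0*}\rho_0^{-1}\cG\isom\bR\rho_*\rho^{-1}\wti_x^{-1}\cG$ (with your $\rho:\RR\to S^1$) is a statement about stalks at single points $\theta\in S^1$. It needs no intervals or product neighbourhoods; this is what Proposition~\ref{prop:basech} and Example~\ref{ex:basech} provide in the paper's proof. The argument runs as follows:
\begin{itemize}
\item For an evenly covered neighbourhood $V$ of $(\theta,x)$, one has $\bR\Gamma(\rho_0^{-1}V,\rho_0^{-1}\cG)=\prod_n\bR\Gamma(V,\cG)$, with the same $V$ on every sheet.
\item Weak $\RR$-constructibility gives (via open stars of triangulations, \cite[Prop.\,8.1.4]{K-S90}) a fundamental system of such $V$ on which $\bR\Gamma(V,\cG)\to\cG_{(\theta,x)}$ is an isomorphism.
\item The same holds for the weakly $\RR$-constructible complex $\wti_x^{-1}\cG$ at $\theta$.
\item Hence both stalks equal $\prod_n\cG_{(\theta,x)}$, and the base change morphism is an isomorphism.
\end{itemize}
This step uses neither $(\ref{lem:ixloccst}\,*)$ nor any finiteness of ranks. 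The hypothesis enters only in your final computation on $\RR$.
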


\begin{proof}
We first reduce to proving the lemma when $X_0$ is a point. It~is enough to prove that, for every $x\in X_0$, the morphism
\[
\whi_{\!x}{}^{-1}q_0^{-1}\bR q_{0*}\rho_0^{-1}\cG\to\whi_{\!x}{}^{-1}\rho_0^{-1}\cG
\]
is an isomorphism, and this reduces to showing
\[
q_0^{-1}\bR\Gamma(S^1\times\{x\},\wti_{x}^{-1}\bR\rho_{0*}\rho_0^{-1}\cG)\to\rho_0^{-1}\wti_{x}^{-1}\cG
\]
is an isomorphism. Due to the assumption on $\cG$, we~can apply Example~\ref{ex:basech} to write the right-hand side as $q_0^{-1}\bR\Gamma(\RR\times\{x\},\rho_0^{-1}\wti_{x}^{-1}\cG)$, so~we are reduced to proving the lemma for $\wti_{x}^{-1}\cG$ on $S^1\times\{x\}$.

Now, if $\cG$ is a bounded complex on $S^1$ whose cohomology is locally constant and of finite rank, the cohomology of $\rho_0^{-1}\cG$ on $\RR$ is constant of finite rank, and \hbox{$H^k(\RR,\cH^j\cG)\!=\!0$} for $k\neq0$, so~it is easy to conclude that $q_0^{-1}\bR\Gamma(\RR,\rho_0^{-1}\cG)\to\rho_0^{-1}\cG$ is an isomorphism.
\end{proof}

\begin{lemme}
Let $\pi:Y\to X$ be a proper morphism and set $g=f\circ\pi$. If the morphism~\eqref{eq:psipsitilde} for $g$ and a $\CC$-constructible bounded complex $\cG$ on $Y$ is an isomorphism, then so is the morphism~\eqref{eq:psipsitilde} for $f$ and $\cF=\bR\pi_*\cG$ on $X$.
\end{lemme}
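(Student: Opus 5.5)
The plan is to compare the two adjunction morphisms through proper base change along $\wt\pi$, and to use that $\wt\pi$ is a homeomorphism on each circle fiber. Write $\pi_0:Y_0\to X_0$ for the restriction of $\pi$. Then $\wt\pi$ restricts to $\partial\wt Y(g)=Y_0\times S^1\to X_0\times S^1$, which is $\pi_0\times\id_{S^1}$, and it lifts to $\wh\pi_0=\pi_0\times\id_\RR:Y_0\times\RR\to X_0\times\RR$. The squares
\[
\begin{array}{c}
\xymatrix{Y_0\times\RR\ar[r]^-{\rho_0^Y}\ar[d]_{\wh\pi_0}&Y_0\times S^1\ar[d]^{\wt\pi_0}\\ X_0\times\RR\ar[r]^-{\rho_0}&X_0\times S^1}
\end{array}
\qquad
\begin{array}{c}
\xymatrix{Y_0\times\RR\ar[r]^-{q_0^Y}\ar[d]_{\wh\pi_0}&Y_0\ar[d]^{\pi_0}\\ X_0\times\RR\ar[r]^-{q_0}&X_0}
\end{array}
\]
are Cartesian, and the vertical maps are proper.

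The first step is to identify the source and target of \eqref{eq:psipsitilde} for $f$ and $\cF=\bR\pi_*\cG$ with pushforwards of the corresponding objects for $g$ and $\cG$.

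For the target, Lemma~\ref{lem:pushforwardwtpsifF} gives $\psifstar\bR\pi_*\cG\simeq\bR\wt\pi_{0*}\psi_g^*\cG$. Proper base change in the first square then gives
\[
\rho_0^{-1}\bR\wt\pi_{0*}\psi_g^*\cG\simeq\bR\wh\pi_{0*}\rho_0^{Y,-1}\psi_g^*\cG.
\]

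For the source, the classical proper base change for Deligne's nearby cycles gives $\psif\bR\pi_*\cG\simeq\bR\pi_{0*}\psi_g\cG$; alternatively, this follows from the first part of Proposition~\ref{prop:psipsitilde} together with the isomorphism just obtained. Base change in the second square then gives
\[
q_0^{-1}\bR\pi_{0*}\psi_g\cG\simeq\bR\wh\pi_{0*}q_0^{Y,-1}\psi_g\cG.
\]

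The second step is compatibility, and it is the only real point of the proof. We must check that under these identifications the morphism \eqref{eq:psipsitilde} for $(f,\bR\pi_*\cG)$ coincides with $\bR\wh\pi_{0*}$ applied to the morphism \eqref{eq:psipsitilde} for $(g,\cG)$. Both morphisms are built from the adjunction $q^{-1}\bR q_*\to\id$ and from the isomorphism of the first part of Proposition~\ref{prop:psipsitilde}. Base change morphisms are compatible with adjunction units and counits; this is the standard compatibility of the base change isomorphism with composition of Cartesian squares. So it remains to see that the isomorphism $\psif\cF\simeq\bR q_{0*}\rho_0^{-1}\psifstar\cF$ commutes with $\bR\pi_*$. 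That isomorphism was constructed in Lemma~\ref{lem:rhoopsitilde} and in the proof above purely from base change isomorphisms and the equality $\bR\jf{}_*=\bR\varpif{}_*\bR\wtjf{}_*$, each of which commutes with proper pushforward. I expect this diagram chase to be the main bookkeeping obstacle, though it is routine once written out.

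Granting this, the morphism for $f$ is $\bR\wh\pi_{0*}$ of an isomorphism, hence is itself an isomorphism.
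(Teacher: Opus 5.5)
Your proposal is correct and follows the paper's route: the paper's proof is the single remark that the statement follows from proper base change, and you have written out the two Cartesian squares over $X_0\times\RR$ and the compatibility of the base change isomorphism with the counit of the adjunction between $q_0^{-1}$ and $\bR q_{0*}$. The step you call the main bookkeeping obstacle is not needed: whether \eqref{eq:psipsitilde} is an isomorphism depends only on the counit $q_0^{-1}\bR q_{0*}\rho_0^{-1}\psifstar\cF\to\rho_0^{-1}\psifstar\cF$, not on which isomorphism identifies its source with $q_0^{-1}\psif\cF$, so you never have to check that the first-part isomorphism of Proposition~\ref{prop:psipsitilde} commutes with $\bR\pi_*$.
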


\begin{proof}
Straightforward due to the base change property for a proper morphism.
\end{proof}

By a standard `dévissage', we~can assume that there exists a divisor $D'\subset X$ with normal crossings and smooth components, such that, denoting by $j:U=X\moins D'\hto X$ the inclusion, $\cF=j_!\cL$, where $\cL$ is a local system on $U$, and $f^{-1}(0)=D\subset D'$, so~$X_0=D$ with the previous notation. Let $\varpi_D:\wt X(D)\to X$ be the real blowing up of the components of $D$, so~that $\wt X(D)$ is a manifold with corners. Then we have a decomposition $\varpi_D=\varpif\circ\varpi_{D,f}$ with $\varpi_{D,f}:\wt X(D)\to\wt X(f)$. We~will prove that~$(\ref{lem:ixloccst}\,*)$ holds at any $x_o\in D$ for $\psifstar\cF$ with these assumptions.

We can choose local coordinates $(x_1,\dots,x_\ell,y_1,\dots,y_m,z_1,\dots,z_p)$ on $X$ centered at a fixed $x_o\in D$ such that $f(\bmx,\bmy,\bmz)=\prod_{i=1}^\ell x_i^{e_i}=:\bmx^{\bme}$ ($e_i>0$ for all $i$) and $D'=\{\prod x_i\prod y_j=0\}$. Then $\wt X(D)$ has partial polar coordinates $(\rhog,\rme^{i\thetag},\bmy,\bmz)$ ($\rho_i\in\RR_+$) and $\partial\wt X(D)=\{\rho_1\cdots\rho_\ell=0\}$. The map $\varpi_{D,f}:\varpi_D^{-1}(x_o)\to\varpi_f^{-1}(x_o)$ is induced by the map $(S^1)^\ell\to S^1$ given by $\rme^{i\thetag}\mto\rme^{\sum e_i\theta_i}$.

With obvious notation, we~have $\psifstar\cF=\bR\varpi_{D,g*}\,\wti_D^{-1}\bR\wtj_{D*}\,\cF^*$. Restricting to $\prod y_j\neq0$, we~have $\cF^*=\cL$ and $\wt\cL:=\wti_D^{-1}\bR\wtj_{D*}\cF^*$ is a local system with the same monodromy as $\cL$. On $\partial\wt X(D)$ we then have $\wti_D^{-1}\bR\wtj_{D*}\,\cF^*=j_!\wt\cL$ (extension by zero along $\prod y_j=0$).

Since the map $\varpi_{D,f}:\varpi_D^{-1}(x_o)\to\varpi_f^{-1}(x_o)$ is a fibration and $\wt\cL$ is a local system, we~conclude that, if $x_o\in D\moins \{\prod y_j=0\}$, the cohomology sheaves of $\wti_{x_o}^{-1}\psifstar\cF$ are locally constant of finite rank, while if $x_o\in D\cap \{\prod y_j=0\}$, they are zero. In any case,~$(\ref{lem:ixloccst}\,*)$ holds for $\psifstar\cF$.
\end{proof}

\subsection{The \texorpdfstring{$X_0$}{X0}-support condition and duality for \texorpdfstring{$\protect\wt{\ppsif}{}^*\cM$}{M}}\label{subsec:wtpsif*}
Since $\ppsif\pDR\cM$ satisfies the support condition on~$X_0$, the isomorphism \eqref{eq:psipsitilde} shows that $\ppsifstar\cM:=\ppsifstar\pDR\cM$ satisfies the $X_0$\nobreakdash-support condition.

To prove this property for the Verdier dual complex $\bD\ppsifstar\cM$, it~suffices to show that, for a constructible complex $\cF$, there exists an isomorphism $\bD\ppsifstar\cF\isom\wt\ppsif\bD\cF[1]$, since $\pDR$ is compatible with duality. Since $\ppsifstar\cF$ and $\cF$ are $\RR$-constructible, we~have (\cf\cite[Chap.\,3]{K-S90}):
\[
\bD\psifstar\cF\simeq\wtif{}^!\bR\wt j_{f!}\bD\cF\simeq(\psifstar\bD\cF)[-1].
\]
We conclude:
\[
\bD\ppsifstar\cF=\bD(\psifstar\cF[-1])=(\bD\psifstar\cF)[1]\simeq\psifstar\bD\cF\simeq(\ppsifstar\bD\cF)[1].\eqno\qed
\]

\section{The moderate and rapid-decay nearby cycles}\label{sec:pfintro}

\subsection{The functors \texorpdfstring{$\ppsifStar$}{psif}}

We keep the notation of Notation~\ref{nota:cB} and we set
\begin{equation}\label{eq:wtppsimodrd}
\ppsifStar\cM=\wtif{}^{-1}\pDR_{\wt X(f)}^\Star\cM[-1].
\end{equation}

By the faithful flatness of $\iif^{-1}\cO_{\wh{X|X_0}}$ over $\iif^{-1}\cO_X$, for every $\cO_X$-module $\cM$ we~have the equality (\cf \ref{subsec:sheavesblup6} for the notation $\cQ_{X_0}$)
\begin{equation}\label{eq:QXL}
\cQ_{X_0}\overset{\bL}\otimes_{\iif^{-1}\cO_X}\iif^{-1}\cM=\cQ_{X_0}\otimes_{\iif^{-1}\cO_X}\iif^{-1}\cM.
\end{equation}

\begin{lemme}\label{lem:63}
We have
\begin{align*}
\bR\varpif{}_*\wt\ppsif{}^\rmod\cM&\simeq\iif{}^{-1}\pDR_X(\cM(*X_0))[-1],\\
\bR\varpif{}_*\wt\ppsif{}^\rrd\cM&\simeq\pDR(\cQ_{X_0}\otimes\iif^{-1}\cM)[-2].
\end{align*}
Here we regard $\cQ_{X_0}\otimes\iif^{-1}\cM$ as an $\iif^{-1}\cD_X$-module, and
\[
\pDR(\cQ_{X_0}\otimes\iif^{-1}\cM):=\DR(\cQ_{X_0}\otimes\iif^{-1}\cM)[\dim X].\eqno\qed
\]
\end{lemme}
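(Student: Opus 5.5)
The plan is to combine the definition \eqref{eq:wtppsimodrd} with proper base change for $\varpi_f$ and then invoke Lemma~\ref{lem:48}. Write $\varpi_{f,0}:\partial\wt X(f)\to X_0$ for the restriction of $\varpif$, and $\varpi_f\circ\wtif=\iif\circ\varpi_{f,0}$. Since $\varpif$ is proper, the square formed by $\wtif$, $\iif$, $\varpi_{f,0}$, $\varpif$ is Cartesian. Hence for any complex $\cK$ on $\wt X(f)$ we have $\bR\varpi_{f,0*}\wtif{}^{-1}\cK\simeq\iif^{-1}\bR\varpif{}_*\cK$. Applying this to $\cK=\pDR^\Star_{\wt X(f)}\cM[-1]$ reduces both statements to computing $\iif^{-1}\bR\varpif{}_*\pDR^\Star_{\wt X(f)}\cM$ for $\Star=\rmod,\rrd$. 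The statement writes $\bR\varpif{}_*$ for $\bR\varpi_{f,0*}$, and the comparison above identifies the two.

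For $\Star=\rmod$, Lemma~\ref{lem:48} gives $\bR\varpif{}_*\pDR^\rmod_{\wt X(f)}\cM\simeq\pDR_X(\cM(*X_0))$. Restricting to $X_0$ and shifting by $[-1]$ yields the first formula directly.

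For $\Star=\rrd$, I would not use the cone form of Lemma~\ref{lem:48}. Instead I would go back to its proof: by flatness \ref{subsec:sheavesblup1} and the projection formula,
\[
\bR\varpif{}_*\bigl(\cA^\rrd_{\wt X(f)}\otimes\varpif^{-1}(\Omega^\cbbullet_X\otimes\cM)\bigr)\simeq\bR\varpif{}_*\cA^\rrd_{\wt X(f)}\otimes^{\bL}_{\cO_X}(\Omega^\cbbullet_X\otimes\cM),
\]
compatibly with the connection. Next I apply $\iif^{-1}$ and use \ref{subsec:sheavesblup6} in the form $\iif^{-1}\bR\varpif{}_*\cA^\rrd_{\wt X(f)}\simeq\cQ_{X_0}[-1]$. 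This turns the right-hand side into $\cQ_{X_0}\otimes^{\bL}_{\iif^{-1}\cO_X}\iif^{-1}(\Omega^\cbbullet_X\otimes\cM)[-1]$. By \eqref{eq:QXL}, which rests on faithful flatness of the completion, the derived tensor product is an ordinary one. The resulting complex is $\DR(\cQ_{X_0}\otimes\iif^{-1}\cM)[-1]$. After the shift by $\dim X$ and the further $[-1]$ from \eqref{eq:wtppsimodrd}, this gives $\pDR(\cQ_{X_0}\otimes\iif^{-1}\cM)[-2]$.

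The main obstacle is keeping track of the differential. One must check that the isomorphism $\iif^{-1}\bR\varpif{}_*\cA^\rrd\simeq\cQ_{X_0}[-1]$ is compatible with the $\iif^{-1}\cD_X$-actions, so that the identification holds at the level of de~Rham complexes and not only term by term. I would handle this by working with the explicit two-term model $\{\cO_X\to\cO_{\wh{X|X_0}}\}$ from \ref{subsec:sheavesblup6}. Both of its terms carry natural $\cD_X$-actions, and the map between them is $\cD_X$-linear. Its restriction to $X_0$ is quasi-isomorphic to $\cQ_{X_0}[-1]$ as a complex of $\iif^{-1}\cD_X$-modules, because $\iif^{-1}\cO_X\to\iif^{-1}\cO_{\wh{X|X_0}}$ is injective. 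With this model, the tensor product with $\Omega^\cbbullet_X\otimes\cM$ gives a double complex, and the statement reduces to comparing its total complex with $\DR(\cQ_{X_0}\otimes\iif^{-1}\cM)[-1]$.
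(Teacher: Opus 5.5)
Your proposal is correct and follows the argument the paper intends; the paper states the lemma without proof, immediately after recording $\iif^{-1}\bR\varpif{}_*\cA^\rrd_{\wt X(f)}\simeq\cQ_{X_0}[-1]$ and \eqref{eq:QXL}, so the argument is proper base change for $\varpif$, then Lemma~\ref{lem:48} for $\rmod$, and the projection formula plus faithful flatness for $\rrd$, as you describe. Two small remarks: the square is Cartesian simply because $\partial\wt X(f)=\varpif^{-1}(X_0)$, and properness is what gives base change; and the compatibility with the differential that you flag is already built into Lemma~\ref{lem:48} through $\pDR^\Star_{\wt X(f)}\cM\simeq\bR\cHom_{\varpif^{-1}\cD_X}(\varpif^{-1}\cO_X,\varpi_f^\Star\cM)$ (Lemma~\ref{lem:41}), so you could equally restrict its cone form to $X_0$ and use the exact sequence $0\to\iif^{-1}\cM\to\iif^{-1}(\cO_{\wh{X|X_0}}\otimes\cM)\to\cQ_{X_0}\otimes\iif^{-1}\cM\to0$ given by \eqref{eq:QXL}.
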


Recall that $\pDR_{\wt X(f)}^\srmod\cM$ and $\pDR_{\wt X(f)}^\srrd\cM$ are supported on $\partial\wt X(f)$. According to~\eqref{eq:DRStarbis} we also have:
\begin{equation}\label{eq:wtppsi>modrd}
\begin{split}
\wt\ppsif{}^\srmod\cM&:=\wtif{}^{-1}\pDR_{\wt X(f)}^\srmod\cM[-1]\simeq\wtif{}^!\pDR_{\wt X(f)}^\rmod\cM,\\
\wt\ppsif{}^\srrd\cM&:=\wtif{}^{-1}\pDR_{\wt X(f)}^\srrd\cM[-1]\simeq\wtif{}^!\pDR_{\wt X(f)}^\rrd\cM.
\end{split}
\end{equation}
Applying the functor $\wtif{}^{-1}[-1]$ to the distinguished triangles \eqref{eq:DRStar} or \eqref{eq:DRStarbis}, we~obtain two distinguished triangles
\begin{equation}\label{eq:distmodrd}
\begin{gathered}
\wt\ppsif{}^\rmod\cM\to\ppsifstar\cM\to\wt\ppsif{}^\srmod\cM\To{+1}\\
\wt\ppsif{}^\rrd\cM\to\ppsifstar\cM\to\wt\ppsif{}^\srrd\cM\To{+1}.
\end{gathered}
\end{equation}

Let $\pi:Y\to X$ be a morphism of complex manifolds and set $g=f\circ\pi$. Recall that there is a natural morphism $\wt\pi=\wt Y(g)\to\wt X(f)$ extending $\pi:Y^*\to X^*$.

\begin{proposition}[Compatibility with projective pushforward]
Assume that $\pi$ is projective. Let $\cM$ be an object of $\catD^\rb_{\pi\textup{-good}}(\cD_Y)$. We~have a functorial isomorphism of distinguished triangles
\[
\xymatrix@R.5cm{
\wt\ppsif{}^\rmod(\Dm\pi_*\cM)\ar[r]\ar[d]^(.4)\wr&\ppsifstar(\Dm\pi_*\cM)\ar[r]\ar[d]^(.4)\wr&\wt\ppsif{}^\srmod(\Dm\pi_*\cM)\ar[r]^-{+1}\ar[d]^(.4)\wr&\\
\bR\wt\pi_*\wt\ppsig{}^\rmod\cM\ar[r]&\bR\wt\pi_*\ppsigstar\cM\ar[r]&\bR\wt\pi_*\wt\ppsig{}^\srmod\cM\ar[r]^-{+1}&
}
\]
and a similar one with rapid decay.
\end{proposition}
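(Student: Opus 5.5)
The plan is to reduce the compatibility for the $\wt\ppsi$-functors to the compatibility of the $\Star$-de~Rham complexes on the real blow-up spaces with projective pushforward (Corollary~\ref{cor:473}). Then we apply the restriction functor $\wtif{}^{-1}[-1]$ and use proper base change for the Cartesian square formed by $\wt\pi:\wt Y(g)\to\wt X(f)$ and the boundary inclusions. Since $\wt Y(g)=Y\times_X\wt X(f)$, the map $\wt\pi$ is proper whenever $\pi$ is, and $\partial\wt Y(g)=\wt\pi^{-1}(\partial\wt X(f))$. Hence $\wtif{}^{-1}\bR\wt\pi_*\simeq\bR\wt\pi_{|\partial*}\,\wti_g^{-1}$.

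\textbf{Step 1.} For $\Star\in\{*,\rmod,\rrd\}$, invoke Corollary~\ref{cor:473} to get functorial isomorphisms
\[
\bR\wt\pi_*\pDR^\Star_{\wt Y(g)}\cM\simeq\pDR^\Star_{\wt X(f)}(\Dm\pi_*\cM)
\]
for $\cM$ $\pi$-good. These should come from the same underlying construction: tensoring with $\cA^\Star$, the pushforward formula \ref{eq:pushpiA} for $\cA^\Star$ (Mochizuki's Th.~4.1.5), and the relative Spencer resolution $\cD_{X\leftarrow Y}$. Consequently they are compatible with the natural inclusions $\cA^\rrd\subset\cA^\rmod\subset\cA^*$.

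\textbf{Step 2.} Check that the morphisms $\pDR^\rmod\to\pDR^*$ (and $\pDR^\rrd\to\pDR^*$) are compatible with these isomorphisms. This gives a commutative square in the derived category. The key point is that the square must commute at the level of complexes, not merely in the derived category: the construction in Step~1 should be functorial in the coefficient sheaf $\cA^\Star$, namely via a morphism of $\varpi^{-1}\cD$-modules $\cA^\rmod\to\cA^*$ and naturality of the pushforward and projection formula in that argument. For $\Star=*$, one can alternatively use Proposition~\ref{prop:*jf*} together with Lemma~\ref{lem:pushforwardwtpsif} to match $\ppsifstar$.

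\textbf{Step 3.} To handle the third vertices functorially without choosing cones, use the short exact sequences \eqref{eq:tridistDmod} at the level of sheaves. Define $\pDR^\srmod$ through $\cA^\srmod=\cA^*/\cA^\rmod$, and apply the same pushforward argument to the whole short exact sequence of coefficients $0\to\cA^\rmod\to\cA^*\to\cA^\srmod\to0$. Mochizuki's theorem \ref{eq:pushpiA} holds for each $\Star$, including the quotients. The morphism of short exact sequences then induces a morphism of distinguished triangles whose three components are isomorphisms, and this map is canonical because it comes from a morphism of exact sequences of complexes. Applying $\wtif{}^{-1}[-1]$ and proper base change yields the stated diagram. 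The rapid-decay case is identical, using $\cA^\rrd$ and $\cA^\srrd$.

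\textbf{Main obstacle.} I expect the hardest point to be Step~3's requirement that the isomorphisms of Corollary~\ref{cor:473} are realized at chain level, functorially in the coefficient sheaf. Concretely, one needs a model of $\bR\wt\pi_*(\cA^\Star_{\wt Y(g)}\otimes\varpi_g^{-1}(\cD_{X\leftarrow Y}\otimes^{\bL}\cM))$ that is functorial in $\Star$. I would attack this by using Godement resolutions and a fixed locally free resolution of $\cM$ by induced $\cD_Y$-modules $\cD_Y\otimes\cL$, which is possible locally on $X$ by $\pi$-goodness. With these choices, every identification is natural in the $\cA^\Star$ variable.
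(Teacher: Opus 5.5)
Your proposal is correct and follows the paper's own route. The paper deduces the statement directly from Corollary~\ref{cor:473}, which holds for every value of $\Star$ (including the quotient ones) and is natural in the coefficient sheaves; it then restricts to the boundary using proper base change for $\wt\pi$. Your extra care in realizing the triangle isomorphism through the short exact sequences \eqref{eq:tridistDmod} spells out the functoriality the paper leaves implicit.
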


\begin{proof}
This is a direct consequence of Corollary~\ref{cor:473}.
\end{proof}

\begin{corollaire}
Assume that $f:X\to\CC$ is projective and let $t$ be a coordinate on $\CC$. Let $\cM$ be a holonomic $\cD_X$\nobreakdash-module. Then the long exact sequence
\begin{multline*}
\cdots\to\cH^k\bR f_*\wt\ppsif{}^\rmod\cM\to\cH^k\bR f_*\ppsifstar\cM\to\cH^k\bR f_*\wt\ppsif{}^\srmod\cM\\
\to\cH^{k+1}\bR f_*\wt\ppsif{}^\rmod\cM\to\cdots
\end{multline*}
splits into short exact sequences, and the short exact sequence
\[
0\to\cH^k\bR f_*\wt\ppsif{}^\rmod\cM\to\cH^k\bR f_*\ppsifstar\cM\to\cH^k\bR f_*\wt\ppsif{}^\srmod\cM\to0
\]
is identified with the short exact sequence
\[
0\to\wt\ppsit{}^\rmod(\Dm f_*^k\cM)\to\wt\ppsit{}^*(\Dm f_*^k\cM)\to\wt\ppsit{}^\srmod(\Dm f_*^k\cM)\to0.
\]
A similar result holds for the rapid-decay complexes. Moreover, $\cH^k\bR f_*\ppsifstar\cM$ is a local system on $S^1$ for each $k$.
\end{corollaire}

\begin{proof}
For a holonomic $\cD_\CC$-module $\cN$, the complex $\wt\ppsit{}^{\!\!\Star}\cN$ can have nonzero cohomology in degree one only, so that $\wt\ppsit{}^{\!\!*}\cN\to\wt\ppsit{}^\srmod\cN$ is surjective. We~conclude that, for each $k$, the $k$th cohomology of the complex $\ppsitStar(\Dm f_*\cM)$ is equal to $\ppsitStar(\Dm f_*^k\cM)$, and the result follows.
\end{proof}

\subsection{Proof of Proposition~\ref{prop:perversite}}\label{pf:perversite}
Assume $\cM$ is holonomic. By \cite[(3.13)]{Kashiwara03} and \eqref{eq:QXL}, we have:
\[
\pDR(\cQ_{X_0}\otimes\iif^{-1}\cM)\simeq\bR\cHom_{\iif^{-1}\cD_X}(\cM^\vee,\cQ_{X_0})[\dim X]\simeq\iif^{-1}\pIrr^*_{X_0}\cM^\vee[1],
\]
by \cite[Cor.\,3.4-4]{Mebkhout04}. Therefore, by Lemma~\ref{lem:63},
\[
\bR\varpif{}_*\wt\ppsif{}^\rrd\cM=\iif^{-1}\pDR(\cQ_{X_0}\otimes\iif^{-1}\cM)[-2]\simeq\iif^{-1}\pIrr^*_{X_0}\cM^\vee[-1].
\]

Similarly, $\bR\varpif{}_*\wt\ppsif{}^\srmod\cM$ is isomorphic to the cone of
\[
\iif^{-1}\pDR\cM(*X_0)\to\iif^{-1}\bR\jf{}_*\jf^{-1}\pDR\cM,
\]
hence is isomorphic to $\iif^{-1}\pIrr_{X_0}\cM$ (\cf\cite[Def.\,3.4-1]{Mebkhout04}).\qed

\subsection{Proof of Theorem~\ref{th:perversite}}\label{subsec:pfperversite}
The $\RR$-constructibility property follows from Theorem~\ref{th:DRmodRconst}, while the case of $\ppsifstar\cM$ was treated in Section~\ref{subsec:wtpsif*}. Furthermore, the $X_0$\nobreakdash-support condition for $\wt\ppsif{}^\rmod\cM[1]$ and $\wt\ppsif{}^\rrd\cM[1]$ follows immediately from Corollary~\ref{cor:X-supp}, from which we deduce that of $\wt\ppsif{}^\rmodrd\cM[1]$. From the distinguished triangles \eqref{eq:distmodrd} we obtain that for $\wt\ppsif{}^\srmod\cM$ and $\wt\ppsif{}^\srrd\cM$.\qed

\subsection{An improvement of Theorem~\ref{th:perversite} in the good case}\label{subsec:goodperversite}
Let $(Y,D)$ be a smooth complex manifold with a normal crossing divisor with smooth components, and let $\cM$ be a good meromorphic flat bundle on $Y$ with poles along $D$. Let $g:Y\to\CC$ be a holomorphic function such that $D_g:=g^{-1}(0)$ is contained in $D$. We~will use the notation as in the proof of Theorem~\ref{th:DRmodRconst}.

\begin{proposition}
The complexes $\wt\ppsig{}^{\rmod D_g}\cM$ and $\wt\ppsig{}^{\rrd D_g}\cM$ satisfy the $X_0$-support condition.
\end{proposition}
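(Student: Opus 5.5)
We will reduce the statement to a computation on the manifold with corners $\wt Y(D)$, where the good formal structure makes the moderate and rapid-decay de~Rham complexes into single sheaves. By \eqref{eq:Dg} we have
\[
\wt\ppsig{}^{\rmod D_g}\cM\simeq\wti_g^{-1}\bR\varpi_{D,g*}\pDR^{\rmod D}_{\wt Y(D)}\cM[-1],\qquad \wt\ppsig{}^{\rrd D_g}\cM\simeq\wti_g^{-1}\bR\varpi_{D,g*}\pDR^{\rrd D_g,\rmod D'}_{\wt Y(D)}\cM[-1].
\]
Since $\varpi_{D,g}$ is proper, proper base change turns these into $\bR\varpi_{D,g*}$ of the restriction to $\varpi_{D,g}^{-1}(\partial\wt Y(g))=\varpi_D^{-1}(D_g)$. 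By Theorem~\ref{th:drmod}, the complexes $\DR^{\rmod D}_{\wt Y(D)}\cM$ and $\DR^{\rrd D_g,\rmod D'}_{\wt Y(D)}\cM$ are sheaves $\cF$ in degree $0$. Hence each complex in question is, up to shift, $\bR\varpi_{D,g*}(\cF|_{\varpi_D^{-1}(D_g)})[\dim Y-1]$, and its cohomology in degree $j$ is $R^{k}\varpi_{D,g*}$ with $k=j+\dim Y-1$.

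The $D_g$-support condition then asks that, for $k\ge0$, the set of points $y\in D_g$ at which $R^k\varpi_{D,g*}\cF$ is nonzero over $\varpi_g^{-1}(y)$ has dimension at most $\dim Y-1-k$. The fibers of $\varpi_{D,g}$ over a point of $\varpi_g^{-1}(y)$ are tori: if $y$ lies on exactly $\ell$ components of $D_g$ (and some components of $D'$), the fiber is a subtorus $(S^1)^{\ell-1}$ of $\varpi_D^{-1}(y)\simeq(S^1)^{\ell+\ell'}$, namely a fiber of $\rme^{i\thetag}\mapsto\rme^{i\sum e_i\theta_i}$ times the $D'$-angles. Cohomology of such a fiber with coefficients in an $\RR$-constructible sheaf can only be nonzero in degrees $k\le\ell-1+\ell'$. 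The stratum of points lying on $\ell+\ell'$ components has dimension $\dim Y-\ell-\ell'$, which gives the crude bound $\dim\le\dim Y-1-k$ only when $\ell'=0$ and $k\le\ell-1$. So we must gain on the $D'$ directions and on the top degree.

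The main obstacle is exactly this local vanishing. Following the proof of Proposition~\ref{prop:XsurS}, we will reduce by ramification and the Hukuhara--Turrittin decomposition to $\cM=\cE^\varphi\otimes\cR$ with $\varphi$ purely monomial and $\cR$ of rank one and regular. Then $\cF$ is explicit on each fiber. It is a rank-one local system restricted to the closed or open region of the torus where $\mathrm{Re}\,\varphi$ is bounded above, respectively where $e^{\varphi}$ has rapid decay in the $D_g$ directions. This region is cut out by the sign of $\cos(\arg u-\sum m_i\theta_i)$, i.e.\ it is a union of half-tori-bands.

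Along the $D'$-angles $\cF$ is a local system with moderate growth, i.e.\ a pullback from the $D_g$-angle torus twisted by a rank-one system. Its cohomology along the $D'$-circles vanishes unless the monodromy there is trivial, and then contributes degrees up to $\ell'$. In that case we instead use the non-characteristic restriction of Corollary~\ref{cor:XsurS} along $D'$-transversal slices, as in Corollary~\ref{cor:X-supp}, to trade those degrees for dimension.

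In the $D_g$-angle directions, the key computation is that the cohomology of a band region $\{\cos(\psi)\ge0\}$ or $\{\cos\psi>0\}$ in $(S^1)^{\ell-1}$, with $\psi$ a nonzero linear form, is that of $(S^1)^{\ell-2}$ (respectively with compact-type shift). This yields vanishing in degree $\ell-1$ whenever $\varphi\neq0$. When $\varphi=0$ (the regular case), the Example on regular singularities shows that the rapid-decay sheaf vanishes on $\varpi_D^{-1}(D_g)$, and the moderate sheaf is the full local system, treated as for $\ppsifstar$. Combining the degree bound $k\le\ell-2$ (irregular) or the vanishing (regular) with $\dim(\text{stratum})=\dim Y-\ell$ gives the required inequality. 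An induction on strata via Corollary~\ref{cor:XsurS} removes the $D'$ contributions.
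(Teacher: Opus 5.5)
\textbf{Verdict.} Your first paragraph is exactly the paper's reduction, but the proposal then goes wrong: you discard the fiber-dimension bound, which is actually the whole proof, and replace it with an argument whose key step is false. The reduction itself is fine. Via \eqref{eq:Dg}, proper base change and Theorem~\ref{th:drmod}, the complex is $\bR\varpi_{D,g*}(\cF|_{\varpi_D^{-1}(D_g)})[\dim Y-1]$ for a single sheaf $\cF$. So you need $\dim\bigl(D_g\text{-}\Supp\, R^k\varpi_{D,g*}\cF\bigr)\leq\dim Y-1-k$.

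\textbf{The arithmetic slip.} Your claim that the fiber-dimension bound works ``only when $\ell'=0$'' is wrong. Put $L=\ell+\ell'$, the total number of components of $D$ through $y$.
\begin{itemize}
\item The fiber of $\varpi_{D,g}$ over a point above $y$ is $\{\theta\in(S^1)^L:\sum e_i\theta_i=c\}$, a compact manifold of real dimension $L-1$. So $R^k\varpi_{D,g*}\cF$ vanishes above $y$ for $k>L-1$.
\item The stratum of such $y$ has dimension $\dim Y-L$.
\item The required inequality $\dim Y-L\leq\dim Y-1-k$ is \emph{equivalent} to $k\leq L-1$.
\end{itemize}
Hence the support of $R^k\varpi_{D,g*}\cF$ lies in the closed analytic set where at least $k+1$ components of $D$ meet, which has dimension at most $\dim Y-1-k$. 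Nothing needs to be gained in the $D'$-directions or in top degree. This is precisely the paper's proof; its $\ell$ is your $\ell+\ell'$, and its $k$ is your $\ell$.

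\textbf{The detour does not go through.} Everything after that point is unnecessary, and as written it fails.
\begin{itemize}
\item Your ``key'' vanishing in degree $\ell-1$ whenever $\varphi\neq0$ is false. Take $\dim Y=1$, $g=x$, and $\cM=(\cO_Y(*0),\rd+\rd(1/x))$, so $\ell=1$, $\ell'=0$, and $\varpi_{D,g}$ is the identity. Then $\cF$ on $S^1$ is the extension by zero of $\CC$ from the open half-circle where $\mathrm{Re}(1/x)>0$, which is nonzero in degree $0=\ell-1$.
\item More generally, near a Stokes direction $e^{-\varphi}$ has no moderate growth on any neighborhood. So $\cF$ is typically an extension by zero from an \emph{open} band region. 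Its compactly supported cohomology, like that of $(S^1)^{\ell-2}\times(0,1)$, reaches degree $\ell-1$. If $\bmm$ is proportional to $\bme$, the region is the whole fiber torus.
\item $\cF$ is not locally constant along the $D'$-angles when $\varphi$ has poles along $D'$. So the ``local system along $D'$-circles'' step and the trading of degrees via Corollary~\ref{cor:XsurS} are not justified.
\end{itemize}
Drop the detour and correct the dimension count; the resulting argument is complete.
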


\begin{proof}
The statement is local, so~we can use local coordinates $x_1,\dots,x_n$ adapted to~$D$, \ie such that $D=\{x_1\cdots x_\ell=0\}$ and $g(x_1,\dots,x_n)=x_1^{e_1}\cdots x_\ell^{e_\ell}=\bmx^\bme$, with $e_j\geq0$. We~have
\[
\wt Y(D)=(S^1)^\ell\times(\RR_+)^\ell\times\CC^{n-\ell},\quad\partial\wt Y(D)=(S^1)^\ell\times\partial(\RR_+)^\ell\times\CC^{n-\ell},
\]
with coordinates $(\theta_1,\dots,\theta_\ell;\rho_1,\dots,\rho_\ell;x_{\ell+1},\dots,x_n)$, where $\partial(\RR_+)^\ell$ is defined by $\rho_1\cdots\rho_\ell=0$. The map $\varpi_{D,g}$ is given by the formula
\[
(\theta_1,\dots,\theta_\ell;\rho_1,\dots,\rho_\ell;x_{\ell+1},\dots,x_n)\mto\Bigl(\sum e_i\theta_i,\prod\rho_i^{e_i},x_{\ell+1},\dots,x_n\Bigr).
\]
More precisely, if $e_j>0$ for $j=1,\dots,k$ and $e_j=0$ for $j=k+1,\dots,\ell$, then we have
\begin{align*}
\wt Y(D)&\To{\varpi_{D,D_g}}\wt Y(D_g)\\[-5pt]
(\theta_1,\dots,\theta_\ell;\rho_1,\dots,\rho_\ell;\bmx_{\sssup\ell})&\Mto{\hphantom{\varpi_{D,D_g}}}(\theta_1,\dots,\theta_k;\rho_1,\dots,\rho_k;\rho_{\sssup k}e^{i\theta_{\sssup k}},\bmx_{\sssup\ell})\\[5pt]
\tag*{and}
\wt Y(D_g)&\To{\varpi_{D_g,g}}\wt Y(g)\\[-5pt]
(\theta_1,\dots,\theta_k;\rho_1,\dots,\rho_k;\rho_{\sssup k}e^{i\theta_{\sssup k}},\bmx_{\sssup\ell})&\Mto{\hphantom{\varpi_{D_g,g}}}\textstyle\Bigl(\sum_{i=1}^k e_i\theta_i,\prod_{i=1}^k\rho_i^{e_i},\rho_{\sssup k}e^{i\theta_{\sssup k}},\bmx_{\sssup\ell}\Bigr).
\end{align*}
Let $S$ be the stratum defined by $x_1=\cdots=x_\ell=0$ and $x_j\neq0$ for $j>\ell$. Above this stratum, $\partial\wt Y(D)$ is defined by $\rho_1=\cdots=\rho_\ell=0$ and the map $\varpi_{D,g}$ is given by $(\theta_1,\dots,\theta_\ell,\bmx_{\sssup\ell})\mto(\sum_{i=1}^k e_i\theta_i,,\bmx_{\sssup\ell})$, hence its fibers are compact manifolds of real dimension $\ell-1$. As~a consequence, for every sheaf $\cG$ on $\partial\wt Y(D)_{|S}$, we~have $R^j\varpi_{D,g*}\cG=\nobreak0$ for $j>\ell-1$. The $X_0$-support (\ie $D_g$-support) condition then follows from \eqref{eq:Dg} and the generalized Hukuhara-Turrittin theorem \ref{th:drmod}.
\end{proof}

\begin{remarque}
In this case, the support condition for $\pIrr_{D_g}\cM$ and $\pIrr^*_{D_g}\cM$ (\cf \cite[Th.\,3.5-2]{Mebkhout04}) can be obtained as in dimension one, \eg as in \cite[Cor.\,3.16]{Bibi10}, by using \cite[Prop.\,9.23]{Bibi10}.
\end{remarque}

\subsection{Proof of Conjecture~\ref{conj:dualite}}\label{subsec:proofduality}

We first note that, by applying $\iota_f^{-1}$ to the second line of \eqref{eq:>rdrd}, we~obtain a functorial isomorphism
\[
\wt\ppsif{}^\srrd\cM\isom\wtif{}^!\pDR_{\wt X(f)}^\rrd\cM.
\]
Next, we~obtain, by applying $\iota_f^!$ to the isomorphism of Conjecture~\ref{conj:DRdual}\eqref{conj:DRdual1}, that the distinguished triangle
\[
\iota_f^!\bD\pDR_{\wt X(f)}^\srmod\cM\to\iota_f^!\bD\pDR_{\wt X(f)}^*\cM\to\iota_f^!\bD\pDR_{\wt X(f)}^\rmod\cM\To{+1}
\]
is isomorphic to the distinguished triangle
\[
\wtif{}^{-1}\pDR_{\wt X(f)}^\rrd\bD\cM[-1]\to\iota_f^{-1}\bR\wt j_{f*}\,\jf^{-1}\pDR_X\bD\cM[-1]\to\iota_f^!\pDR_{\wt X(f)}^\rrd\bD\cM\To{+1}.
\]
(where we used $\iota_f^!\bR j_{f!}\simeq\iota_f^{-1}\bR j_{f*}[-1]$ for the middle term); using the commutation $\iota_f^!\circ\bD\simeq\bD\circ\iota_f^{-1}$ together with the preliminary remark, we~obtain an isomorphism between the triangle
\[
\bD(\wt\ppsif{}^\srmod\cM[1])\to\bD(\ppsifstar\cM[1])\to\bD(\wt\ppsif{}^\rmod\cM[1])\To{+1}
\]
and the triangle
\[
\wt\ppsif{}^\rrd\bD\cM\to\ppsifstar\bD\cM\to\wt\ppsif{}^\srrd\bD\cM\To{+1}.\eqno\qed
\]

\subsection{Proof of Theorem \ref{th:q0-2}}\label{subsec:pfq0-2}
We begin by exhibiting, for any $\cD_X$-module, a morphism of complexes
\[
\rho_0^{-1}\pDR\bigl(\varpi^{-1}(\wh\Nils\otimes_{\cO_{X\times\CC}}\Dm\gamma_{f*}\cM)\bigr)\to\rho_0^{-1}\pDR^\rmodrd_{X\times\wt\CC}(\Dm\gamma_{f*}\cM),
\]
which is functorial with respect to $\cM$. We use Remark~\ref{rem:gammaf} to identify of the right-hand side with $\rho_0^{-1}\pDR^\rmodrd_{\wt X(f)}(\cM)$. The left-hand side is identified with
\[
\rho_0^{-1}\pDR\bigl([\cA_{X\times\wt\CC}^\nils/\cA_{X\times\wt\CC}^\rrd]\otimes_{\varpi^{-1}\cO_{X\times\CC}}\varpi^{-1}(\Dm\gamma_{f*}\cM)\bigr),
\]
by \eqref{eq:suiteexnilsformel}, and the right-hand side reads
\[
\rho_0^{-1}\pDR\bigl(\cA_{X\times\wt\CC}^\rmodrd\otimes_{\varpi^{-1}\cO_{X\times\CC}}\varpi^{-1}(\Dm\gamma_{f*}\cM)\bigr).
\]
The inclusion $\cA_{X\times\wt\CC}^\nils\hto\cA_{X\times\wt\CC}^\rmod$ provides the desired morphism, even before applying~$\rho_0^{-1}$; proving that it is an isomorphism reduces to showing that
\begin{equation}\label{eq:isonilsmod}
\pDR\bigl(\cA_{X\times\wt\CC}^\nils\otimes_{\varpi^{-1}\cO_{X\times\CC}}\varpi^{-1}(\Dm\gamma_{f*}\cM)\bigr)\to\pDR\bigl(\cA_{X\times\wt\CC}^\rmod\otimes_{\varpi^{-1}\cO_{X\times\CC}}\varpi^{-1}(\Dm\gamma_{f*}\cM)\bigr)
\end{equation}
is an isomorphism. We will thus focus on this assertion.

We proceed by induction on the dimension of the support of $\cM$, the case of dimension zero being clear since both terms vanish. We may thus assume that there exists a projective modification $e:Y\to X$ and a normal crossing divisor $D\subset Y$ such that, setting $g=f\circ e$, we~have $g^{-1}(0)\subset D$ and there exists a good meromorphic flat bundle~$\cM'$ on $Y$ with poles along $D$ such that $\cM=\Dm e_*\cM'$. We recall the diagram~\eqref{eq:blowtilde} and we argue as in the proof of Theorem~\ref{th:DRmodRconst}. Then the desired isomorphism follows from \cite{Mochizuki10}, as already mentioned in Remark~\ref{rem:Nilssonmod}.\qed

\appendix

\section{Base change for a covering map}
Let us consider a Cartesian square of topological spaces:
\[
\xymatrix{
X'\ar[r]^-{g'}\ar[d]_{f'}&X\ar[d]^f\\
Y'\ar[r]_-g\ar@{}[ur]|\square&Y
}
\]

\begin{lemme}
There is a canonical morphism of functors $g^{-1}\circ f_*\to f'_*\circ g^{\prime-1}$.
\end{lemme}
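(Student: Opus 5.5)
The morphism is built from the unit and counit of the adjunction $f^{-1}\dashv f_*$ (and likewise for $f'$), composed along the identity $f\circ g'=g\circ f'$. No hypothesis on the square beyond commutativity is needed.

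First, for a sheaf $\cF$ on $X$, apply $g^{-1}$ to $f_*\cF$. Then use the unit $\id\to f'_*f'^{-1}$ to get
\[
g^{-1}f_*\cF\to f'_*f'^{-1}g^{-1}f_*\cF.
\]
Second, rewrite $f'^{-1}g^{-1}=(g\circ f')^{-1}=(f\circ g')^{-1}=g'^{-1}f^{-1}$, using the canonical isomorphism for inverse images of a composition. This gives
\[
f'_*f'^{-1}g^{-1}f_*\cF\simeq f'_*g'^{-1}f^{-1}f_*\cF.
\]
Third, apply $f'_*g'^{-1}$ to the counit $f^{-1}f_*\cF\to\cF$, obtaining $f'_*g'^{-1}f^{-1}f_*\cF\to f'_*g'^{-1}\cF$. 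The composite of these three maps is the desired morphism $g^{-1}f_*\cF\to f'_*g'^{-1}\cF$.

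Naturality in $\cF$ holds because each step is a natural transformation. Equivalently, one can define the morphism as the adjoint, under $g^{-1}\dashv g_*$, of the map $f_*\cF\to g_*f'_*g'^{-1}\cF=f_*g'_*g'^{-1}\cF$. That map is $f_*$ applied to the unit $\cF\to g'_*g'^{-1}\cF$. This second description gives a direct check of canonicity: the two constructions agree by the triangle identities.

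The only point requiring care is the bookkeeping of the canonical isomorphism $(g f')^{-1}\simeq f'^{-1}g^{-1}$, so that the result is independent of choices. There is no real obstacle. The substantive statement, that this morphism is an isomorphism when $g$ is a covering map (used in Example~\ref{ex:basech}), is not part of this lemma. It would be checked stalkwise on evenly covered neighbourhoods, where $g'$ and $g$ are local homeomorphisms.
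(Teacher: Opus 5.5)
Your proof is correct, but your primary construction is the other of the two standard recipes for the base-change morphism. You go through the adjunctions for $f$ and $f'$: the unit $\id\to f'_*f^{\prime-1}$, the isomorphism $f^{\prime-1}g^{-1}\simeq g^{\prime-1}f^{-1}$, and the counit $f^{-1}f_*\to\id$. The paper uses exactly what you give as the ``equivalent'' description: apply $f_*$ to the unit $\id\to g'_*g^{\prime-1}$, use $f_*g'_*=g_*f'_*$, then apply $g^{-1}$ and the counit $g^{-1}g_*\to\id$.

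The paper's recipe is slightly more economical. Direct images of a composite agree on the nose (both sides send $\cF$ to $V\mapsto\cF(g^{\prime-1}f^{-1}V)$), so no coherence isomorphism enters. Yours needs the canonical isomorphism for inverse images of a composite, which is the bookkeeping you flag.

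Your claim that the two constructions coincide is true. On germs, both send the germ at $y'$ of $s\in\cF(f^{-1}V)$ to the germ at $y'$ of $g^{\prime-1}s$, viewed in $(g^{\prime-1}\cF)(f^{\prime-1}g^{-1}V)=(f'_*g^{\prime-1}\cF)(g^{-1}V)$. An abstract proof, however, uses more than the triangle identities: it also needs that the isomorphism $(gf')^{-1}\simeq f^{\prime-1}g^{-1}$ is the one compatible with the composite adjunctions. For the lemma itself, either construction alone suffices.

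One correction to your last paragraph. Proposition~\ref{prop:basech} and Example~\ref{ex:basech} assume that $f$, the map one pushes forward along, is the covering, with $g$ arbitrary. In Lemma~\ref{lem:rhoopsitilde}, for instance, $g$ is the inclusion $\partial\wt X(f)\hookrightarrow\wt X(f)$.

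That case is not a formal local check. Over an evenly covered open set with infinite fibres (as for $\RR\to S^1$), $f_*f^{-1}\cG$ is an infinite product of copies of $\cG$, and $g^{-1}$ need not commute with infinite products. This is why Property~(P) is required of both $\cG$ and $g^{-1}\cG$. The case where $g$ is a covering is indeed trivial, but it is not the substantive statement.
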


\begin{proof}
The adjunction morphism $\id\to g'_*\circ g^{\prime-1}$ induces a morphism
\[
f_*\to f_*\circ g'_*\circ g^{\prime-1}=g_*\circ f'_*\circ g^{\prime-1}.
\]
We deduce a morphism $g^{-1}\circ f_*\to g^{-1}\circ g_*\circ f'_*\circ g^{\prime-1}$, and using the adjunction $g^{-1}\circ g_*\to\id$, we~obtain the desired morphism.
\end{proof}

For a sheaf $\cG$ on $Y$, we~consider the following property:
\begin{enumerate}
\item[(P)]
Each point $y\in Y$ admits a fundamental system $\mathfrak{V}_y$ of open neighborhoods such that, for each $V\in \mathfrak{V}_y$, the natural morphism $\Gamma(V,\cG)\to\cG_y$ is an isomorphism.\end{enumerate}
We say that a bounded complex $\cG^\cbbullet$ satisfies Property (P) if all its cohomology sheaves do so.

\begin{proposition}\label{prop:basech}
Assume moreover that $f$ is a covering map. Let $\cG^\cbbullet$ be a complex of sheaves on $Y$ satisfying Property \textup{(P)}, as well as $g^{-1}\cG^\cbbullet$ on $Y'$. Then the natural morphism
\begin{starequation}\label{eq:basech}
g^{-1}\circ \bR f_*(f^{-1}\cG^\cbbullet)\to \bR f'_*\circ g^{\prime-1}(f^{-1}\cG^\cbbullet)
\end{starequation}%
is an isomorphism.
\end{proposition}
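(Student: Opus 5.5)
The plan is to check the isomorphism \eqref{eq:basech} on stalks at every point $y'\in Y'$, using the fact that $f$ is a covering. This lets us compute both sides explicitly as products over the fibres.

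First we reduce to a single sheaf. Both sides of \eqref{eq:basech} are triangulated functors of $\cG^\cbbullet$, and the morphism is natural. Assuming the complex is bounded (as in the applications), induction on the amplitude with the truncation triangles reduces us to a single sheaf $\cG$ satisfying (P), with $g^{-1}\cG$ also satisfying (P). The usual five-lemma argument applies to the triangles $\tau_{\le k}\to\tau_{\le k+1}\to\cH^{k+1}[-k-1]$.

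Next we compute the left-hand side at $y=g(y')$. Choose a fundamental system of open neighbourhoods $V\in\mathfrak V_y$ that are connected and evenly covered by $f$; we may shrink the system of (P) to such $V$, since $Y$ is locally connected in our applications. Then $f^{-1}(V)\simeq\bigsqcup_{\alpha}V_\alpha$ with $V_\alpha\isom V$, so
\[
R^kf_*(f^{-1}\cG)_y=\varinjlim_V\prod_\alpha H^k(V_\alpha,f^{-1}\cG)=\varinjlim_V\prod_\alpha H^k(V,\cG).
\]
Here we need $H^k(V,\cG)=0$ for $k>0$ on a cofinal system, together with $\Gamma(V,\cG)=\cG_y$. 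This vanishing is the main obstacle. Property (P) gives only degree $0$ directly, so the plan is to deduce the higher vanishing as well: if $\Gamma(V,\cG)\to\cG_y$ is bijective for all small $V$, then $\cG|_V$ behaves like a constant sheaf on a contractible neighbourhood. I would try to establish this by replacing $\mathfrak V_y$ with contractible $V$ (a conic structure for $\RR$-constructible sheaves) on which $\cG|_V$ is constant, since $\Gamma(V,\cG)=\cG_y$ forces every restriction map $\cG_y\to\cG_z$ to be an isomorphism. If this fails in general, we use the $\RR$-constructibility available in the intended applications (Lemma~\ref{lem:rhoopsitilde}, Lemma~\ref{lem:ixloccst}), where $\bR\Gamma(V,\cG)\simeq\cG_y$ for small subanalytic open balls \cite[Chap.\,8]{K-S90}. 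We then get $g^{-1}\bR f_*f^{-1}\cG$ at $y'$ equal to $\prod_{\alpha\in f^{-1}(y)}\cG_y$ in degree $0$.

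The right-hand side is handled in the same way. The map $f'$ is again a covering, being a base change of a covering, with fibre $f'^{-1}(y')\simeq f^{-1}(y)$. Moreover $g'^{-1}f^{-1}\cG=f'^{-1}g^{-1}\cG$. Applying the same computation to $g^{-1}\cG$, which satisfies (P) by hypothesis, gives $\prod_{\alpha}(g^{-1}\cG)_{y'}=\prod_\alpha\cG_y$ in degree $0$. It remains to verify that, under these identifications, the base change morphism built from the adjunctions is the identity on $\prod_\alpha\cG_y$. This follows by tracing a section over $f^{-1}(V)$ to its restriction over $f'^{-1}(V')$ for $V'\subset g^{-1}(V)$. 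Hence \eqref{eq:basech} is an isomorphism on all stalks.
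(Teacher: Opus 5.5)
The gap is exactly the step you flag as the main obstacle, and your proposed way around it does not work. Your reduction to a single sheaf via truncation triangles is fine. So is your degree-$0$ stalk computation: both sides have stalk $\prod_{\alpha\in f^{-1}(y)}\cG_y$ at $y'$. This has the same content as the paper's étalé-space argument. (Connectedness is not needed: members of $\mathfrak V_y$ lying in an evenly covered open set are themselves evenly covered.)

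The missing step is the vanishing of $\varinjlim_V\prod_\alpha H^k(V,\cG)$ for $k>0$, and likewise for $g^{-1}\cG$. Property (P) does not make the restriction maps $\cG_y\to\cG_z$ isomorphisms: take $\cG=\CC_{[0,\infty)}$ on $\RR$ at $y=0$. Nothing in (P) makes the neighbourhoods acyclic either.

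More seriously, for an infinite fibre this vanishing is not a consequence of (P) at all, because classes on different sheets may need arbitrarily small neighbourhoods to die. Here is a counterexample.
\begin{itemize}
\item Let $Y\subset\RR^2$ be the Hawaiian earring, made of circles $C_n$ of radius $1/n$ through $y_0=0$.
\item Let $\cG=\CC_Y$, let $f:Y\times\NN\to Y$ be the projection, and let $g$ be the inclusion of $y_0$.
\end{itemize}
Both $\cG$ and $g^{-1}\cG$ satisfy (P). On the $n$-th sheet, take the class in $H^1(V;\CC)$ pulled back from $C_n$ by the retraction $V\to C_n$ that collapses the rest of $V$ to $y_0$ (zero when $C_n\not\subset V$). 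No neighbourhood of $y_0$ kills all these classes, so $(R^1f_*f^{-1}\cG)_{y_0}=\varinjlim_V\prod_nH^1(V;\CC)\neq0$. The right-hand side of \eqref{eq:basech}, however, is $\prod_\NN\CC$ in degree~$0$.

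So no argument from (P) alone can close this step, and the paper's proof does not close it either. It passes to complexes by asserting that, since $f$ and $f'$ are coverings, $\cH^j\bR f_*(f^{-1}\cG^\bullet)=f_*(f^{-1}\cH^j\cG^\bullet)$. This is automatic for finite coverings. For the infinite coverings induced by $\RR\to S^1$, which are the ones used in Lemmas~\ref{lem:rhoopsitilde} and~\ref{lem:ixloccst}, it is precisely the uniform vanishing you isolated.

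The right repair is your fallback, promoted to a hypothesis. Require fundamental systems of neighbourhoods with $\bR\Gamma(V,\cG)\simeq\cG_y$ and $\bR\Gamma(V',g^{-1}\cG)\simeq(g^{-1}\cG)_{y'}$. This holds for weakly $\RR$-constructible complexes and their real-analytic pullbacks: the triangulation result \cite[Prop.\,8.1.4]{K-S90} invoked in Example~\ref{ex:basech} gives this derived form, not merely (P). With that hypothesis your stalkwise computation is a complete proof of what Example~\ref{ex:basech} and its applications need. It proves the proposition under this stronger hypothesis, not as stated.
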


\begin{proof}
Assume first that $\cG^\cbbullet$ is a sheaf $\cG$. Since the question is local on $Y$, we~may assume that $f$ is the projection $X=Y\times F\to Y$, where $F$ is a discrete set. Then $f^{-1}\cG$ also satisfies property (P) at each point $x\in X$ with $\mathfrak{V}_x=\mathfrak{V}_{f(x)}$. Set $\cF=f^{-1}\cG$ and denote by $\mu:\cF^{\textup{ét}}\to X$ the sheaf space associated with $\cF$. Then Property (P) implies that $f\circ\mu:\cF^{\textup{ét}}\to Y$ is the sheaf space associated with $f_*\cF$.

On the other hand, the sheaf space of $g^{\prime-1}\cF=f^{\prime-1}(g^{-1}\cG)$ is by definition \hbox{$\mu':(g^{\prime-1}\cF)^{\textup{ét}}=X'\bigtimes_X\cF^{\textup{ét}}\to X'$}, and applying Property (P) to $g^{-1}\cG$ we find that the sheaf space of $f'_*g^{-1}\cG$ is $f'\circ\mu':X'\bigtimes_X\cF^{\textup{ét}}\to Y'$. Since we have a Cartesian square, we~identify the latter with $Y'\bigtimes_Y\cF^{\textup{ét}}\to Y'$, as desired.

For an arbitrary bounded complex $\cG^\cbbullet$ as in the statement, we~note that, since $f$ and $f'$ are covering maps, we~have
\begin{align*}
\cH^jg^{-1}\circ \bR f_*(f^{-1}\cG^\cbbullet)&=g^{-1}\circ f_*(f^{-1}\cH^j\cG^\cbbullet),\\
\cH^j\bR f'_*\circ g^{\prime-1}(f^{-1}\cG^\cbbullet)&= f'_*\circ g^{\prime-1}(f^{-1}\cH^j\cG^\cbbullet),
\end{align*}
so we can apply the first part of the proof.
\end{proof}

\begin{exemple}\label{ex:basech}
Assume that $g:Y'\to Y$ is a morphism of real analytic manifolds and that $\cG^\cbbullet$ is weakly $\RR$-constructible (\cf\cite[Def.\,8.4.3]{K-S90}). Then so is $g^{-1}\cG^\cbbullet$, and both satisfy Property (P), by \cite[Prop.\,8.1.4]{K-S90}. Therefore, if $f$ is a covering map, the morphism \eqref{eq:basech} is an isomorphism.
\end{exemple}

\section{Proof of the results in Section~\ref{subsec:sheavesblup}}\label{app:mochizuki}
For the sake of completeness, we~indicate how to use the results of \cite[Chap.\,4\,\&\,5]{Mochizuki10} to obtain those stated in Section~\ref{sec:sheavesblup}.

Let $f:X\to\CC$ be a holomorphic function. Set $X_0=f^{-1}(0)$. Let $\wt\CC$ denote the real blow-up of $\CC$ at the origin. The product $X\times\CC$ is denoted by $\ccX$, the real blowing-up map along $X\times\{0\}$ by $\varpi:\wt\ccX=X\times\wt\CC\to\ccX$ and the open subset $X\times\CC^*$ by $\ccX^*$. For a complex manifold $Y$ with a normal crossing divisor $D_Y$ having smooth components, we~denote by $\wt Y(D_Y)$ the real blow-up of each component of $D_Y$.

The closure of $\gammaf(X^*)$ in $\wt\ccX$ is identified with the real blow-up $\wt X(f)$ of $X$ along $f$. We~have a commutative diagram
\begin{equation}\label{eq:diagblowup}
\begin{array}{c}
\xymatrix@C=1.2cm{
&\wt X(f)\ar@{^{ (}->}[r]^-{\wt\gammaf}\ar[d]^{\varpi_f}&\wt\ccX\ar[d]^\varpi\\
X^*\ar@{^{ (}->}[r]^-{j_f}\ar@{^{ (}->}[ur]^(.5){\wtj_f}
\ar@<-.5ex>@/_2pc/@{^{ (}->}[rrr]^(.53){\gammaf\circ j_f}
&X\ar@{^{ (}->}[r]^-{\gammaf}&\ccX&\ccX^*\ar@{_{ (}->}[l]_(.53){j}\ar@{_{ (}->}[ul]_(.53){\wtj}
}
\end{array}
\end{equation}

In the following, we~consider $\cO_{\gammaf(X)}$ as a sheaf on $X\times\CC$ supported on $\gammaf(X)$. We~also often omit the restriction functor $\gammaf^{-1}$ or the pushforward functor ${\gammaf}_*$ when the context is clear, in order to lighten the notation. The first result clarifies the relation between the sheaves $\cA_{\wt\ccX}^\Star$ and $\cA_{\wt X(f)}^\Star$.

\begin{proposition}[{\cite[Th.\,4.5.1]{Mochizuki10}}]\label{prop:451}
For all values of $\Star$, the following holds:
\begin{enumerate}
\item\label{prop:4511}
The derived tensor product $\varpi^{-1}\cO_{\gammaf(X)}\otimes^{\bL}_{\varpi^{-1}\cO_{\ccX}}\cA_{\wt\ccX}^\Star$ has cohomology only in degree zero, and is supported on $\wt X(f)$.
\item\label{prop:4512}
Let $e:Y\to X$ be a birational morphism which induces an isomorphism \hbox{$Y\moins e^{-1}(X_0)\isom X\moins X_0$} and such that $D_Y:=e^{-1}(X_0)$ has normal crossings with smooth components, and let $e:\wt Y(D_Y)\to\wt\ccX$ be the induced morphism. Then $R^ke_*\cA_{\wt Y(D_Y)}^\Star=0$ for $k>0$ and the natural morphism
\[
\varpi^{-1}\cO_{\gammaf(X)}\otimes_{\varpi^{-1}\cO_{\ccX}}\cA_{\wt\ccX}^\Star\to e_*\cA_{\wt Y(D_Y)}^\Star
\]
induced by $e^*$ is an isomorphism.
\end{enumerate}
\end{proposition}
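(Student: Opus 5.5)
We plan to reduce both assertions to the normal crossing situation, where they follow from the explicit description of moderate growth and rapid decay for monomial functions. The main input is Mochizuki's pushforward theorem \cite[Th.\,4.1.5]{Mochizuki10}, used in its projective form \ref{eq:pushpiA}. Fix $e:Y\to X$ as in \eqref{prop:4512}: birational, an isomorphism off $X_0$, with $D_Y=e^{-1}(X_0)$ a normal crossing divisor with smooth components. We also use the induced map $\wt e:\wt Y(D_Y)\to\wt\ccX$. This map factors through the closure of $\gammaf(X^*)=\gamma_g(Y^*)$, where $g=f\circ e$, so it takes values in $\wt X(f)\subset\wt\ccX$.

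\emph{Step 1: the normal crossing case.} First suppose $f$ is locally a monomial $\bmx^\bme$ in adapted coordinates on $X$ itself. We compute $\varpi^{-1}\cO_{\gammaf(X)}\otimes^{\bL}\cA^\Star_{\wt\ccX}$ using the Koszul resolution of $\cO_{\gammaf(X)}$ by the single element $t-f(x)$. Assertion \eqref{prop:4511} then amounts to injectivity of multiplication by $t-f$ on $\cA^\Star_{\wt\ccX}$. For $\Star\in\{*,\rmod,\rrd\}$ this holds because a holomorphic function on a sector of $X\times\CC^*$ vanishing on the graph, divided by $t-f$, stays holomorphic. For the quotient sheaves $\Star\in\{\ssup\rmod,\ssup\rrd,\rmodrd\}$, injectivity means the following: if $(t-f)h$ is moderate (resp.\ rapid decay), then $h$ is so as well. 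We prove this with a Cauchy-integral / maximum-principle estimate in the $t$-variable on a slightly larger sector, in the spirit of \cite[Lem.\,4.4.1]{Mochizuki10}.

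Support in $\wt X(f)$ is immediate: away from the graph, $t-f$ is invertible. Near a boundary point $(x,e^{i\theta})$ with $\theta\neq\arg f$ on nearby points, $t-f$ is bounded below by a power of $|t|$ on a small sector, hence invertible in $\cA^\rmod$ and $\cA^\rrd$.

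\emph{Step 2: reduction for \eqref{prop:4512}.} Apply Step 1 on $Y$ to the function $g$. Using \ref{subsec:sheavesblupD2}, $\cA^\Star_{\wt Y(g)}=\bR\varpi_{D_Y,g*}\cA^\Star_{\wt Y(D_Y)}$, the map $\wt e$ factors through $\wt Y(g)$. The vanishing $R^k\wt e_*\cA^\Star_{\wt Y(D_Y)}=0$ for $k>0$ then reduces to $R^k$ of the proper map $\wt Y(g)\to\wt X(f)$. Over $X^*$ this map is an isomorphism. Along the boundary, we compute its higher direct images via the flatness established in Step 1 for $\gamma_g$ together with \ref{eq:pushpiA}: $R^k e_*\cO_Y(*D_Y)=0$ for $k>0$, and $e_*\cO_Y(*D_Y)=\cO_X(*X_0)$. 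Since all our sheaves are $\cO(*)$-modules, the derived pushforward of $\cA^\Star_{\wt Y(g)}$ is concentrated in degree $0$.

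The comparison morphism $\varpi^{-1}\cO_{\gammaf(X)}\otimes\cA^\Star_{\wt\ccX}\to\wt e_*\cA^\Star_{\wt Y(D_Y)}$ is given by pullback of functions. Injectivity is clear because both sides restrict to $\cO_{X^*}$ on sectors. For surjectivity we must show the following. A function holomorphic on $V\cap X^*$ (for $V$ a sectorial neighborhood in $\wt X(f)$) that has moderate growth or rapid decay in terms of $|g|$ and the normal crossing coordinates upstairs extends to a function on a sector of $\wt\ccX$ with the same growth property modulo $t-f$. Since $|g|=|f|\circ e$ and growth conditions are comparable under the proper map $e$ (by the Łojasiewicz inequality), we can extend along $t$ by the Stein/Theorem~B argument of \ref{subsec:sheavesblup0} combined with weighted $L^2$ ($\ov\partial$) estimates with polynomial weights in $|t|$.

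\emph{Main obstacle.} The hardest step will be this extension-with-growth in Step 2, namely producing a preimage in $\cA^\rrd_{\wt\ccX}$ or $\cA^\rmod_{\wt\ccX}$ from data known only on the graph. I would handle it first for $\rmod$ via Hörmander $L^2$ estimates on the Stein sectorial domains $U^*\times\Sigma^*$ with weights $|t|^{-2N}$. The $\rrd$ case would follow by applying this to every $N$ and using that the kernel of $t-f$ was already controlled in Step 1. The quotient cases then follow from the five lemma.
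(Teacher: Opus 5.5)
Your Step~1 is correct and proves (1) for every value of $\Star$, with no normal crossing hypothesis. The derived tensor product is the complex $[\cA^\Star_{\wt\ccX}\To{t-f}\cA^\Star_{\wt\ccX}]$. For $\Star\in\{*,\rmod,\rrd\}$, injectivity just says that $t-f$ is not a zero divisor; your sentence about dividing by $t-f$ concerns divisibility, not injectivity. The maximum principle on circles $|t'-f(x)|=c|t|$ handles the three quotient sheaves. Support is automatic, since $\varpi^{-1}\cO_{\gammaf(X)}$ lives on $\varpi^{-1}(\gammaf(X))=\wt X(f)$. This is more direct than the paper, which deduces (1) for the quotients from (2). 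The same division estimate is also what gives injectivity in (2). The remark that ``both sides restrict to $\cO_{X^*}$'' does not, since it is meaningless for $\cA^\Star_{\wt\ccX}/(t-f)\cA^\Star_{\wt\ccX}$ at boundary points.

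The genuine gap is the vanishing $R^k\wt e_*\cA^\Star_{\wt Y(D_Y)}=0$ for $\Star=\rmod,\rrd$, which, together with surjectivity, is the real content of (2). You take it from \ref{subsec:sheavesblupD2} and \ref{eq:pushpiA}--\ref{eq:pusheA}, but in the paper's logic these are not available. The paper attributes \ref{subsec:sheavesblupD2} to \cite[Th.\,4.5.1]{Mochizuki10}, the very statement you are proving; it is essentially (2) for $e=\id_Y$ combined with Corollary~\ref{cor:453}. Likewise, \ref{eq:pushpiA} is Mochizuki's general pushforward theorem, which reappears as Corollary~\ref{cor:454} after Proposition~\ref{prop:451}. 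So the argument is circular. Moreover, passing from \ref{eq:pushpiA} to \ref{eq:pusheA} uses the $\varpi_f^{-1}\cO_X$-flatness of $\cA^\Star_{\wt X(f)}$. Step~1 did not prove this: it only gives Tor-independence of $\cO_{\gammaf(X)}$ and $\cA^\Star_{\wt\ccX}$, and despite its title it contains nothing specific to normal crossings. What is missing is a genuine analytic argument for this vanishing. For example, one could first treat $e=\id_Y$ with $g$ monomial on the manifold with corners $\wt Y(D_Y)$, using Dolbeault resolutions with growth conditions, and then push forward along $e\times\id$. The paper does not do this either: it takes $\Star\in\{\rmod,\rrd\}$ from \cite{Mochizuki10} and only argues the quotient cases, as your five-lemma step does.

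The rapid-decay extension is also not established. Hörmander's estimate with weight $|t|^{\pm 2N}$ gives, for each $N$, an extension $h_N$ that is $O(|t|^N)$, but these extensions depend on $N$. Knowing $h_N-h_{N'}\in(t-f)\cA^\rmod_{\wt\ccX}$ does not produce a single rapid-decay extension. Write $\cA^{(N)}$ for germs that are $O(|t|^N)$, and let $h$ be any moderate extension. You only get $h\in\bigcap_N\bigl(\cA^{(N)}+(t-f)\cA^\rmod_{\wt\ccX}\bigr)$. By Step~1, however, a rapid-decay extension exists if and only if $h\in\cA^\rrd_{\wt\ccX}+(t-f)\cA^\rmod_{\wt\ccX}$. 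Closing this requires an extra input. One option is the Dolbeault--Grothendieck lemma with rapid-decay coefficients on $X\times\wt\CC$, applied to $\ov\partial(\chi\phi)/(t-f)$ for a cutoff extension $\chi\phi$ of the datum $\phi$; another is a Mittag-Leffler argument. Finally, solve on $U\times\Sigma^*$, not $U^*\times\Sigma^*$. Germs of $\cA^\Star_{\wt\ccX}$ at points over $X_0$ must be holomorphic across $X_0\times\Sigma^*$, because the divisor on $\ccX$ is $X\times\{0\}$.
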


\begin{proof}
The values $\Star\in\{\rmod,\rrd\}$ are treated in \loccit\ (case $\ell=1$ there), and the value $\Star=*$ is obtained in a very similar way. Proving \eqref{prop:4511} for $\Star\in\{\ssup\rmod,\ssup\rrd,\rmod/\rrd\}$ amounts to proving the injectivity of
\[
\varpi^{-1}\cO_{\gammaf(X)}\otimes_{\varpi^{-1}\cO_{\ccX}}\cA_{\wt\ccX}^\rmod\to\varpi^{-1}\cO_{\gammaf(X)}\otimes_{\varpi^{-1}\cO_{\ccX}}\cA_{\wt\ccX}^*
\]
and similarly for the pairs $(\rrd,*)$ and $(\rrd,\rmod)$. This follows from \eqref{prop:4512} for these pairs. On the other hand, \eqref{prop:4512} for $\Star\in\{\ssup\rmod,\ssup\rrd,\rmod/\rrd\}$ follows directly from \eqref{prop:4512} for $\Star\in\{*,\rmod,\rrd\}$.
\end{proof}

\begin{corollaire}[{\cite[Th.\,4.5.3]{Mochizuki10}}]\label{cor:453}
For all values of $\Star$, the natural morphism
\[
\varpi^{-1}\cO_{\gammaf(X)}\otimes_{\varpi^{-1}\cO_{\ccX}}\cA_{\wt\ccX}^\Star\to\wt\gamma_{f*}\cA_{\wt X(f)}^\Star
\]
is an isomorphism.
\end{corollaire}

\begin{proof}
For $\Star\in\{*,\rmod,\rrd\}$, we~have a natural identification $\rho_*\cA_{\wt Y(D_Y)}^\Star\simeq\wt\gamma_{f*}\cA_{\wt X(f)}^\Star$. For the remaining cases, \eg for $\Star=\ssup\rmod$, $\rho_*\cA_{\wt Y(D_Y)}^{\ssup\rmod}$ is identified with the cokernel of $\rho_*\cA_{\wt Y(D_Y)}^\rmod\to\rho_*\cA_{\wt Y(D_Y)}^*$, according to Proposition~\ref{prop:451}\eqref{prop:4512}, and by definition $\wt\gamma_{f*}\cA_{\wt X(f)}^{\ssup\rmod}$ is the cokernel of $\wt\gamma_{f*}\cA_{\wt X(f)}^\rmod\to\wt\gamma_{f*}\cA_{\wt X(f)}^*$, hence the result.
\end{proof}

Let $\pi:Y\to X$ be a morphism of complex manifolds and set $g=f\circ\pi$. It~can be extended in a unique way as a morphism $\wt\pi:\wt Y(g)\to\wt X(f)$.

\begin{corollaire}[{\cite[Th.\,4.4.3\,\&\,Th.\,4.5.4]{Mochizuki10}}]\label{cor:454}
Let $\pi:Y\to X$ be a projective morphism and let $\cN$ be an inductive limit of coherent $\cO_Y$-modules. Then, for any values of $\Star$, the natural morphism
\[
\cA_{\wt X(f)}^\Star\otimes^{\bL}_{\varpi_f^{-1}\cO_X}\varpi_f^{-1}\bR\pi_*\cN\to\bR\wt\pi_*\bigl(\cA_{\wt Y(g)}^\Star\otimes^{\bL}_{\varpi_g^{-1}\cO_Y}\varpi_g^{-1}\cN\bigr)
\]
is an isomorphism.\qed
\end{corollaire}

\begin{theoreme}[Flatness, {\cite[Th.\,4.6.1]{Mochizuki10}}]
For all values of $\Star$, the sheaves $\cA_{\wt X(f)}^\Star$ are $\varpi_f^{-1}\cO_X$-flat. Furthermore, for any coherent $\cO_X$-module $\cM$, and for $\Star\in\{\rmod,\rrd\}$, the natural morphism
\[
\cA_{\wt X(f)}^\Star\otimes_{\varpi_f^{-1}\cO_X}\varpi_f^{-1}\cM\to\wtj_{f*}\cO_{X^*}\otimes_{\varpi_f^{-1}\cO_X}\varpi_f^{-1}\cM=\wtj_{f*}\cM_{|X^*}
\]
is injective.
\end{theoreme}

\begin{proof}
The theorem is proved in \loccit\ for $\Star\in\{\rmod,\rrd\}$. Flatness in the case \hbox{$\Star=*$} is~easy: it amounts to proving that $\bR\wtj_{f*}\wtj_f^{-1}\cM=\wtj_{f*}\wtj_f^{-1}\cM$ for any coherent $\cO_X$\nobreakdash-module~$\cM$, and this follows from the argument used to prove \eqref{subsec:sheavesblup0} in Section~\ref{subsec:sheavesblup}. Lastly, flatness in the remaining cases, expressed as the vanishing the cohomology of $\cA_{\wt X(f)}^\Star\otimes^{\bL}_{\varpi_f^{-1}\cO_X}\varpi_f^{-1}\cM$ in negative degrees for any coherent $\cO_X$-module $\cM$, follows from both statements in the cases $\Star\in\{*,\rmod,\rrd\}$ by the snake lemma.
\end{proof}

For the following results, we~consider the case of a right $\cD_Y$-module $\cM$ for simplicity and we use the Spencer complex $\Sp$.

\begin{corollaire}[{\cite[Prop.\,4.7.1]{Mochizuki10}}]\label{cor:471}
Setting as in Corollary \ref{cor:454}. For any coherent $\cD_Y$\nobreakdash-module $\cM$ having a coherent filtration (locally with respect to $X$), there is a natural isomorphism in $\catD^\rb(\cD_{\wt X(f)}^\Star)$:
\begin{starequation}\label{eq:471}
\cA_{\wt X(f)}^\Star\otimes^{\bL}_{\varpi_f^{-1}\cO_X}\!\Dm\pi_*\cM\to\bR\wt\pi_*\bigl((\cA_{\wt Y(g)}^\Star\otimes_{\varpi_g^{-1}\cO_Y}\!\varpi_g^{-1}\cM)\otimes_{\varpi_g^{-1}\cD_Y}\!\varpi_g^{-1}\Sp_{Y\to X}\bigr).
\end{starequation}%
\end{corollaire}

\begin{proof}
We can replace $\cM$ by its canonical resolution by induced $\cD_Y$-modules \hbox{$\cM\otimes_{\cO_Y}\Sp_Y$}, so~that we can assume that $\cM=\cN\otimes_{\cO_Y}\cD_Y$, where $\cN$ is an inductive limit of coherent $\cO_Y$-modules, due to the assumption of existence of a coherent filtration. Then $\Dm\pi_*\cM=\bR\pi_*\cN\otimes_{\cO_X}\cD_X$ and the natural morphism
\[
\cA_{\wt X(f)}^\Star\otimes^{\bL}_{\varpi_f^{-1}\cO_X}\varpi_f^{-1}(\Dm\pi_*\cM)\to\bR\wt\pi_*\bigl(\cA_{\wt Y(g)}^\Star\otimes^{\bL}_{\varpi_g^{-1}\cO_Y}\varpi_g^{-1}\cM\bigr)
\]
is an isomorphism by Corollary~\ref{cor:454}. Due to the $\varpi_g^{-1}\cO_Y$-flatness of $\cA_{\wt Y(g)}^\Star$, the latter term is equal to the right-hand side of \eqref{eq:471} with $\cM=\cN\otimes_{\cO_Y}\cD_Y$.
\end{proof}

\begin{corollaire}[{\cite[Cor.\,4.7.3]{Mochizuki10}}]\label{cor:473}
Setting as in Corollary \ref{cor:454}. For any coherent $\cD_Y$-module $\cM$ there is a functorial isomorphism
\[
\pDR^\Star_{\wt X(f)}\Dm\pi_*\cM\simeq\bR\wt\pi_*\pDR^\Star_{\wt Y(g)}\cM.
\]
\end{corollaire}

\begin{proof}
We tensor \eqref{eq:471} on the right by $\varpi_f^{-1}\Sp_X$ and we apply the projection formula to the right-hand side:
\begin{multline*}
\bR\wt\pi_*\Bigl[\cA_{\wt Y(g)}^\Star\otimes_{\varpi_g^{-1}\cO_Y}\varpi_g^{-1}\bigl(\cM\otimes_{\cD_Y}\Sp_{Y\to X}\bigr)\Bigr]\otimes_{\varpi_f^{-1}\cD_X}\varpi_f^{-1}\Sp_X\\
\isom
\bR\wt\pi_*\Bigl[\cA_{\wt Y(g)}^\Star\otimes_{\varpi_g^{-1}\cO_Y}\varpi_g^{-1}\bigl(\cM\otimes_{\cD_Y}\Sp_{Y\to X}\otimes_{\pi^{-1}\cD_X}\pi^{-1}\Sp_X\bigr)\Bigr]\\
\simeq
\bR\wt\pi_*\Bigl[\cA_{\wt Y(g)}^\Star\otimes_{\varpi_g^{-1}\cO_Y}\varpi_g^{-1}\bigl(\cM\otimes_{\cD_Y}\Sp_{Y}\bigr)\Bigr]=\bR\wt\pi_*\pDR^\Star_{\wt Y(g)}\cM.\qedhere
\end{multline*}
\end{proof}

\section{Results about duality}\label{app:C}

We prove a special case of Conjecture~\ref{conj:DRdual}. Let $f,g:X\to\CC$ be holomorphic functions and let $V$ be a meromorphic flat bundle on $X$ with poles along $f^{-1}(0)\cup g^{-1}(0)$. By~definition, we~have $V=V(*f,*g)$. We~denote by $V^\vee$ the dual meromorphic flat bundle.

\begin{proposition}\label{prop:duality}
Locally on $X$, there exist isomorphisms
\begin{align*}
\bD\pDR^\rmod_{\wt X(f)}V&\simeq\pDR^\rrd_{\wt X(f)}V^\vee(!g),\\
\bD\pDR^\rrd_{\wt X(f)}V&\simeq\pDR^\rmod_{\wt X(f)}V^\vee(!g)
\end{align*}
compatible with the natural morphisms from rapid decay to moderate de Rham complexes and inducing the natural isomorphisms existing on $X^*$.
\end{proposition}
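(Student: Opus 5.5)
We will prove Proposition~\ref{prop:duality} by reducing to the normal crossing case with a good formal structure and then invoking Mochizuki's duality for Nilsson/rapid-decay de~Rham complexes on real blow-ups, in the form of \cite{Mochizuki10}. The statement is local on $X$, so we may fix $x_o\in X$ and work in a small neighborhood. By the Kedlaya--Mochizuki resolution, there is a projective modification $e:(Y,D)\to(X,P)$, with $P=f^{-1}(0)\cup g^{-1}(0)$, such that $D=e^{-1}(P)$ has normal crossings with smooth components and $V':=\Dm e^*V$ has a good formal structure along $D$. Write $D=D_f\cup D_{g}'$, with $D_f=(f\circ e)^{-1}(0)$ and $D'_g$ the remaining components, which lie over $g^{-1}(0)$. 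Then $V=\Dm e_*V'$ and $V^\vee(!g)=\Dm e_*(V'^\vee(!D'_g))$, because $\Dm e_*$ commutes with duality for the proper map $e$ and $V'^\vee$ is the meromorphic dual of $V'$.

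Using Corollary~\ref{cor:473} together with the factorization \eqref{eq:Dg} through $\varpi_{D,f\circ e}:\wt Y(D)\to\wt Y(f\circ e)$, we rewrite both sides as proper pushforwards. On one side we get $\bR\wt\epsilon_*\pDR^{\rmod D}_{\wt Y(D)}V'$. On the other side we get $\bR\wt\epsilon_*\pDR^{\rrd D_f,\,!D'_g}_{\wt Y(D)}V'^\vee$, where along $D'_g$ the condition imposed is the $!$-type one, that is, rapid decay after dualizing. Since $\wt\epsilon$ is proper, Verdier duality commutes with $\bR\wt\epsilon_*$. It therefore suffices to construct, on the manifold with corners $\wt Y(D)$, an isomorphism
\[
\bD\pDR^{\rmod D}_{\wt Y(D)}V'\simeq\pDR^{\rrd D}_{\wt Y(D)}V'^\vee,
\]
and symmetrically with $\rmod$ and $\rrd$ exchanged. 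Here we have used that, for $V'^\vee(!D'_g)$, the rapid-decay de~Rham complex along $D'_g$ agrees with that of $V'^\vee$, and the moderate one along $D_f$ is the usual one.

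By Theorem~\ref{th:drmod}, both complexes are concentrated in a single degree. Their $\cH^0$'s are the subsheaves $\cL^{\rmod}\subset\cL$ and $\cL^{\vee,\rrd}\subset\cL^\vee$ of the local systems of horizontal sections on $\wt Y(D)$, and these are $\RR$-constructible with respect to the Stokes stratification of Section~\ref{subsec:Icovering}. The duality pairing $\cL\otimes\cL^\vee\to\CC$ sends $\cL^{\rmod}\otimes\cL^{\vee,\rrd}$ into rapid-decay functions, which vanish on $\partial\wt Y(D)$. Hence it induces a pairing
\[
\cL^{\rmod}\otimes\cL^{\vee,\rrd}\to\wtj_!\CC_{Y^*}\simeq\omega_{\wt Y(D)}[-2n],
\]
the last identification coming from orientation of the manifold with corners. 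This pairing gives a morphism $\pDR^{\rrd}V'^\vee\to\bD\pDR^{\rmod}V'$ compatible with the natural maps and equal to the standard duality on $Y^*$. It is exactly Mochizuki's duality statement for Nilsson/rapid-decay complexes, which we can quote thanks to Remark~\ref{rem:Nilssonmod}.

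\textbf{Checking that it is an isomorphism.} This is a local question at boundary points. After a ramification $\rho_{\bmd}$, which is harmless because $V'$ is a direct summand of $\Dm\rho_{\bmd*}\Dm\rho_{\bmd}^*V'$, the Hukuhara--Turrittin decomposition on small multisectors reduces us to $\cE^\varphi\otimes\cR$ with $\varphi$ purely monomial and $\cR$ regular of rank one. In that case $\cL^{\rmod}$ is the extension by zero of a constant sheaf from the open set $\{\operatorname{Re}\varphi<0\}\cup$(interior), and $\cL^{\vee,\rrd}$ is the extension by zero from the closed-complement situation for $-\varphi$. The required statement is then the elementary Verdier duality between $\CC_U$ and $\CC_{\overline{Z}}$-type sheaves on $(S^1)^\ell\times\RR_+^\ell\times\CC^{n-\ell}$, which is a topological computation.

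The main obstacle is the sectorial reduction. The Stokes filtration is not split globally, only on small multisectors, so we must first build the global morphism through the pairing above and only then verify the isomorphism locally, where splitting is available. A further difficulty is the careful handling of the mixed condition along $D'_g$, namely moderate growth for $V$ against $!$ for $V^\vee$; this is where the twist $(!g)$ enters. Functoriality is not claimed, which matches the ``locally'' in the statement.
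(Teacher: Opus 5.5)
Your overall plan matches the paper's: resolve to a good normal crossing situation, invoke Mochizuki's duality on $\wt Y(D)$, then push forward. However, the step carrying the new content is only asserted. Formula \eqref{eq:Dg} applies to meromorphic flat bundles, and it produces \emph{moderate growth} along the components $D'_g$ not lying over $f^{-1}(0)$. It says nothing about $V'^\vee(!D'_g)$, which is not meromorphic along $D'_g$. What you need is that imposing rapid decay along $D'_g$ on the real blow-up computes the $!$-extension, namely $\bR\varpi_{D,D_f*}\pDR^{\rrd D}_{\wt Y(D)}V'^\vee\simeq\pDR^{\rrd D_f}_{\wt Y(D_f)}\bigl(V'^\vee(!D'_g)\bigr)$, and similarly with moderate growth along $D_f$. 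Your sentence saying the rapid-decay complex ``agrees with that of $V'^\vee$'' does not establish this. In the paper this is Proposition~\ref{prop:C6}. By Lemma~\ref{lem:identrho}, $\bR\wt\varpi_*\cA^{<D}_{\wt Y(D)}$ is the two-term complex $\cA^{<D_1}_{\wt Y(D_1)}\to\cA^{<D_1}_{\wt Y(D_1)|\wh D_2}$. So after tensoring with $V(!D_2)$ one gets $\DR^{<D_1}_{\wt Y(D_1)}(V(!D_2))$ up to the formal term $\DR^{<D_1}_{\wt Y(D_1)|\wh D_2}(V(!D_2))$. That formal term vanishes by Lemma~\ref{lem:vanishing}, which shows some $\partial_{x_i}$ is bijective after reducing to $\cE^\varphi\otimes\cR$ of rank one. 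This vanishing is exactly where $(!g)$ enters; you list it as a ``further difficulty'' but give no argument.

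Second, the other isomorphism does not follow ``symmetrically with $\rmod$ and $\rrd$ exchanged''. By \eqref{eq:Dg}, $\pDR^\rrd_{\wt X(f)}V$ is the pushforward of the complex with rapid decay along $D_f$ and moderate growth along $D'_g$. By the same vanishing, $\pDR^\rmod_{\wt X(f)}V^\vee(!g)$ is the pushforward of the complex with moderate growth along $D_f$ and rapid decay along $D'_g$. So what you need is $\bD\pDR^{\rrd D_f,\rmod D'_g}_{\wt Y(D)}V'\simeq\pDR^{\rmod D_f,\rrd D'_g}_{\wt Y(D)}V'^\vee$, which is the off-diagonal vertices of Mochizuki's square \eqref{eq:C1C1perp}. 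Exchanging $\rmod$ and $\rrd$ along all of $D$ only gives $\bD\pDR^\rrd_{\wt X(f)}(V(!g))\simeq\pDR^\rmod_{\wt X(f)}V^\vee$, and that is just the first isomorphism for $V^\vee$, dualized. The claimed compatibility with the rapid-decay-to-moderate morphisms likewise requires the whole square of isomorphisms, together with the fact that Proposition~\ref{prop:C6} is an isomorphism of squares. Finally, your pairing yields the standard duality only on $Y\moins D$. Compatibility on $X^*$ also involves points of $g^{-1}(0)\moins f^{-1}(0)$, where the natural isomorphism is $\cD$-module duality for the $!$-extension. The paper checks this using \cite[Prop.\,5.2.1]{Mochizuki10} applied to the real blow-up of $D_2$ over $Y\moins D_1$.
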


\subsection{The case of normal crossing divisors and good meromorphic flat bundle}

Let $(Y,D)$ be a complex manifold with a normal crossing divisor $D$ with a partition of its components in two disjoint sets, giving rise to the decomposition $D=D_1\cup D_2$. Let $V$ be a meromorphic flat bundle with poles along $D$ and let~$V^\vee$ be the dual meromorphic flat bundle. The dual localization $V(!D)$ is defined as $\bD\bigl(V^\vee\bigr)$. We~take up the notation of \cite[\S4.1.4]{Mochizuki10} where ${<}D$ means rapid decay along~$D$ and~${\leq} D$ means of Nilsson class along $D$. We recall that, for a good meromorphic flat bundle $V$ with poles along $D$, the natural morphism from $\DR^{\leq D}V\to\DR^\rmod V$ is a quasi-isomorphism (\cf Remark~\ref{rem:Nilssonmod}). The following de~Rham complexes on~$\wt Y(D)$
\[
\DR_{\wt Y(D)}^{\leq D}(V),\quad\DR_{\wt Y(D)}^{<D_1,\leq D_2}(V),\quad\DR_{\wt Y(D)}^{<D_2,\leq D_1}(V),\quad\DR_{\wt Y(D)}^{<D}(V)
\]
enter a natural commutative diagram
\begin{equation}\label{eq:diagD}
\begin{array}{c}
\xymatrix{
\DR_{\wt Y(D)}^{<D}(M)\ar[r]\ar[d]&\DR_{\wt Y(D)}^{<D_2,\leq D_1}(M)\ar[d]\\
\DR_{\wt Y(D)}^{<D_1,\leq D_2}(M)\ar[r]&\DR_{\wt Y(D)}^{\leq D}(M)
}
\end{array}
\end{equation}

The dual diagram $\bD\eqref{eq:diagD}$ is obtained by taking the Verdier dual at each vertex and considering the dual arrows, which then point toward the opposite direction. In~order to obtain a diagram similar to \eqref{eq:diagD}, it~is thus necessary to consider first a diagram $\eqref{eq:diagD}^\perp$ obtained from \eqref{eq:diagD} by reflection through to its center. Then $\bD\eqref{eq:diagD}^\perp$ is
\begin{equation}
\tag{$\bD\eqref{eq:diagD}^\perp$}\label{eq:diagDD}
\begin{array}{c}
\xymatrix{
\bD\DR_{\wt Y(D)}^{\leq D}(V)\ar[r]\ar[d]&\bD\DR_{\wt Y(D)}^{<D_1,\leq D_2}(V)\ar[d]\\
\bD\DR_{\wt Y(D)}^{<D_2,\leq D_1}(V)\ar[r]&\bD\DR_{\wt Y(D)}^{<D}(V)
}
\end{array}
\end{equation}
There is a natural morphism of squares
\begin{equation}\label{eq:C1C1perp}
\eqref{eq:diagD}(V^\vee)\to\eqref{eq:diagDD}(V),
\end{equation}
meaning that there is a natural morphism between the corresponding vertices (in~$\catD^\rb(\CC_{\wt Y(D)})$) and these morphisms are compatible with the arrows in the squares.

\begin{theoreme}[{\cite[Th.\,5.2.2]{Mochizuki10}}]
If $V$ is a good meromorphic flat bundle along $(Y,D)$, the morphism \eqref{eq:C1C1perp} is an isomorphism.
\end{theoreme}

\subsection{De Rham complexes on $\wt Y(D_1)$}\label{subsec:Cb}
We now consider the real blow-up $\varpi_1:\wt Y(D_1)\to Y$ and the commutative diagram of morphisms
\[
\xymatrix{
\wt Y(D)\ar[rr]^{\wt\varpi}\ar[rd]_\varpi&&\wt Y(D_1)\ar[ld]^{\varpi_1}\\
&Y&
}
\]
We also consider the following de~Rham complexes on $\wt Y(D_1)$:
\[
\DR_{\wt Y(D_1)}^{\leq D_1}(V),\quad\DR_{\wt Y(D_1)}^{<D_1}(V),\quad\DR_{\wt Y(D_1)}^{\leq D_1}(V(!D_2)),\quad\DR_{\wt Y(D_1)}^{<D_1}(V(!D_2)).
\]
There is a natural commutative diagram
\begin{equation}\label{eq:diagD1}
\begin{array}{c}
\xymatrix{
\DR_{\wt Y(D_1)}^{<D_1}(V(!D_2))\ar[r]\ar[d]&\DR_{\wt Y(D_1)}^{\leq D_1}(V(!D_2))\ar[d]\\
\DR_{\wt Y(D_1)}^{<D_1}(V)\ar[r]&\DR_{\wt Y(D_1)}^{\leq D_1}(V)
}
\end{array}
\end{equation}

\begin{proposition}\label{prop:C6}
If $V$ is a good meromorphic flat bundle along $(Y,D)$, there is a functorial isomorphism (in the derived category) $\eqref{eq:diagD1}\to\bR\wt\varpi_*\eqref{eq:diagD}$ (\ie a morphism between the corresponding vertices which is compatible with the arrows).
\end{proposition}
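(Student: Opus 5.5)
We have to show that $\bR\wt\varpi_*$ carries the square \eqref{eq:diagD} on $\wt Y(D)$ to the square \eqref{eq:diagD1} on $\wt Y(D_1)$. The plan is to compare the four vertices one at a time, through morphisms induced by pushing forward the coefficient sheaves of functions, and then check compatibility with the arrows.

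\textbf{Step 1: constructing the comparison morphisms.} For each of the four pairs of growth conditions, we compare the coefficient sheaves on $\wt Y(D)$ and on $\wt Y(D_1)$.
\begin{itemize}
\item The condition along $D_1$ (Nilsson ${\leq}D_1$ or rapid decay ${<}D_1$) is the same on both sides.
\item Along $D_2$, the Nilsson condition ${\leq}D_2$ on $\wt Y(D)$ corresponds to the localization $V=V(*D_2)$ on $\wt Y(D_1)$.
\item Along $D_2$, the rapid-decay condition ${<}D_2$ corresponds to the dual localization $V(!D_2)$.
\end{itemize}
Concretely, we have natural morphisms
\[
\cA^{\leq D_1}_{\wt Y(D_1)}\otimes\varpi_1^{-1}\cO_Y(*D_2)\to\wt\varpi_*\cA^{\leq D}_{\wt Y(D)}.
\]
The de~Rham complexes are built functorially from these coefficients, so the morphisms are automatically compatible with the natural arrows of the squares (rapid decay to Nilsson, and $V(!D_2)\to V$).

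For the $V(!D_2)$ vertices, we would first represent $V(!D_2)$ locally by a complex: either a resolution, or the cone description $V(!D_2)=\bD(V^\vee)$. We then use the identification of $\DR^{<D_2}$ with the de~Rham complex of $V(!D_2)$ to produce the morphism.

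\textbf{Step 2: proving the morphisms are isomorphisms.} The question is local on $\wt Y(D_1)$.
\begin{itemize}
\item Using a ramification $\rho_{\bmd}$ (as in the proof of Proposition~\ref{prop:XsurS}), we may assume that $V$ has a good formal decomposition.
\item By the generalized Hukuhara--Turrittin theorem, we reduce to $V=\cE^\varphi\otimes\cR$ with $\varphi$ purely monomial and $\cR$ of rank one with regular singularity.
\item Theorem~\ref{th:drmod} (together with Remark~\ref{rem:Nilssonmod}) shows that the complexes on $\wt Y(D)$ are concentrated in degree zero, and their $\cH^0$ are explicit $\RR$-constructible subsheaves of a local system.
\item The fibers of $\wt\varpi$ over points of $\partial\wt Y(D_1)$ lying above $D_2$ are tori $(S^1)^m$. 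So $\bR\wt\varpi_*$ reduces to computing cohomology of such subsheaves on these tori, and the target side is computed by Majima-type vanishing results.
\end{itemize}
The $\leq D$ vertex is essentially \cite[Lem.\,5.1.6]{Mochizuki10}, which was already used in \eqref{eq:Dg}. The ${<}D_1,{\leq}D_2$ vertex is handled the same way.

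\textbf{Main obstacle.} The hard part is the two vertices with rapid decay along $D_2$, where the target is $V(!D_2)$ rather than a localization. There I would dualize. Apply the analogous identification to $V^\vee$ with the roles of ${<}$ and ${\leq}$ along $D_2$ exchanged, and use Mochizuki's duality theorem (Th.\,5.2.2) on $\wt Y(D)$ together with the isomorphism \eqref{eq:C1C1perp}. Since $\wt\varpi$ is proper, $\bR\wt\varpi_*$ commutes with $\bD$, so these vertices follow from the already-settled ${\leq}D_2$ vertices for $V^\vee$.

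It then remains to check that the duality-induced isomorphism agrees with the natural morphism built in Step~1. I would check this on $\cH^0$ in the elementary model $\cE^\varphi\otimes\cR$, where everything is explicit.

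Finally, functoriality follows because all maps in Step~1 are induced by morphisms of coefficient sheaves.
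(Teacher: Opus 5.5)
\textbf{There is a genuine gap: your treatment of the vertices with rapid decay along $D_2$ is circular.} You rightly say these two vertices are the hard part. Push forward Mochizuki's isomorphism \eqref{eq:C1C1perp} (applied to $V^\vee$) and use your settled vertices for $V^\vee$. This only yields
\[
\bR\wt\varpi_*\DR^{<D_2,\leq D_1}_{\wt Y(D)}(V)\simeq\bD\,\bR\wt\varpi_*\DR^{<D_1,\leq D_2}_{\wt Y(D)}(V^\vee)\simeq\bD\,\DR^{<D_1}_{\wt Y(D_1)}(V^\vee),
\]
and similarly $\bR\wt\varpi_*\DR^{<D}_{\wt Y(D)}(V)\simeq\bD\,\DR^{\leq D_1}_{\wt Y(D_1)}(V^\vee)$. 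To reach $\DR^{\leq D_1}_{\wt Y(D_1)}(V(!D_2))$, resp.\ $\DR^{<D_1}_{\wt Y(D_1)}(V(!D_2))$, you still need a duality statement \emph{on $\wt Y(D_1)$} for $V(!D_2)$, for instance $\bD\,\DR^{<D_1}_{\wt Y(D_1)}(V^\vee)\simeq\DR^{\leq D_1}_{\wt Y(D_1)}(V(!D_2))$. That is exactly Corollary~\ref{cor:vanishing}, which the paper deduces \emph{from} the present proposition. Th.\,5.2.2 of \cite{Mochizuki10} only concerns complexes on $\wt Y(D)$.

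Step~1 has the same problem. You build the comparison morphism at these vertices from ``the identification of $\DR^{<D_2}$ with the de~Rham complex of $V(!D_2)$'', which is the conclusion. So no natural morphism exists whose agreement you could check on $\cH^0$.

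Step~2 also does not work as stated. The Hukuhara--Turrittin splitting into $\cE^\varphi\otimes\cR$ exists only on small multisectors of $\wt Y(D)$. But $\bR\wt\varpi_*$ involves entire torus fibres over $D_2$, where Stokes phenomena prevent such a splitting. Moreover, on $\wt Y(D_1)$ nothing is blown up along $D_2$. The target complexes are therefore not concentrated in one degree, and no Majima-type vanishing applies.

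\textbf{The missing idea is to push forward the coefficient sheaves themselves} (Lemma~\ref{lem:identrho}). Besides $\bR\wt\varpi_*\cA^{\leq D}_{\wt Y(D)}\simeq\cA^{\leq D_1}_{\wt Y(D_1)}(*D_2)$ and its ${<}D_1$ analogue, rapid decay along $D_2$ pushes forward to the two-term complex $\{\cA^{\leq D_1}_{\wt Y(D_1)}\to\cA^{\leq D_1}_{\wt Y(D_1)|\wh D_2}\}$, resp.\ its ${<}D_1$ version. This mirrors how rapid-decay functions push forward to the kernel of formal completion.

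Flatness and the projection formula then give, for any holonomic $M$, a functorial square \eqref{eq:diagD1prime} isomorphic to $\bR\wt\varpi_*\eqref{eq:diagD}$. No reduction to elementary models is needed. Take $M=V(!D_2)$; this changes nothing on $\wt Y(D)$, since the coefficient sheaves there are $\cO_Y(*D)$-modules. Projection onto the first term of the two-term complexes gives a natural morphism $\eqref{eq:diagD1prime}\to\eqref{eq:diagD1}$ that is the identity on the bottom row.

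This morphism is an isomorphism because the formal de~Rham complexes $\DR^{\leq D_1}_{\wt Y(D_1)|\wh D_2}(V(!D_2))$ and $\DR^{<D_1}_{\wt Y(D_1)|\wh D_2}(V(!D_2))$ vanish (Lemma~\ref{lem:vanishing}). That is where goodness enters, and legitimately. After a Mayer--Vietoris reduction to intersections of components of $D_2$, one works formally along them, where the good formal decomposition is an honest decomposition. This reduces the question to rank one, where some $\partial_{x_i}$ is bijective.
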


\begin{proof}

\begin{lemme}\label{lem:identrho}
We have the following identifications:
\begin{align*}
\bR\wt\varpi_*\cA_{\wt Y(D)}^{\leq D}&\simeq\cA_{\wt Y(D_1)}^{\leq D_1}(*D_2),\\
\bR\wt\varpi_*\cA_{\wt Y(D)}^{<D_1,\leq D_2}&\simeq\cA_{\wt Y(D_1)}^{<D_1}(*D_2),\\
\bR\wt\varpi_*\cA_{\wt Y(D)}^{<D_2,\leq D_1}&\simeq\bigl\{\cA_{\wt Y(D_1)}^{\leq D_1}\to\cA_{\wt Y(D_1)|\wh D_2}^{\leq D_1}\bigr\},\\
\bR\wt\varpi_*\cA_{\wt Y(D)}^{<D}&\simeq\bigl\{\cA_{\wt Y(D_1)}^{< D_1}\to\cA_{\wt Y(D_1)|\wh D_2}^{< D_1}\bigr\},
\end{align*}
where, denoting by $f_2$ a local defining function of $D_2$, we~have set
\[
\cA_{\wt Y(D_1)|\wh D_2}^{\leq D_1}:=\varprojlim_k\cA_{\wt Y(D_1)}^{\leq D_1}\big/f_2^k\cA_{\wt Y(D_1)}^{\leq D_1},\quad\cA_{\wt Y(D_1)|\wh D_2}^{< D_1}:=\varprojlim_k\cA_{\wt Y(D_1)}^{< D_1}\big/f_2^k\cA_{\wt Y(D_1)}^{< D_1}.
\]
\end{lemme}

\begin{proof}
The first two identifications follow from \cite[Th.\,4.3.2 \& Th.\,4.3.1]{Mochizuki10} and the last two from the same references together with \cite[Lem.\,II.1.1.18]{Bibi97}.
\end{proof}

We denote by $\square$ one of the symbols ${<}D,\dots,{\leq} D$ appearing in the diagram \eqref{eq:diagD}. We~know from \cite[Th.\,4.3.1]{Mochizuki10} that $\cA_{\wt Y(D)}^\ssquare$ is $\varpi^{-1}\cO_Y$-flat. For any holonomic $\cD_Y$-module $M$, we~have an isomorphism
\begin{multline*}
\bigl(\bR\wt\varpi_*\cA_{\wt Y(D)}^\ssquare\otimes^{\bL}_{\varpi_1^{-1}\cO_Y}\varpi_1^{-1}M\bigr)\otimes_{\varpi_1^{-1}\cD_Y}\varpi_1^{-1}\Sp_Y\\
\isom
\bR\wt\varpi_*\bigl(\cA_{\wt Y(D)}^\ssquare\otimes^{\bL}_{\varpi^{-1}\cO_Y}\varpi^{-1}M\bigr)\otimes_{\varpi_1^{-1}\cD_Y}\varpi_1^{-1}\Sp_Y\\
=
\bR\wt\varpi_*\bigl(\cA_{\wt Y(D)}^\ssquare\otimes_{\varpi^{-1}\cO_Y}\varpi^{-1}M\bigr)\otimes_{\varpi_1^{-1}\cD_Y}\varpi_1^{-1}\Sp_Y\\
\isom
\bR\wt\varpi_*\Bigl[\bigl(\cA_{\wt Y(D)}^\ssquare\otimes_{\varpi^{-1}\cO_Y}\varpi^{-1}M\bigr)\otimes_{\varpi^{-1}\cD_Y}\varpi^{-1}\Sp_Y\Bigr].
\end{multline*}

We denote by $\cA_{\wt Y(D_1)}^\ssquare$ the object defined by the right-hand sides in Lemma~\ref{lem:identrho}. From the first two lines of Lemma~\ref{lem:identrho} together with flatness properties (that also hold for $\cA_{\wt Y(D_1)|\wh D_2}^{\leq D_1}$ and $\cA_{\wt Y(D_1)|\wh D_2}^{< D_1}$ according to \loccit), the first line above reads
\[
\bigl(\cA_{\wt Y(D_1)}^\ssquare\otimes_{\varpi_1^{-1}\cO_Y}\varpi_1^{-1}M\bigr)\otimes_{\varpi_1^{-1}\cD_Y}\varpi_1^{-1}\Sp_Y.
\]
Let us consider the square
\begin{equation}\tag{\ref{eq:diagD1}$'$}\label{eq:diagD1prime}
\begin{array}{c}
\xymatrix{
\DR_{\wt Y(D_1)}^{<D}(M)\ar[r]\ar[d]&\DR_{\wt Y(D_1)}^{<D_2,\leq D_1}(M)\ar[d]\\
\DR_{\wt Y(D_1)}^{<D_1}(M(*D_2))\ar[r]&\DR_{\wt Y(D_1)}^{\leq D_1}(M(*D_2))
}
\end{array}
\end{equation}
Then for any holonomic $\cD_Y$-module $M$, we~have a functorial isomorphism $\eqref{eq:diagD1prime} \isom\bR\wt\varpi_*\eqref{eq:diagD}$.

Let us now assume that $M=M(!D_2)$. Then there is a natural morphism $\eqref{eq:diagD1prime}\to\eqref{eq:diagD1}$ which is the identity on the lower lines. It~is induced by the morphism of complexes (for $\square={}<\!D_1,\leq\!D_1$)
\[
\{\cA_{\wt Y(D_1)}^\ssquare\otimes \varpi_1^{-1}M\ra\cA_{\wt Y(D_1)|\wh D_2}^\ssquare\otimes \varpi_1^{-1}M\bigr\}\to\cA_{\wt Y(D_1)}^\ssquare\otimes \varpi_1^{-1}M.
\]
The proposition then follows from Lemma~\ref{lem:vanishing} below.
\end{proof}

\begin{lemme}[\cite{Mochizuki10}]\label{lem:vanishing}
Assume that $V$ is a good meromorphic flat bundle along $(Y,D)$. Then, for $\square={}<\!D_1,\leq\!D_1$, we~have $\DR^\ssquare_{\wt Y(D_1)|\wh D_2}(V(!D_2))=0$.
\end{lemme}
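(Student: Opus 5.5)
The statement is local on $\wt Y(D_1)$, and I would prove it by reducing to elementary rank-one models and then checking that a single differential operator acts bijectively on coefficient sheaves that are formal power series along $D_2$.

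\emph{Step 1: identify the complex.} I read $\DR^\ssquare_{\wt Y(D_1)|\wh D_2}(V(!D_2))$ as the de~Rham complex of $\cA^\ssquare_{\wt Y(D_1)|\wh D_2}\otimes_{\varpi_1^{-1}\cO_Y}\varpi_1^{-1}V(!D_2)$. By the flatness of $\cA^\ssquare_{\wt Y(D_1)|\wh D_2}$ recalled in the proof of Proposition~\ref{prop:C6}, this is exact in $V(!D_2)$. The sheaves $\cA^\ssquare_{\wt Y(D_1)}$ are, in a polydisc chart, of the following form: functions with growth condition $\ssquare$ in the variables $x=(x_1,\dots,x_k)$ defining $D_1$, holomorphic in the other variables. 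Taking the projective limit then makes $\cA^\ssquare_{\wt Y(D_1)|\wh D_2}$ the sheaf of formal power series in the variables $y=(y_1,\dots,y_m)$ defining $D_2$, with coefficients of that type. Because the question is local and the functor is exact, I can work on a small multisector in the $x$-directions times a polydisc.

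\emph{Step 2: reduce to rank one.} I would use the ramification argument already employed in the proof of Proposition~\ref{prop:XsurS}: $V$ is a direct summand of $\Dm\rho_{\bmd*}\Dm\rho_{\bmd}^*V$, and the construction is compatible with this pushforward. Together with the generalized Hukuhara--Turrittin theorem \cite[Th.\,12.5]{Bibi10}, this lets me assume $V=\cE^\varphi\otimes\cR$ with $\varphi$ purely monomial and $\cR$ regular of rank one. An induction on the rank, using exactness, is also needed, because Hukuhara--Turrittin only gives a filtration with such graded pieces. For the $!$-extension along $D_2$, which is a product of smooth components, I would write $V(!D_2)$ as an iterated $!$-localization, one component $y_j$ at a time. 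This reduces the claim to showing that one operator, in one variable $y=y_j$, is bijective on the relevant coefficient sheaf; the de~Rham complex is then acyclic by a Koszul/Künneth argument in the $\partial_{y_j}$-direction.

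\emph{Step 3: the one-variable check.} There are three cases.
\begin{enumerate}
\item If $\varphi$ has a pole along $y$, the relevant operator is $y\partial_y+y\partial_y\varphi+\alpha$. Its leading term is multiplication by a unit times $x^{-n}y^{-m}$, with $m>0$. I would solve the equation recursively in powers of $y$. Each step divides by $u\,x^{-n}$, that is, multiplies by $x^n u^{-1}$. This preserves both the rapid-decay condition and the Nilsson/moderate condition in $x$, so the operator is bijective.
\item If $\varphi$ has no pole along $y$ and the exponent $\alpha$ of $\cR$ along $y$ is not an integer, then $V(!D_2)=V(*D_2)$. On $y^\nu$ the operator $y\partial_y+\alpha+y\,\partial_y\varphi$ acts as $\nu+\alpha$ plus terms of higher order in $y$, so it is invertible on Laurent series with $x$-coefficients.
\item If $\alpha\in\ZZ$, then after twisting $V(!D_2)$ is locally $\cD/\cD\,\partial_y y$ tensored with the rest. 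On formal series in $y$, $\partial_y y$ acts by multiplication by $\nu+1\neq0$ on $y^\nu$, again up to higher-order terms coming from $\partial_y\varphi$, which is holomorphic in $y$. So it is bijective.
\end{enumerate}
In each case the relevant $\partial_y$-complex is acyclic, and hence so is the whole de~Rham complex.

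The main obstacle I expect is the first case, when $\varphi$ depends on both $x$ and $y$: one has to check that the formal recursion in $y$ really stays within the $\ssquare$-growth class in $x$, uniformly on the multisector. My plan is to use pure monomiality (the goodness of $V$) to write the leading coefficient as $u\,x^{-n}$ with $u$ a unit. Then only multiplication by the monomial $x^n$ and by units occurs at each step, and these are stable operations on $\cA^{<D_1}$ and $\cA^{\leq D_1}$. A secondary point to verify is that the iterated $!$-localization along several components of $D_2$ commutes with the completion $\varprojlim_k$ in Step 1.
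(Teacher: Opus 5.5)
Your reduction is the paper's: ramify, use the decomposition given by goodness, induct on the rank down to a rank-one $\cE^\varphi\otimes\cR$, then prove that a single partial connection operator is bijective by a recursion in a variable along which the coefficients are formal. When $D_2$ is smooth, your argument is essentially the paper's.

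The gap is in Step~1 when $D_2=\{y_1\cdots y_m=0\}$ with $m\geq2$. The sheaf $\cA^\ssquare_{\wt Y(D_1)|\wh D_2}=\varprojlim_k\cA^\ssquare_{\wt Y(D_1)}/f_2^k\cA^\ssquare_{\wt Y(D_1)}$, with $f_2=y_1\cdots y_m$, is the completion along the \emph{union} $D_2$. It is not the completion along $\{y_1=\cdots=y_m=0\}$, so it is not a sheaf of formal power series in all the $y_j$. By Mayer--Vietoris it is glued from the completions along the $D_I=\bigcap_{i\in I}D_i$. The completion along $D_I$ is formal only in the $y_i$ with $i\in I$, and convergent in the other $y_j$.

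Consequently no single direction works on the whole sheaf. Take $m=2$ and $V=(\cO_Y(*D),\rd+\rd(1/y_1))$.
\begin{itemize}
\item Your case~(1) picks $y_1$. Near points of $\{y_2=0,\ y_1\neq0\}$ the coefficients are convergent in $y_1$, and $\nabla_{\partial_{y_1}}$ has nonzero horizontal sections $e^{-1/y_1}c$ with $c$ independent of $y_1$.
\item At the origin, $\nabla_{\partial_{y_1}}$ is not surjective either. Its cokernel is that of $y_1^2\partial_{y_1}-1$ on the Mayer--Vietoris piece completed along $\{y_2=0\}$, which is convergent in $y_1$. There this operator has index $-1$, and the unique formal solution produced by your Neumann series diverges.
\item Choosing $y_2$ fails near points of $\{y_1=0,\ y_2\neq0\}$. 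There $V(!D_2)=V$, the coefficients are convergent in $y_2$, and $\partial_{y_2}$ has a kernel.
\end{itemize}
The ``secondary point'' you flag (iterated $!$-localization versus $\varprojlim_k$) is not where the difficulty lies.

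The missing step is the paper's. By the Mayer--Vietoris argument of \cite[Lem.\,II.1.1.13]{Bibi97}, reduce to $\DR^\ssquare_{\wt Y(D_1)|\wh D_I}(V(!D_2))=0$ for every nonempty $I$. For each $I$, use a direction $y_i$ with $i\in I$, chosen according to $I$ and not according to the poles of $\varphi$. The coefficients are then formal in $y_i$, and your three cases, which exhaust the possibilities for a fixed direction, apply.

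There is also a slip in case~(3). The module $\cD/\cD\partial_yy$ is $\cO(*D)$ (it is generated by $y^{-1}$), whereas $\cO(!D)=\cD/\cD y\partial_y$. Checking bijectivity of $\partial_yy$ on the coefficients computes $\bR\cHom_\cD(\cO(*D),\cA)$. That is identified with the de~Rham complex of $\cA\otimes\cO(!D)$ (up to shift) only through duality.

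A direct check, with $\cA$ formal in $y$, uses the exact sequence $0\to\cA(*D)/\cA\to\cA\otimes\cO(!D)\to\cA\to0$. On the first term $\partial_y$ is injective, with cokernel the $y$-constant multiples of $y^{-1}$. On the last term it is surjective, with kernel the $y$-constants. The connecting map sends $1$ to a nonzero multiple of $[y^{-1}]$, so $\partial_y$ is bijective on the middle term.
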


\begin{proof}
This statement is proved in \cite[Prop.\,3.2.2]{Mochizuki10} when replacing $\wt Y(D_1)$ with~$Y$. We~adapt an earlier proof of the latter statement which appeared in a preliminary version of~\cite{Mochizuki10}. A similar proof is given in Lemmas 5.1.6 and 5.1.8 of \loccit

Let us set $D_2=\bigcup_{i\in J}D_i$ ($J\neq\emptyset$) and, for any nonempty subset $I\subset J$, let us set $D_I=\bigcap_{i\in I}D_i$. By~a Mayer-Vietoris argument (\cf \cite[Lem.\,II.1.1.13]{Bibi97}), it~is enough to prove the vanishing $\DR^\ssquare_{\wt Y(D_1)|\wh D_I}(V(!D_2))=0$. This is a local question, so~that we can use a local coordinate system $(x_1,\dots,x_n)$ adapted to $D$. It~is then enough to prove that, for some $i\in I$, the differential
\[
\partial_{x_i}:\cA_{\wt Y(D_1)|\wh D_I}^\ssquare\otimes_{\varpi_1^{-1}\cO_Y}\varpi_1^{-1}(V(!D_2))\to\cA_{\wt Y(D_1)|\wh D_I}^\ssquare\otimes_{\varpi_1^{-1}\cO_Y}\varpi_1^{-1}(V(!D_2))
\]
is bijective. In~the following, we~fix such an $i$. We~have an identification
\[
\cA_{\wt Y(D_1)|\wh D_I}^\ssquare\otimes_{\varpi_1^{-1}\cO_Y}\varpi_1^{-1}(V(!D_2))=\cA_{\wt Y(D_1)|\wh D_I}^\ssquare\otimes_{\varpi_1^{-1}\cO_{Y|\wh D_I}}\varpi_1^{-1}(\cO_{Y|\wh D_I}\otimes_{\varpi_1^{-1}\cO_Y}V(!D_2)),
\]
and by goodness, $\cO_{Y|\wh D_I}\otimes_{\varpi_1^{-1}\cO_Y}V$ locally decomposes, possibly after a finite ramification around $D$, into a direct sum of terms obtained from regular holonomic $\cD_{Y|\wh D_I}(*D_2)$-modules $R$ by twisting their connection by $\rd\varphi$ for some good local section~$\varphi$ of $\cO_Y(*D)/\cO_Y$ (\ie $\varphi$ is a the product of a monomial in $x_1,\dots,x_n$ with negative exponents by a unit in $\cO_Y$). Since $R$ is a successive extension of rank-one objects, it~is enough to assume that $R$ has rank one by an easy induction. We~can thus assume that $V=(\cO_Y(*D),\rd+\rd\varphi+\omega$, where $\omega$ is a logarithmic $1$-form with constant coefficients. Then the computation is standard (such computations are done in \cite[Proof of Lem.\,5.1.6]{Mochizuki10}).
\end{proof}

\begin{corollaire}\label{cor:vanishing}
There exists an isomorphism of squares from\enlargethispage{\baselineskip}
\[
\xymatrix@R=.5cm{
\DR_{\wt Y(D_1)}^{<D_1}(V^\vee(!D_2))\ar[r]\ar[d]&\DR_{\wt Y(D_1)}^{\leq D_1}(V^\vee(!D_2))\ar[d]\\
\DR_{\wt Y(D_1)}^{<D_1}(V^\vee)\ar[r]&\DR_{\wt Y(D_1)}^{\leq D_1}(V^\vee)
}
\]
to
\[
\xymatrix@R=.5cm{
\bD\DR_{\wt Y(D_1)}^{\leq D_1}(V)\ar[r]\ar[d]&\bD\DR_{\wt Y(D_1)}^{<D_1}(V)\ar[d]\\
\bD\DR_{\wt Y(D_1)}^{\leq D_1}(V(!D_2))\ar[r]&\bD\DR_{\wt Y(D_1)}^{<D_1}(V(!D_2))
}
\]
which extends the natural isomorphisms existing on $Y^*:=Y\moins D_1$.
\end{corollaire}

\begin{proof}
We first apply $\bR\wt\varpi_*$ to the isomorphism \eqref{eq:C1C1perp}. By~applying the isomorphism of Proposition~\ref{prop:C6} together with the commutation isomorphism $\bR\wt\varpi_*\bD\simeq\bD\bR\wt\varpi_*$, we~obtain the desired isomorphism. We now check compatibility with the natural isomorphisms when restricted to $Y^*$. We~note that $\wt\varpi:\wt Y^*(D_2)\to Y^*$ is the real blow-up of the components of $D_2$. We~apply \cite[Prop.\,5.2.1]{Mochizuki10} to this real blow-up (denoted by $\pi$ in \loccit) and with $D=D_2$.
\end{proof}

\subsection{End of the proof of Proposition~\ref{prop:duality}}
The question is local on $X$. Let \hbox{$\pi:Y\to X$} be a projective modification with $Y$ smooth such that $fg\circ\pi$ defines a divisor with normal crossings $D$. We~denote by $D_1$ the divisor defined by $f\circ\pi$ and by $D_2$ the union of the remaining components of $D$. There exists a unique lift $\wt\pi:\wt Y(D_1)\to\wt X(f)$ of $\pi$. Let $V$ be a meromorphic flat bundle on $X$ with poles along $(fg)^{-1}(0)$ and let $V_Y$ denote its pullback on $Y$, which has poles along~$D$. We~have $\Dm\pi_*V_Y=V$ (this is seen for example by computing the pushforward by means of induced $\cD$-modules). Since duality commutes with pushforward, we~deduce that $\Dm\pi_*(V_Y(!g\circ\pi))\simeq V(!g)$. On the other hand, one checks that \hbox{$V_Y(!g\circ\pi)(*D_1)=V_Y(!D_2)$}, so~that, when considering the de~Rham complexes of Corollary~\ref{cor:vanishing} one can replace $V_Y(!g\circ\pi)$ with $V_Y(!D_2)$. Finally, applying $\bR\varpi_*$ to the isomorphism of Corollary~\ref{cor:vanishing} together with the isomorphism of Corollary~\ref{cor:473} (stated for $\wt Y(f\circ\pi)$, but which also applies to $\wt Y(D_1)$), we~obtain the isomorphisms of Proposition~\ref{prop:duality}.

Finally, the compatibility statement is a consequence of the one obtained in Corollary~\ref{cor:vanishing}.\qed

\backmatter
\bibliographystyle{smfalpha-cs}
\bibliography{smfjournalnames,sabbah}

\providecommand{\SortNoop}[1]{}\providecommand{\sortnoop}[1]{}\providecommand{\eprint}[1]{\href{http://arxiv.org/abs/#1}{\texttt{arXiv\string:\allowbreak#1}}}\providecommand{\hal}[1]{\href{https://hal.archives-ouvertes.fr/hal-#1}{\texttt{hal-#1}}}\providecommand{\tel}[1]{\href{https://hal.archives-ouvertes.fr/tel-#1}{\texttt{tel-#1}}}\providecommand{\doi}[1]{\href{http://dx.doi.org/#1}{\texttt{doi\string:\allowbreak#1}}}\providecommand{\didotfam}{}
\providecommand{\bysame}{\leavevmode ---\ }
\providecommand{\og}{``}
\providecommand{\fg}{''}
\providecommand{\smfandname}{\&}
\providecommand{\smfedsname}{\'eds.}
\providecommand{\smfedname}{\'ed.}
\providecommand{\smfmastersthesisname}{M\'emoire}
\providecommand{\smfphdthesisname}{Th\`ese}
\begin{thebibliography}{Meb04}

\bibitem[DK16]{D-K13}
{\scshape A.~D'Agnolo {\normalfont \smfandname} M.~Kashiwara} -- {\og
  Riemann-{H}ilbert correspondence for holonomic {D}-modules\fg}, \emph{Publ.
  Math. Inst. Hautes {\'E}tudes Sci.} \textbf{123} (2016), p.~69--197.

\bibitem[Del73]{Deligne73}
{\scshape P.~Deligne} -- {\og {Le formalisme des cycles {\'e}vanescents
  (expos{\'e}s 13 et 14)}\fg}, in \emph{{SGA~7\,II}}, Lect. Notes in Math.,
  vol. 340, Springer-Verlag, 1973, p.~82--173.

\bibitem[HH25]{H-H25}
{\scshape B.~Hepler {\normalfont \smfandname} A.~Hohl} -- {\og Moderate growth
  and rapid decay nearby cycles via enhanced ind-sheaves\fg}, \emph{Publ. RIMS,
  Kyoto Univ.} \textbf{61} (2025), no.~1, p.~1--51.

\bibitem[Hie07]{Hien07}
{\scshape M.~Hien} -- {\og {Periods for irregular singular connections on
  surfaces}\fg}, \emph{Math. Ann.} \textbf{337} (2007), p.~631--669.

\bibitem[Kas03]{Kashiwara03}
{\scshape M.~Kashiwara} -- \emph{{$D$}-modules and microlocal calculus},
  Translations of Mathematical Monographs, vol. 217, American Mathematical
  Society, Providence, RI, 2003.

\bibitem[KS90]{K-S90}
{\scshape M.~Kashiwara {\normalfont \smfandname} P.~Schapira} -- \emph{{Sheaves
  on Manifolds}}, Grundlehren Math. Wissen., vol. 292, Springer-Verlag, Berlin,
  Heidelberg, 1990.

\bibitem[Ked11]{Kedlaya10}
{\scshape K.~Kedlaya} -- {\og Good formal structures for flat meromorphic
  connections,~{II}: excellent schemes\fg}, \emph{J.~Amer. Math. Soc.}
  \textbf{24} (2011), no.~1, p.~183--229.

\bibitem[Maj84]{Majima84}
{\scshape H.~Majima} -- \emph{Asymptotic analysis for integrable connections
  with irregular singular points}, Lect. Notes in Math., vol. 1075,
  Springer-Verlag, 1984.

\bibitem[Mal85]{malgrange85}
{\scshape B.~Malgrange} -- {\og Sur les images directes de
  {$\mathcal{D}$}-modules\fg}, \emph{Manuscripta Math.} \textbf{50} (1985),
  p.~49--71.

\bibitem[Mal91]{Malgrange91}
\bysame , \emph{{\'E}quations diff{\'e}rentielles {\`a} coefficients
  polynomiaux}, Progress in Math., vol.~96, Birkh{\"a}user, Basel, Boston,
  1991.

\bibitem[Meb90]{Mebkhout90}
{\scshape Z.~Mebkhout} -- {\og Le th{\'e}orème de positivit{\'e} de
  l'irr{\'e}gularit{\'e} pour les {$\mathcal{D}_{X}$}-modules\fg}, in
  \emph{{The Grothendieck Festschrift}}, Progress in Math., vol. 88, no.~3,
  Birkh{\"a}user, Basel, Boston, 1990, p.~83--132.

\bibitem[Meb04]{Mebkhout04}
\bysame , {\og {Le th{\'e}orème de positivit{\'e}, le th{\'e}orème de
  comparaison et le th{\'e}orème d'existence de Riemann}\fg}, in
  \emph{{{\'E}l{\'e}ments de la th{\'e}orie des systèmes diff{\'e}rentiels
  g{\'e}om{\'e}triques}}, S{\'e}minaires \& Congrès, vol.~8, Soci{\'e}t{\'e}
  Math{\'e}matique de France, Paris, 2004, p.~165--310.

\bibitem[MS89]{M-S86b}
{\scshape Z.~Mebkhout {\normalfont \smfandname} C.~Sabbah} -- {\og {\S\kern
  .15em III.4 {$\mathcal{D}$}-modules et cycles {\'e}vanescents}\fg}, in
  \emph{{Le formalisme des six op{\'e}rations de Grothendieck pour les
  {$\mathcal{D}$}\nobreakdash-mo\-du\-les coh{\'e}rents}}, Travaux en cours,
  vol.~35, Hermann, Paris, 1989, p.~201--239.

\bibitem[Moc09]{Mochizuki09}
{\scshape T.~Mochizuki} -- {\og {On Deligne-Malgrange lattices, resolution of
  turning points and harmonic bundles}\fg}, \emph{Ann. Inst. Fourier
  (Grenoble)} \textbf{59} (2009), no.~7, p.~2819--2837.

\bibitem[Moc11]{Mochizuki08}
\bysame , \emph{{\SortNoop{1}Wild harmonic bundles and wild pure twistor
  $D$-modules}}, Ast{\'e}\-risque, vol. 340, Soci{\'e}t{\'e} Math{\'e}matique
  de France, Paris, 2011.

\bibitem[Moc14]{Mochizuki10}
\bysame , \emph{{Holonomic $\mathcal D$-modules with Betti structure}},
  M{\'e}m. Soc. Math. France (N.S.), vol. 138--139, Soci{\'e}t{\'e}
  Math{\'e}matique de France, Paris, 2014.

\bibitem[Sab93]{Bibi93}
{\scshape C.~Sabbah} -- {\og {{\'E}quations diff{\'e}rentielles {\`a} points
  singuliers irr{\'e}guliers en dimension~$2$}\fg}, \emph{Ann. Inst. Fourier
  (Grenoble)} \textbf{43} (1993), p.~1619--1688.

\bibitem[Sab00]{Bibi97}
\bysame , \emph{{{\'E}quations diff{\'e}rentielles {\`a} points singuliers
  irr{\'e}guliers et ph{\'e}\-no\-mène de Stokes en dimension~{$2$}}},
  Ast{\'e}\-risque, vol. 263, Soci{\'e}t{\'e} Math{\'e}matique de France,
  Paris, 2000.

\bibitem[Sab13]{Bibi10}
\bysame , \emph{{\SortNoop{1}Introduction to Stokes structures}}, Lect. Notes
  in Math., vol. 2060, Springer-Verlag, 2013.

\bibitem[Ser65]{Serre65}
{\scshape J.-P. Serre} -- \emph{Algèbre locale et multiplicit{\'e}s}, 3\ieme
  \smfedname, Lect. Notes in Math., vol.~11, Springer-Verlag, 1965.

\bibitem[Tey16]{Teyssier16b}
{\scshape J.-B. Teyssier} -- {\og A boundedness theorem for nearby slopes of
  holonomic {$\mathcal{D}$}-modules\fg}, \emph{Compositio Math.} \textbf{152}
  (2016), no.~10, p.~2050--2070.

\end{thebibliography}
\end{document}